\DeclarePairedDelimiter{\abs}{\lvert}{\rvert}
\DeclarePairedDelimiter{\norm}{\lVert}{\rVert}
\renewcommand{\a }{\alpha }
\newcommand{\rh }{\rho }
\newcommand{\intbar}{\mathop{\int\makebox(-13.5,0){\rule[4pt]{.7em}{0.3pt}}%
\kern-6pt}\nolimits}
\newcommand{\be}{\begin{equation}}
\newcommand{\ee}{\end{equation}}
\newcommand{\R}{\mathbb{R}}
\newcommand{\Rm}{\mathrm{Rm}}
\newcommand{\Ric}{\mathrm{Ric}}
\newcommand{\Sc}{\mathrm{Sc}}
\newcommand{\Exp}{\mathrm{Exp}}
\newcommand{\Tr}{\mathrm{Tr}}
\newcommand{\Ker}{\mathrm{Ker}}
\newcommand{\Hess}{\mathrm{Hess}}
\newcommand{\cQ}{\mathcal{Q}}
\newcommand{\cL}{\mathcal{L}}
\newcommand{\cO}{\mathcal{O}}
\newcommand{\N}{\mathbb{N}}
\newcommand{\rP}{\mathrm{P}}
\newcommand{\rA}{\mathrm{A}}
\newcommand{\sfd}{\mathsf{d}}
\newcommand{\ADM}{{\rm ADM}}
\newtheorem{theorem}{Theorem}[section]
\newtheorem{lemma}[theorem]{Lemma}
\newtheorem{prop}[theorem]{Proposition}
\newtheorem{remark}[theorem]{Remark}
\newtheorem{definition}[theorem]{Definition}
\newcommand{\KN}{\mathbin{\bigcirc\mspace{-15mu}\wedge\mspace{3mu}}}
\begin{document}

\author{ Andrea Mondino\thanks{University of Oxford. Email address: Andrea.Mondino@maths.ox.ac.uk} \;  and\; Aidan Templeton-Browne\thanks{University of Warwick. Email address: A.Browne.2@warwick.ac.uk}}

\date{}

\title{Some rigidity results for the Hawking mass and a lower bound for the Bartnik capacity}

\maketitle

\

\

\begin{center}
\noindent {\sc abstract}. 
We prove rigidity results involving the Hawking mass for surfaces immersed in a $3$-dimensional, complete Riemannian manifold $(M,g)$ with non-negative scalar curvature (resp. with scalar curvature bounded below by $-6$). Roughly, the main result states that if an open subset $\Omega\subset M$ satisfies that every point  has a neighbourhood $U\subset \Omega$ such that the supremum of the Hawking mass of surfaces contained in $U$ is non-positive, then $\Omega$ is locally isometric to Euclidean $\R^3$ (resp. locally isometric to the Hyperbolic 3-space ${\mathbb H}^3$). Under mild asymptotic conditions on the manifold $(M,g)$ (which encompass as special cases the standard ``asymptotically flat'' or, respectively,  ``asymptotically hyperbolic'' assumptions) the previous quasi-local rigidity statement implies a \emph{global rigidity}: if every point  in $M$ has a neighbourhood $U$ such that the supremum of the Hawking mass of surfaces contained in $U$ is non-positive, then $(M,g)$ is globally isometric to Euclidean $\R^3$ (resp. globally isometric to the Hyperbolic 3-space ${\mathbb H}^3$). Also, if the space is not flat (resp. not of constant sectional curvature $-1$), the methods give a small yet explicit and strictly positive lower bound on the  Hawking mass of suitable spherical surfaces. We infer a small yet explicit and strictly positive lower bound on the Bartnik mass of open sets (non-locally isometric to Euclidean $\R^{3}$) in terms of curvature tensors. Inspired by these results, in the appendix we propose a notion of ``sup-Hawking mass'' which satisfies some natural properties of a  quasi-local mass.
\bigskip\bigskip

\noindent{\it Key Words:} 
Willmore functional,  Hawking mass, Bartnik mass, asymptotically flat manifold, outer-minimising.

\bigskip

\centerline{\bf AMS subject classification: }
53C20, 53C21, 53C42, 83C99.
\end{center}

\section{Introduction}\label{s:in}
The goal of this paper is to prove some rigidity results  involving the Hawking mass for surfaces immersed in an asymptotically flat (resp. asymptotically hyperbolic; and some suitable milder asymptotic assumptions), $3$-dimensional, complete Riemannian manifold $(M,g)$ with non-negative scalar curvature (resp. with scalar curvature bounded below by $-6$). Let us start by recalling some motivations for studying such a geometric setting, coming from Mathematical General Relativity.  

\subsection{Some background and motivation}

In the framework of Mathematical General Relativity, a Riemannian manifold $(M^{3}, g)$ with non-negative scalar curvature $\Sc_{g}\geq 0$ represents a ``time-symmetric space-like slice'' of a 4-dimensional space-time satisfying the so-called \emph{Dominant Energy Condition} (DEC for short).  The condition $\Sc_{g}\geq 0$ is indeed a consequence of the DEC, coupled with the property of being a ``time-symmetric space-like slice'' (i.e. $M$ has vanishing second fundamental form as a hypersurface in the ambient 4-dimensional space-time) satisfying the  Einstein Constraint Equations.

The relationship between the notion of mass in General Relativity and the geometry of a manifold has been studied extensively in recent times. Let us recall some basic notation and fundamental results.
First of all, let us mention that, unless otherwise specified, a manifold is allowed to have non-empty boundary.

\begin{definition}[Asymptotically flat Riemannian manifold with horizon boundary]\label{def:AFRiemMan}
 A complete Riemannian manifold $(M^{3},g)$ is said to be \emph{asymptotically flat} (AF for short) if  there exists a compact subset $\mathcal K\Subset M$ such that $M^{3}\setminus \mathcal K$ is diffeomorphic to $\R^{3}\setminus\overline {B_{1}(0)}$ via a map $\Psi$ which induces a system of coordinates at infinity, and in such coordinates the metric is written as 
\begin{equation}
g_{ij}=\delta_{ij}+\sigma_{ij} \quad \text{where} \quad |x|^{|\alpha|} |(\partial^{\alpha} \sigma_{ij}) (x)|=\cO(|x|^{-\tau}) \quad \text{as }|x|\to \infty,
\end{equation}
for some $\tau>1/2$ and all multi-indices $\alpha$, with $|\alpha|=0,1,2,3$.  We also require that the scalar curvature $\Sc$ of $(M,g)$ is integrable. Moreover, if the boundary of $M$ is non-empty, we assume that it is minimal and that there are no closed minimal surfaces intersecting the interior of $M$;  in this case we say that the manifold is \emph{asymptotically flat with horizon boundary}. 
\end{definition}

Notice that  in Definition \ref{def:AFRiemMan} and throughout, we only consider one-ended AF manifolds with horizon boundary. For an AF  Riemannian manifold $(M,g)$ with horizon boundary as above there is natural notion of  ``total mass'' of $(M,g)$,  well known under the name of  $\ADM$-mass (after Arnowitt-Deser-Misner \cite{ADM}) and defined by
\begin{equation}\label{def:ADMmass}
m_{\ADM}(M,g)=\lim_{\rho\to \infty} \frac{1}{16\pi \rho} \int_{|x|=\rho} \sum_{i,j=1}^{3} (\partial_{i} g_{ij}- \partial_{j} g_{ii}) x^{j}.
 \end{equation}
Bartnik \cite{Bartnik}  showed that $m_{\ADM}(M,g)$ as defined in \eqref{def:ADMmass} is finite and independent of the chart at infinity.

 The fundamental \emph{Positive Mass Theorem}, proven first by Schoen-Yau \cite{SchoenYau1} using minimal surfaces methods and then by Witten \cite{Witten} via spinorial techniques, states that if $(M^{3},g)$ is an AF Riemannian manifold with non-negative scalar curvature, then $m_{\ADM}(M,g)\geq 0$ and equality holds if and only if $(M^{3},g)$ is isometric to the Euclidean $\R^{3}$.
 \\Another landmark example of the interplay between mass and geometry is the \emph{Riemannian Penrose Inequality},  proved independently by Huisken-Ilmanen  \cite{HuiskenIlmanen} and Bray \cite{Bray}. The Riemannian Penrose Inequality is a strengthening of the lower bound on the ADM mass in case of horizon boundary of an AF manifold with non-negative scalar curvature; namely, $m_{\ADM}(M,g)\geq  \sqrt {\tfrac{|\partial M|}{16 \pi}}$.
\\

In this paper, we further investigate the relationship between geometry and mass. More precisely between the  geometric conditions of non-negative scalar curvature and asymptotic flatness  (and some generalisations of it) on the one hand, and  two notions of \emph{quasi-local mass}:  those of Hawking \cite{Hawking} and Bartnik \cite{Bartnik2}.  Let us recall that, while in Newtonian gravity it is possible to define the mass contained in a region $\Omega$  simply by integration of a ``mass density function'' over $\Omega$,  defining a corresponding concept in the setting of General Relativity is much more subtle. Indeed, due to the Equivalence Principle, there is no  pointwise notion of ``gravitational energy density'' (see for instance \cite[Section 20.4]{Thorne} or \cite{Penrose}).
\\Nevertheless there are several proposals for a notion of ``quasi-local mass'' contained in a region $\Omega$ (see for instance \cite[Chap. 6]{LeeBook} or \cite{Szaba}); we will focus on the  Hawking \cite{Hawking} and Bartnik \cite{Bartnik2} quasi-local masses.

\subsubsection*{The Hawking mass}

\begin{definition}[Hawking mass]\label{def:5}
Let $(M^3,g)$ be a Riemannian manifold with non-negative scalar curvature and let $\Sigma \hookrightarrow M$ be an immersed sphere. The Hawking mass of $\Sigma$ is defined as:
\begin{equation}\label{eq:defHaw}
    m_{H}(\Sigma) := \sqrt{\frac{|\Sigma |}{(16\pi)^3}} \;\Big(16\pi - W(\Sigma) \Big)
\end{equation}
where $|\Sigma|$ is the area,  $W(\Sigma) := \int_{\Sigma}H^2 dV_{\Sigma}$ is the Willmore functional and where we adopt the convention that the mean curvature $H$ is the \emph{sum} of the principal curvatures.
\end{definition}

Evidently, if $\Sigma$ is a minimal surface, then its Hawking mass is positive. It is also readily checked that the Hawking mass of a  round sphere in $\R^3$ is equal to zero, as its Willmore functional equals $16\pi$. A classical inequality due to Willmore \cite{Willmore} asserts that  the round sphere is the unique minimiser of the Willmore functional (up to scaling). Thus the Hawking mass of any surface in $\R^{3}$ is less or equal to zero, with equality if and only if the surface is a round sphere. This fact is already suggesting that it could be appropriate to consider the supremum of the Hawking mass, over a suitable class of surfaces (see the Appendix \ref{SS:supHawk} for an implementation of this idea for a notion of quasi-local mass). 
\\Nevertheless, Christodoulou and Yau \cite{Christodoulou} showed that the Hawking mass is non-negative for stable mean curvature spheres in $3$-manifolds with non-negative scalar curvature (see also the more recent work  \cite{MiaoWangXie} by Miao-Wang-Xie).
 The popularity of the Hawking mass is arguably due to the very powerful monotone property (Eardley, Geroch,
Jang-Wald \cite{JangWald}) along the Inverse Mean Curvature Flow, which was key  in Huisken-Ilmanen’s \cite{HuiskenIlmanen} proof of the Riemannian Penrose inequality for a single black hole (see Bray \cite{Bray} for the multiple black holes case and for a different proof). 
  
 \subsubsection*{The Bartnik mass} 
 First of all let us mention that, after Bartnik \cite{Bartnik2}  introduced the quasi-local mass named after him, several variants appeared in the literature. For convenience, here we adopt a version proposed in  \cite{Bray, HuiskenIlmanen}.  We refer to the recent \cite{McCormick, Jauregui}  for reconciliation of some of the various versions and for a discussion of the topic.

Let $(M^{3},g)$ be an AF Riemannian manifold (possibly with horizon boundary) with non-negative scalar curvature and let $\Omega\subset M$ be a bounded open set  with smooth topological boundary $\partial \Omega$. Recall that $\partial \Omega$ is said to be \emph{outer-minimising} if $ \rP(\Omega)  \leq \rP(\Omega')$ for any set $\Omega'\subset M$ of finite perimeter (denoted with $\rP(\Omega')$) and finite volume  such that $\Omega\subset \Omega'$. Define the \emph{Bartnik mass} $m_B(\Omega)$ of $\Omega$ (also known as Bartnik capacity of $\Omega$) as
\begin{equation}\label{eq:defmB}
    m_B(\Omega) := \inf \{m_{\ADM}(\Tilde{M}, \Tilde{g}) :  (\Tilde{M}, \Tilde{g})\in {\mathcal{A}} \}
\end{equation}
where ${\mathcal A}$ is the set of AF manifolds  (possibly with horizon boundary)  with non-negative scalar curvature  into which $\Omega$ isometrically embeds such that $\partial\Omega\subset \Tilde{M}$ is outer-minimising.
\\Notice that the Positive Mass Theorem (or the Riemannian Penrose inequality, in case all elements in  ${\mathcal{A}}$ have non-empty horizon boundary) immediately yields the non-negativity of the Bartnik mass. Another advantage of the Bartnik mass is that it satisfies a natural monotonicity property under inclusion 
\begin{equation}\label{eq:monmB}
    m_B(\Omega_1) \leq m_B(\Omega_2) \quad \forall \text{$\Omega_1 \Subset \Omega_2\subset M$, with $\partial\Omega_1$ outer-minimising in $\Omega_2$ and $\partial \Omega_{2}$ outer minimising in $M$.}
\end{equation}
Moreover, as a consequence of the proof of the Riemannian Penrose inequality via Inverse Mean Curvature Flow by Huisken-Ilmanen \cite{HuiskenIlmanen}, it holds that
\begin{equation}\label{eq:mADmgeqmH}
    m_{\ADM}(M,g) \geq m_H(\partial \Omega)
\end{equation}
for every $\partial \Omega\subset M$ outer-minimising in the AF manifold $(M,g)$ with non-negative scalar curvature. 
\\Since every smooth extension $(\tilde{M}, \tilde{g}) \in {\mathcal A}$ induces the same mean curvature on $\partial\Omega\subset \tilde{M}$, the inequality \eqref{eq:mADmgeqmH} combined with the very definition \eqref{eq:defmB}   implies 
\begin{equation}\label{mBgeqmH}
    m_B(\Omega) \geq m_H(\partial\Omega).
\end{equation}
Computing the Bartnik mass of a subset $\Omega$ is in general a non-trivial task. For some recent works in this direction, see for instance  Mantoulidis-Schoen \cite{SchoenMant}, CabreroPacheco-Cederbaum-McCormick-Miao \cite{PachecoCederbaumMcCormickMiao} and Miao-Xie \cite{MiaoXie}: while it is clear from the definition that it is conceivable to expect upper bounds by direct comparison with somewhat explicit competitors, the issue of finding explicit lower bounds is more subtle. The latter is one of the goals of the present paper.

%
%
%
%
%

\subsection{Statement of the main results}

The first main result can be informally stated as follows: if  the Hawking quasi-local mass is non-positive locally everywhere in an open set $\Omega$ having non-negative scalar curvature, then $\Omega$ is locally isometric to Euclidean $\R^3$. Below is the precise statement:

\begin{theorem}[Quasi-local rigidity Theorem \ref{prop:Riem0}]\label{thm:HawkMassRigOmega}
Let $(M^3,g)$ be a three-dimensional Riemannian manifold  and let $\Omega\subset M$ be an open subset with non-negative scalar curvature.
Assume that  every $p\in \Omega\setminus \partial M$ admits a neighbourhood $U\subset M\setminus \partial M$ such that
\begin{equation}\label{eq:mHUleq0Intro}
\sup\{m_{H}(\Sigma) \,:\, \Sigma \subset U \emph{ is an immersed 2-dimensional surface}\} \leq 0.
\end{equation}
Then $\Omega\setminus \partial M$ is locally isometric to Euclidean $\R^3$.
\end{theorem}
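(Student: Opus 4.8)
The plan is to show that the hypothesis forces all components of the curvature tensor to vanish at each point of $\Omega \setminus \partial M$; since in dimension three the Weyl tensor vanishes identically, it suffices to show that the Ricci tensor (equivalently the Einstein tensor, or the sectional curvatures) vanishes, and then $\Omega \setminus \partial M$ is flat, hence locally isometric to $\R^3$. The mechanism for extracting curvature information from the Hawking mass is the small-sphere expansion: for geodesic spheres $S_r(p)$ of small radius $r$ centred at a fixed point $p$, one has the classical asymptotic expansion of area and of the Willmore functional $W(S_r(p)) = \int_{S_r(p)} H^2$, giving
\begin{equation}\label{eq:sphexp}
m_H\big(S_r(p)\big) = \frac{r^3}{12}\,\Sc_g(p) + o(r^3) \qquad \text{as } r \to 0.
\end{equation}
So a fixed geodesic sphere only sees the scalar curvature, which is already assumed non-negative; to see the full Ricci tensor one must instead use \emph{perturbed} small spheres — ellipsoid-like surfaces, or small coordinate spheres centred slightly off $p$, or graphs over $S_r(p)$ with a prescribed first-order perturbation — whose Hawking mass expansion at order $r^5$ (the scalar-curvature term being killed) picks up terms quadratic in the traceless Ricci tensor $\mathring{\Ric}(p)$ and in $\nabla \Sc(p)$.

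Concretely, first I would fix $p \in \Omega \setminus \partial M$ and the neighbourhood $U$ from \eqref{eq:mHUleq0Intro}, and work in geodesic normal coordinates centred at $p$. I would then construct a two-parameter family $\Sigma_{r,a}$ of surfaces contained in $U$ for small $r$: for instance, small geodesic spheres re-centred at the point $\exp_p(ra)$ for a unit vector $a$, or small spheres deformed by a second spherical harmonic with amplitude $\sim r$. For each such family, Taylor-expanding the metric and computing $|\Sigma_{r,a}|$ and $W(\Sigma_{r,a})$ to sufficiently high order yields an expansion of the form $m_H(\Sigma_{r,a}) = c_0\,\Sc(p)\,r^3 + r^5\big(Q_a(\mathring{\Ric}(p), \nabla\Sc(p))\big) + o(r^5)$, where $Q_a$ is an explicit quadratic/linear expression depending on the perturbation direction $a$. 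Since $\Sc(p) \ge 0$, the leading term already forces $\Sc(p) = 0$ (otherwise a round geodesic sphere of small enough radius would have strictly positive Hawking mass, contradicting \eqref{eq:mHUleq0Intro}); indeed, $\Sc \equiv 0$ on $\Omega\setminus\partial M$, whence $\nabla\Sc(p)=0$. With $\Sc(p)=0$ and $\nabla\Sc(p)=0$ the order-$r^5$ coefficient reduces to a non-negative (up to an overall positive constant) quadratic form in $\mathring{\Ric}(p)$, evaluated along the chosen perturbation; choosing the perturbation adapted to an eigendirection of $\mathring{\Ric}(p)$ makes this coefficient a positive multiple of $|\mathring{\Ric}(p)|^2$ with a definite sign. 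The non-positivity constraint \eqref{eq:mHUleq0Intro} then forces $\mathring{\Ric}(p) = 0$; combined with $\Sc(p) = 0$ this gives $\Ric(p) = 0$, and since $\Rm = 0$ in dimension three when $\Ric = 0$, the point $p$ is flat. As $p$ was arbitrary, $\Omega \setminus \partial M$ is flat, hence locally isometric to $\R^3$.

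The main obstacle is the sign and structure of the order-$r^5$ term: one must carry out the small-sphere expansion of the Willmore functional carefully enough to identify the coefficient of $|\mathring{\Ric}(p)|^2$ (and to verify that the cross terms with $\nabla \Sc$, once $\Sc$ and $\nabla\Sc$ vanish, drop out) and to check that its sign is such that a suitable perturbation \emph{increases} the Hawking mass above zero unless $\mathring{\Ric}(p)=0$. Equivalently, one needs a family of surfaces realising a strictly positive Hawking mass to leading order whenever the curvature is non-zero — the existence of such ``good competitor'' surfaces is exactly the geometric content here, and it is the part requiring genuine computation rather than soft arguments. A secondary technical point is ensuring the perturbed surfaces stay inside the given neighbourhood $U$ and remain embedded spheres, which is automatic for $r$ small since the constructions are $C^1$-close to round geodesic spheres.
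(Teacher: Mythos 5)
Your proposal follows essentially the same route as the paper: test the local non-positivity hypothesis on suitably perturbed small geodesic spheres, expand the Hawking mass to order $\rho^{5}$, use $\Sc\geq 0$ together with the non-positivity to force $\Sc\equiv 0$ (hence all its derivatives vanish), and then read off from the $\rho^{5}$ coefficient a strictly positive multiple of $\norm{S_{p}}^{2}$, forcing $\Ric\equiv 0$ and hence flatness in dimension three. The computation you defer as the ``main obstacle'' is exactly what the paper carries out in Lemma \ref{lem:estw} and Proposition \ref{prop:expHawk}, where the optimal perturbation $w_{p,\rho}=\bigl(-\tfrac{1}{6}\Ric(\Theta,\Theta)+\tfrac{1}{18}\Sc(p)\bigr)\rho^{2}+\cO_{p}(\rho^{3})$ (note the amplitude is of order $\rho^{2}$, not $\rho$, so that the quadratic Willmore cost does not overwhelm the gain) produces the coefficient $\tfrac{1}{90}\norm{S_{p}}^{2}$ at order $\rho^{5}$, confirming the sign you anticipated.
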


In order to obtain a \emph{global} rigidity result out of the \emph{quasi-local} rigidity Theorem \ref{thm:HawkMassRigOmega}, it is useful to add a suitable asymptotic condition at infinity. A fairly mild asymptotic assumption is the next one (that to the best of our knowledge has not appeared in the literature before). In order to state it, recall that  a sequence $\{p_n\}\subset M$ is said to be \emph{diverging} if, for some (and thus for any) fixed $\Bar{p} \in M$, we have ${\mathsf d}(\Bar{p}, p_n) \rightarrow \infty \  as \  n \rightarrow \infty$, where ${\mathsf d}$ is the Riemannian distance function induced by $g$.

\begin{definition}\label{def:2}
Let $(M,g)$ be a complete Riemannian manifold. We say $(M,g)$  is \emph{asymptotically locally simply connected} (ALSC for short) if it is non-compact and for every  $R > 0$, and any diverging sequence $\{p_n\}\subset M$, there exists $N=N(R)\geq 1$ such that  the ball $B^g_R(p_n)$ is simply connected, for every  $n>N(R)$.    
\end{definition}

\begin{remark}
Note that the ALSC condition is satisfied by AF manifolds since, in this case, the balls $B^g_R(p_n)$ are eventually diffeomorphic to  Euclidean balls, which of course are simply connected. However, ALSC is a much milder condition than AF, as it merely requires a \emph{local} (mild) topological control and  no assumption on the metric tensor and on the \emph{global} topology of $M$ (outside of a compact set). 
Non AF examples of ALSC manifolds include for instance asymptotically conical manifolds, the Bryant soliton \cite{Bryant} and, more generally, $C^{0}$-asymptotically locally Euclidean manifolds \cite{MondNard}.
\end{remark}

\begin{theorem}[Global rigidity Theorem \ref{thm:1}]\label{thm:1Intro}
Let $(M^3,g)$ be a connected, complete Riemannian manifold without boundary and with non-negative scalar curvature. If every $p\in M$ admits a neighbourhood $U$ satisfying the local non-positivity condition \eqref{eq:mHUleq0Intro} for the Hawking mass,  then 
$(M^3,g)$ is isometric to a flat space form. Furthermore, if $(M^3,g)$ is ALSC then it is globally isometric to Euclidean $\R^3$.
\end{theorem}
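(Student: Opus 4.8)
The plan is to derive flatness from the quasi-local rigidity Theorem~\ref{thm:HawkMassRigOmega}, to identify $(M,g)$ with a quotient $\R^3/\Gamma$, and then to use the ALSC hypothesis to force $\Gamma$ to be trivial.

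First I would apply Theorem~\ref{thm:HawkMassRigOmega} with $\Omega=M$. Since $\partial M=\emptyset$, the hypothesis that every $p\in M$ admits a neighbourhood satisfying the local non-positivity condition \eqref{eq:mHUleq0Intro} yields that $(M,g)$ is everywhere locally isometric to $\R^3$, i.e.\ complete, connected and flat. By the Killing--Hopf theorem its Riemannian universal cover is $\R^3$, so $(M,g)=\R^3/\Gamma$ with $\Gamma\cong\pi_1(M)$ a discrete group of Euclidean isometries acting freely and properly discontinuously; this is exactly the assertion that $(M,g)$ is a flat space form. Moreover a non-trivial finite group cannot act freely on the contractible space $\R^3$ --- otherwise $\R^3/\Gamma$ would be a $3$-dimensional aspherical manifold, hence a finite-dimensional $K(\Gamma,1)$, contradicting that non-trivial finite groups have infinite cohomological dimension (alternatively: a finite group of isometries of the $\mathrm{CAT}(0)$ space $\R^3$ fixes the circumcentre of an orbit). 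Hence $\Gamma$ is either trivial, in which case $(M,g)$ is isometric to $\R^3$ and we are done, or infinite.

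It remains to rule out $\Gamma\neq\{1\}$ under the ALSC assumption, which I would do by contradiction: suppose $\Gamma\neq\{1\}$, so $\Gamma$ is infinite and $M$ is non-compact. The key elementary point is that non-trivial deck transformations create short non-contractible loops: if $\gamma\in\Gamma\setminus\{1\}$, $x\in\R^3$ and $\mathsf{d}(x,\gamma x)<2R$, then the image in $M$ of a minimising segment from $x$ to $\gamma x$ is a loop based at $[x]$ lying inside $B^g_R([x])$ (each point of the segment is within $\tfrac12\mathsf{d}(x,\gamma x)<R$ of an endpoint, and both endpoints project to $[x]$) whose lift from $x$ ends at $\gamma x\neq x$, hence is non-contractible in $B^g_R([x])$; so $B^g_R([x])$ is not simply connected. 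It is therefore enough to exhibit a single $R>0$ and a diverging sequence $\{[x_n]\}\subset M$ with $\mathsf{d}(x_n,\gamma_n x_n)<2R$ for suitable $\gamma_n\in\Gamma\setminus\{1\}$: this contradicts ALSC, since it makes $B^g_R([x_n])$ non-simply-connected for every $n$. To build such a sequence I would use the structure of $\Gamma$: let $T\leq\Gamma$ be the normal subgroup of pure translations, spanning a subspace $V\subseteq\R^3$, necessarily with $V\neq\R^3$ (otherwise $\R^3/T$, and hence $M$, would be compact). In the main case $T\neq\{1\}$ one checks, using discreteness of $\Gamma$ --- whence $T$ has finite index and the point group of $\Gamma$ is finite --- that after conjugating $\Gamma$ preserves the splitting $\R^3=V\oplus V^\perp$ and the $\Gamma$-orbit of the origin is contained in $V$; then, choosing $0\neq v\in V$ with $\tau\colon x\mapsto x+v$ in $T$, a unit vector $w\in V^\perp$, and $x_n:=nw$, one gets $\mathsf{d}(x_n,\tau x_n)=\lvert v\rvert$ for all $n$ while $\mathsf{d}_M([x_n],[0])\geq\mathrm{dist}(nw,V)=n\to\infty$, so $\{[x_n]\}$ diverges; taking $R:=\lvert v\rvert$ and $\gamma_n:=\tau$ gives the contradiction, so $\Gamma=\{1\}$ and $(M,g)\cong\R^3$.

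The invocation of Theorem~\ref{thm:HawkMassRigOmega} and the Killing--Hopf reduction are formal, and the short-loop observation above is elementary; the real content, and the step I expect to be the main obstacle, is the structural input in the ALSC argument. Besides the case $T\neq\{1\}$ sketched above one must handle the remaining quotients $\R^3/\Gamma$ whose fundamental group contains no non-trivial translation (the ``screw-motion type'' flat $3$-manifolds) and, more generally, control the Bieberbach-type decomposition of $\Gamma$ along its non-cocompact directions, so as to produce in every non-trivial case an escaping sequence of points of uniformly bounded injectivity radius (equivalently, lying on uniformly short non-contractible loops). That this can always be done is the heart of the ``furthermore'' clause.
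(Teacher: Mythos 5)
Your high-level strategy matches the paper's: obtain flatness from the quasi-local rigidity Theorem~\ref{thm:HawkMassRigOmega}, apply Killing--Hopf to write $M\cong\R^3/\Gamma$ with $\Gamma$ a discrete group of Euclidean isometries acting freely, and then invoke ALSC to force $\Gamma=\{\mathrm{Id}\}$. You organise the last step around the translation subgroup $T\leq\Gamma$ and the span $V$ of its translation vectors, whereas Proposition~\ref{prop:ALSCR3} in the paper instead fixes $n\in\{0,1,2,3\}$ (the rank of a maximal abelian subgroup of $\Gamma$), chooses a splitting $\R^3=\R^n\times\R^{3-n}$, and argues by cases on $n$. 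Your treatment of the sub-case $T\neq\{1\}$ is correct and can be simplified: a pure translation $\tau$ by $v\neq 0$ satisfies $\mathsf{d}(x,\tau x)=|v|$ at \emph{every} $x\in\R^3$, so $B^g_R(p)$ fails to be simply connected at every $p\in M$ once $R>|v|/2$, and \emph{any} diverging sequence then violates ALSC --- you need neither the explicit $x_n=nw$ nor the (non-trivial) claim that the $\Gamma$-orbit of the origin lies in $V$.

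The sub-case you flag and leave open --- $T=\{1\}$ but $\Gamma\neq\{1\}$, i.e.\ groups of screw motions none of whose non-trivial powers is a pure translation --- is the genuine obstacle, and I do not see how to close it from ALSC alone. Take $\Gamma=\langle\gamma_0\rangle$ with $\gamma_0(x,y,z)=(R_\theta(x,y),\,z+1)$ and $\theta/2\pi$ irrational. This acts freely and properly discontinuously, so $\R^3/\Gamma$ is a complete flat non-compact $3$-manifold with $\pi_1\cong\Z$. A diverging sequence must escape radially, $r=\sqrt{x^2+y^2}\to\infty$, and at $[(x,y,z)]$ the injectivity radius is $\tfrac12\min_{k\neq 0}\sqrt{k^2+2r^2(1-\cos(k\theta))}$, which tends to infinity with $r$: only $|k|\leq 2R$ are relevant, and for those $1-\cos(k\theta)$ is bounded away from $0$ by irrationality. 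Thus $\R^3/\Gamma$ satisfies ALSC yet is not simply connected, which appears to contradict the ``furthermore'' clause as stated. The same difficulty affects the paper's own argument: in the cases $n\in\{1,2\}$ of Proposition~\ref{prop:ALSCR3}, the proof takes $\gamma\in\Gamma$ with translation component $0\neq a\in\R^n$ and asserts that $\alpha(s)=(u+sa,v)$ projects to a loop in $B^M_R(\pi(u,v))$ for every $(u,v)$; but $\pi\circ\alpha$ closes up only if the rotational factor $\psi$ of $\gamma$ on $\R^{3-n}$ fixes $v$, which fails for generic $v$ whenever $\gamma$ is a genuine screw --- the actual geodesic loop induced by $\gamma$ has length $\sqrt{|a|^2+|v-\psi(v)|^2}$, unbounded as $|v|\to\infty$. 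So the gap in your proposal is not merely a detail to be filled in; I would check with the authors whether an extra asymptotic assumption is needed for the ``furthermore'' statement.
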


We mention again that, inspired by the above rigidity results, in the Appendix \ref{SS:supHawk} we propose a notion of  ``sup-Hawking mass'' which satisfies some natural properties of a quasi-local mass.
\smallskip

For the sake of the introduction, we confined the presentation to manifolds with non-negative scalar curvature. The reader interested in the extensions to the case of scalar curvature bounded below by a negative (or strictly positive constant) is referred to Section \ref{Sec:OtherRigRes}. Let us recall that such ambifent spaces are particularly relevant when the cosmological constant in Einstein's Equations is non-zero.
\smallskip

In order to prove the above rigidity results, we will compute accurate Taylor expansions of the Hawking mass on suitable spherical surfaces of small area (see Section \ref{SS:someIdeas} for some idea of the methods, and Proposition \ref{prop:expHawk} for the precise statement). In  Theorem \ref{Prop:OuterMin}, we will show that such spherical surfaces are outer-minimising, and thus provide a lower bound on the Bartnik mass (thanks to the monotonicity property \eqref{eq:monmB} and the bound \eqref{mBgeqmH}). As a consequence, we obtain the next lower bound on the Bartnik mass in terms of curvature tensors.

\begin{theorem}[Lower Bound on the Bartnik mass]\label{thm:7-8}
Let $(M^{3},g)$ be an AF Riemannian manifold with non-negative scalar curvature and with (possibly empty) horizon boundary $\partial M$.  Let $\Omega\subset M$ be a bounded open set  with smooth boundary $\partial \Omega$.
Let $p\in \Omega \setminus \partial M$ and let $\bar{\rho}=\bar{\rho}(p):=\inf_{q\in \partial M \cup \partial \Omega} \sfd(p,q)$. Then, for all $\rho\in (0, \bar{\rho}/2)$ the following lower bound holds: 
\begin{equation}\label{eq:LBBart}
    m_B(\Omega) \geq \frac{1}{12}\Sc_p\rho^{3} + \left(\frac{1}{120}\Delta \Sc(p) + \frac{1}{90}\norm{S_{p}}^2 - \frac{1}{144}\Sc_p^2\right)\rho^5 + \cO_{p}(\rho^6),
\end{equation}
where $S:= \Ric-\frac{1}{3} \Sc\, g$ denotes the trace-free Ricci tensor and $\limsup_{\rho\to 0^{+}} \rho^{-6} \, |\cO_{p}(\rho^6)|<\infty$.
\\ In particular,   if $m_B(\Omega) = 0$ then $\Omega \setminus \partial M $ is locally isometric to Euclidean $\R^3$.
\end{theorem}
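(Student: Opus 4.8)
The plan is to bound $m_B(\Omega)$ from below by the Hawking mass of a small spherical surface centred at $p$, then feed in the Taylor expansion of Proposition~\ref{prop:expHawk}; the rigidity statement will drop out by sending the radius to zero.

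Concretely, I would fix $p\in\Omega\setminus\partial M$ and $\rho\in(0,\bar\rho/2)$. The point of the constraint $\rho<\bar\rho/2$ is that then the closed geodesic ball $\overline{B_{2\rho}(p)}$ is a compact subset of $\Omega\setminus\partial M$; in particular $B_\rho(p)\Subset\Omega$, which gives the room needed to apply the outer-minimising statement. Let $\Sigma_\rho=\partial B_\rho(p)$ be the spherical surface of radius $\rho$ about $p$ from Proposition~\ref{prop:expHawk}; by Theorem~\ref{Prop:OuterMin} it is outer-minimising. I then claim $m_B(\Omega)\ge m_H(\Sigma_\rho)$. If $\mathcal{A}=\emptyset$ the left-hand side is $+\infty$ and there is nothing to prove; otherwise take any extension $(\tilde M,\tilde g)\in\mathcal{A}$ of $\Omega$ with $\partial\Omega$ outer-minimising in $\tilde M$. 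Since $B_\rho(p)\Subset\Omega$ embeds isometrically into $\tilde M$, a standard submodularity argument — for a competitor $\Omega'\supseteq B_\rho(p)$ in $\tilde M$, compare $\Sigma_\rho$ with $\Omega'\cap\Omega$, using that $\Sigma_\rho$ is outer-minimising inside $\Omega$ and that $\partial\Omega$ is outer-minimising in $\tilde M$ — shows that $\Sigma_\rho$ is outer-minimising in $\tilde M$. Hence \eqref{eq:mADmgeqmH} gives $m_{\ADM}(\tilde M,\tilde g)\ge m_H(\Sigma_\rho)$; as $m_H(\Sigma_\rho)$ depends only on the induced geometry of a neighbourhood of $\Sigma_\rho$ inside $\Omega$, hence is common to all extensions, taking the infimum over $\mathcal{A}$ gives the claim. (This is precisely the combination of the monotonicity \eqref{eq:monmB} with the bound \eqref{mBgeqmH} alluded to in the introduction.) Substituting the expansion of Proposition~\ref{prop:expHawk} for $m_H(\Sigma_\rho)$ then yields \eqref{eq:LBBart}.

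For the last assertion, suppose $m_B(\Omega)=0$. Then \eqref{eq:LBBart} gives, for every $p\in\Omega\setminus\partial M$ and every $\rho\in(0,\bar\rho(p)/2)$,
\[
0\;\ge\;\tfrac{1}{12}\Sc_p\,\rho^{3}+\Big(\tfrac{1}{120}\Delta\Sc(p)+\tfrac{1}{90}\norm{S_p}^{2}-\tfrac{1}{144}\Sc_p^{2}\Big)\rho^{5}+\cO_p(\rho^{6}).
\]
Dividing by $\rho^{3}$ and letting $\rho\to0^{+}$ (the error term is controlled, so the limit is legitimate) forces $\Sc_p\le0$; since $\Sc\ge0$ on $M$ we get $\Sc_p=0$, and as $p$ is arbitrary, $\Sc\equiv0$ on the open set $\Omega\setminus\partial M$, whence also $\Delta\Sc\equiv0$ there. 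Inserting this back, dividing by $\rho^{5}$ and letting $\rho\to0^{+}$ forces $\norm{S_p}^{2}\le0$, so $S_p=0$, i.e. $\Ric_p=\tfrac13\Sc_p\,g_p=0$, at every $p\in\Omega\setminus\partial M$. Thus $(\Omega\setminus\partial M,g)$ is Ricci-flat, and since in dimension three the curvature tensor is an algebraic function of the Ricci tensor, $(\Omega\setminus\partial M,g)$ is flat, i.e. locally isometric to Euclidean $\R^{3}$. (Alternatively, once $\Sc\equiv0$ is known, this last step could be routed through Theorem~\ref{thm:HawkMassRigOmega}.)

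The step I expect to be the genuine obstacle is the inequality $m_B(\Omega)\ge m_H(\Sigma_\rho)$: the surface $\Sigma_\rho$ is outer-minimising only in a controlled neighbourhood of $p$, whereas the Bartnik mass involves arbitrary global extensions $(\tilde M,\tilde g)$, so one must upgrade it to outer-minimisation inside each such $\tilde M$ — and this is exactly where the outer-minimising hypothesis on $\partial\Omega$ and the submodularity of the perimeter enter. Everything downstream of that inequality is a direct consequence of the Hawking-mass expansion.
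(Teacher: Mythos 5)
Your proposal is correct and follows essentially the paper's own argument: the paper likewise bounds $m_B(\Omega)$ from below by $m_H(S_{p,\rho}(w_{p,\rho}))$ using the outer-minimising property of Theorem \ref{Prop:OuterMin} combined with the monotonicity \eqref{eq:monmB} and the bound \eqref{mBgeqmH} (which you merely unpack into the explicit submodularity argument inside an arbitrary admissible extension), then inserts the expansion of Proposition \ref{prop:expHawk} and deduces the rigidity exactly as in Theorem \ref{prop:Riem0}. The only point to fix is notation: the test surface must be the optimally perturbed geodesic sphere $S_{p,\rho}(w_{p,\rho})$ of Lemma \ref{lem:estw}, not the bare geodesic sphere suggested by writing $\Sigma_\rho=\partial B_\rho(p)$, since for unperturbed geodesic spheres the $\tfrac{1}{90}\norm{S_p}^2$ term (and hence the flatness conclusion when $m_B(\Omega)=0$) would be missing; your appeal to Proposition \ref{prop:expHawk} shows this is what you intend.
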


\begin{remark}
Notice that \eqref{eq:LBBart}  gives a strictly positive (yet small) lower bound on the Bartnik mass of $\Omega$, provided that 
\begin{itemize}
\item either: $\Omega$ contains a point $p$ with $\Sc_p>0$ 
\item or: $\Sc \equiv 0$ on $\Omega$ and  there exists $p\in \Omega$ with  $\|S_p\|\neq 0$.
\end{itemize}
As observed in the proof of Theorem  \ref{prop:Riem0},  if $\Omega$ has non-negative scalar curvature and it does not have constant sectional curvature, then one of the two conditions above must be satisfied.
\end{remark}

\begin{remark}
Note that  the lower bound \eqref{eq:LBBart} is interesting only when $\Omega$ does not contain some connected component of $\partial M$. Indeed if $\Omega$ contains some connected component $\Sigma$ of   $\partial M$, then  the Riemannian Penrose Inequality  \cite{Bray, HuiskenIlmanen} yields that  $m_B(\Omega)\geq \sqrt{ \frac{ |\Sigma| }{16 \pi}}$. This would give a \emph{definite} lower bound on  $m_B(\Omega)$ in contrast with the \emph{infinitesimal} lower bound \eqref{eq:LBBart}.  However, if $\partial M \cap \Omega= \emptyset$,  the lower bound \eqref{eq:LBBart} seems to be new and interesting.   

Let us also mention the recent work  \cite{Wiygul} by Wiygul, where the first order Taylor expansion of the Bartnik mass is computed for closed geodesic balls of small radius $\rho>0$ and center $p\in M$, giving  $\frac{1}{12}\Sc_p\rho^{3}$.  Under the additional condition that the Riemann curvature tensor vanishes at $p$,  the first order Taylor expansion of the Bartnik mass for such geodesic balls is given by  $\frac{1}{120}\Delta \Sc(p) \rho^5$. 
\\Note that these results are in accordance with the lower bound given in \eqref{eq:LBBart}, which holds without the assumption that the Riemann curvature tensor vanishes at $p$.
\end{remark}



\begin{remark}
The fact that  $m_B(\Omega) = 0$ forces $\Omega  \setminus \partial M$ to be locally isometric to Euclidean $\R^3$ was obtained with different methods by Huisken-Ilmanen (see \cite[Positivity Property 9.2]{HuiskenIlmanen}).
In the same paper, Huisken-Ilmanen write \emph{``careful examination of the proof (of the positivity property 9.2) should give a small, but
explicit lower bound for  $m_B(\Omega)$  in terms of the scalar curvature in a
small region''}.  Our methods permit to implement this: indeed \eqref{eq:LBBart} gives a small, but quantitative lower bound for $m_B(\Omega)$ in terms of the scalar curvature (at first order, and in terms of other curvature tensors for higher order) in a small region (in our arguments, such a region will be a perturbed geodesic sphere of small radius).
\end{remark}

\subsection{Some ideas of the proofs}\label{SS:someIdeas}

In order to prove the quasi-rigidity Theorem \ref{thm:HawkMassRigOmega}, it is of course key to identify suitable competitors in order to test the condition \eqref{eq:mHUleq0Intro}.
A first attempt would be to use geodesic spheres $S_{p,\rho}$ contained in the open set $U$. However such surfaces are not ``optimal enough'' for the Hawking mass. Indeed, using the expansions of the paper (which in turn build on top of \cite{Pacard, Mondino2, Mondino}), one can check that
\begin{align*}
	m_H(S_{p,\rho}) &= \sqrt{\frac{|S_{p,\rho}|_{\mathring{g}}}{(16\pi)^3}}\left( \frac{8\pi}{3}\Sc_p\rho^2 - \left[ \frac{4\pi}{27}\Sc_p^2 - \frac{4\pi}{15}\Delta \Sc(p) \right]\rho^4 + \cO(\rho^5) \right) \\&= \frac{1}{12}\Sc_p\rho^{3} - \left( \frac{1}{144}\Sc_p^2 - \frac{1}{120}\Delta \Sc(p)\right)\rho^5 + \cO(\rho^6).
\end{align*}
Note that such expansions give no interesting information in case $\Sc \equiv 0$. The idea is thus to ``optimally perturb'' the geodesic spheres by suitable normal graphs, motivated by the fact (not strictly necessary for the arguments, but useful as a motivation) that the maximisers of the Hawking mass under small area constraint are indeed perturbed geodesic spheres. This principle has been already observed by Lamm-Metzger \cite{LammMetzger2013}, who proved $W^{2,2}$-closeness to a geodesic sphere under a small energy assumption, and by Laurain-Mondino \cite{MondinoLaurain}, who proved smooth convergence to  a geodesic sphere under a milder energy assumption. For the reader's convenience, we will give a self-contained proof of this fact in the exact framework of the present paper in Proposition \ref{prop:SigmaSprhow} in the appendix.
\smallskip

In order to compute the Hawking mass for such optimal competitors, in Lemma \ref{lem:estw} we prove that the  graph function $w_{p,\rho}$ for an optimally perturbed geodesic sphere must satisfy a precise expansion in terms of $\rho$ and of curvature tensors at $p$; namely it has the form
\begin{equation}\label{eq:estwIntro}
    w_{p,\rho}(\Theta) = \Big(-\frac{1}{6}\Ric(\Theta,\Theta) + \frac{1}{18}\Sc(p)\Big)\rho^2 + \cO_{p}(\rho^3),
\end{equation}
where $\Theta\in {\mathbb S}^{2}$ is the parametrising coordinate, and  $\limsup_{\rho\to 0}  \rho^{-3} \, \left \| \cO_{p}(\rho^{3}) \right \|_{C^{4,\alpha}({\mathbb S}^{2})}<\infty$.
\\Plugging such an expansion of the optimal normal graph  $w_{p,\rho}$ into the definition of the Hawking mass gives, after some computational efforts, the expansion of  $m_{H}$ on such optimal competitors:
\begin{equation}\label{eq:mHSprhowIntro}
    m_H(S_{p,\rho}(w_{p,\rho})) =\frac{1}{12} \Sc_p\rho^{3} +  \left(\frac{1}{120} \Delta \Sc(p) + \frac{1}{90}\norm{S_p}^2 - \frac{1}{144}\Sc_p^2  \right)  \rho^5 + \cO_{p}(\rho^6). 
\end{equation}
 This is proved in Proposition \ref{prop:expHawk}, which represents the key technical result of the paper. Indeed, once \eqref{eq:mHSprhowIntro} is proved, it is not hard to obtain the quasi-local rigidity Theorem \ref{thm:HawkMassRigOmega} (see Theorem \ref{prop:Riem0} in the body of the paper): basically the assumption  $\Sc\geq 0$ on $\Omega$ coupled with the condition that $m_{H}(S_{p,\rho}(w_{p,\rho}))\leq 0$ for every $p\in \Omega$ forces $\Omega$ to be Ricci flat and then flat,  thanks to the expansion \eqref{eq:mHSprhowIntro}. 
 \smallskip
 
 The global rigidity Theorem  \ref{thm:1Intro} (Theorem  \ref{thm:1} in the body of the paper) follows from the  quasi-local rigidity Theorem \ref{thm:HawkMassRigOmega} and the classification of flat manifolds (see for instance \cite{Wolf}), plus a  case by case analysis when applying the ALSC condition.
  \smallskip
 
 In order to obtain the lower bound on the Bartnik mass (Theorem \ref{thm:7-8}) out of the expansion \eqref{eq:mHSprhowIntro}, we prove that the optimally perturbed geodesics spheres $S_{p,\rho}(w_{p,\rho})$ are outer-minimising. This is achieved in Theorem \ref{Prop:OuterMin}  via a blow-up argument. In order to gain strong enough compactness in such an argument, we employ the regularity theory for quasi-minimisers of the perimeter by Tamanini \cite{Tamanini}, refining previous celebrated results by De Giorgi \cite{DeGiorgi}.
 \\Once it is established that $S_{p,\rho}(w_{p,\rho})$ are outer-minimising, Theorem \ref{thm:7-8} follows from the expansion \eqref{eq:mHSprhowIntro} combined with the monotonicity property \eqref{eq:monmB} and the bound \eqref{mBgeqmH}.

\bigskip

\noindent {\bf Acknowledgments}. 
A.M. is supported by the European Research Council (ERC), under the European's Union Horizon 2020 research and innovation programme, via the ERC Starting Grant ``CURVATURE'', grant agreement No. 802689.
\\A.T. is supported by the Engineering and Physical Sciences Research Council (EPSRC), via the grant EP/N509796/1.
\\The authors are grateful to the reviewers whose careful reading and comments improved the exposition of the manuscript.

\section{Preliminaries and notation}

%
%
%
%
%
%
%
%
%

\qquad 1) We will use the convention that  greek index letters (e.g. $\mu,\nu,\eta,$ etc.) varies from $1$ to $3$ while latin index letters (e.g. $i,j,k,l$ etc.) vary from $1$ to $2$.  We will adopt the Einstein convention for summation over repeated indices. 
\\

2) For a 3-dimensional Riemannian manifold $(M^3,g)$ with Levi-Civita connection $\nabla$,  the Riemann curvature endomorphism is given by:
\begin{equation*}
    {\mathcal R}(X,Y)Z = \nabla_X\nabla_YZ - \nabla_Y\nabla_XZ - \nabla_{[X,Y]}Z
\end{equation*}
for vector fields $X,Y,Z$ on $M$. The associated Riemann curvature $(0,4)$-tensor is:
\begin{equation*}
    \Rm(X,Y,Z,W) = g({\mathcal R}(Z,W)Y,X).
\end{equation*}
The Ricci curvature tensor is the trace over the first and third indices of $\Rm$: i.e. if $\{E_{\mu}\}_{\mu=1,2,3}\subset T_pM$ is an orthonormal basis, we have
\begin{equation*}
    \Ric(X,Y) = \sum_{\mu=1}^{3} \Rm(E_\mu,X,E_\mu,Y), \quad \forall X,Y\in T_{p}M. 
\end{equation*}
 The scalar curvature,  denoted by $\Sc$, is the trace of the Ricci tensor:
\begin{equation*}
   \Sc(p) =\sum_{\mu=1}^{3} \Ric(E_{\mu},E_{\mu}).
 \end{equation*}
 We adopt the standard index notation $R_{\mu\nu}:=\Ric(E_{\mu},E_{\mu})$. A key tensor for this paper is the traceless Ricci tensor
 \begin{equation}\label{eq:defTrFreeRic}
 S:= \Ric-\frac{1}{3} \Sc\; g.
 \end{equation}
 It is easily seen that $\|S\|^{2}= \|\Ric\|^{2}-\frac{1}{3} \Sc^{2}$.
 
3) Let $(\mathring{M},\mathring{g})\hookrightarrow (M,g)$ be an isometrically immersed, closed, 2-sided,  2-dimensional surface with inward pointing normal unit vector $N$.  The (scalar) second fundamental form $h$ is defined by
\begin{equation}\label{eq:13}
    h(X,Y) = g(\nabla_XY, N) = -g(\nabla_X N,Y) ,
\end{equation}
for $X,Y\subset T\Sigma$ vector fields tangent to $\Sigma$. The two eigenvalues $k_1$ and $k_2$ of $h$ at  $p\in \Sigma$ are called \emph{the principal curvatures}. We set 
$$
H := k_1 + k_2 \quad \text{and} \quad D := k_1k_2,
$$ 
where $H$ is called \emph{the mean curvature}. Let $\mathring{g}$ denote the restriction of $g$ to $T\Sigma$ (with matrix $\mathring{g}_{ij}$ with respect to a fixed set of coordinates) and let $\mathring{g}^{ij}$ denote the matrix of $\mathring{g}^{-1}$. It holds
\begin{equation*}
    H = \Tr_{\mathring{g}}(h) = \mathring{g}^{ij}h_{ij} \qquad D = \det(\mathring{g}^{ik}h_{kj}) = \frac{\det h_{ij}}{\det\mathring{g}_{kl}} .
\end{equation*}

4) Large positive constants are always denoted by $C$. The value of
$C$ is allowed to vary from formula to formula and also within the
same line. When we want to stress the dependence of the constants on
some parameter (or parameters), we either add subscripts to $C$, e.g. $C_{\delta}$, or we mention the dependence with parenthesis, e.g. $C(\delta)$. Also constants with subscripts (or with parenthesis) are allowed to vary.
%
%
%
%
%
%

\subsection{Perturbed geodesic spheres}

\subsubsection{Notation about perturbed geodesic spheres}
Denote with ${\mathbb S}^2$ the standard unit sphere in the Euclidean $3$-dimensional space $\R^3$, $\Theta \in {\mathbb S}^2$  the radial unit vector with components $\Theta^\mu$  parametrised by the polar coordinates $0<\theta^1<\pi$ and  $0<\theta^2< 2\pi$ chosen in order to satisfy
\begin{displaymath} 
\left\{ \begin{array}{ll}
\Theta^1= \sin \theta ^1 \cos \theta ^2 \\
\Theta^2= \sin \theta ^1 \sin \theta ^2 \\
\Theta^3= \cos \theta ^1. \\
\end{array} \right.
\end{displaymath}
Call with  $\Theta_i$ the coordinate vector fields on ${\mathbb S}^2$: 
\begin{equation}\label{eq:defTheta}
\Theta_1:= \frac{\partial \Theta}{\partial \theta^1}, \quad \Theta_2:= \frac{\partial \Theta}{\partial \theta^2},
\end{equation}
and $\bar{\theta}_i$ or $\bar{\Theta}_i$ the corresponding normalised vectors: 
$$\bar{\theta}_1=\bar{\Theta}_1:=\frac{\Theta_1}{\|\Theta_1\|}, \quad \bar{\theta_2}=\bar{\Theta}_2:=\frac{\Theta_2}{\|\Theta_2\|}.$$
 We next define the perturbed geodesic spheres $S_{p,\rh}(w)$ in the 3-dimensional Riemannian manifold $(M,g)$.
\\Fix a point $p\in M\setminus \partial M$ and consider the exponential map 
$\Exp_{p}$ centred at $p$. For $\rh>0$ small enough, the sphere $\rh {\mathbb S}^2\subset T_pM$ is contained in the injectivity radius of the exponential map. We  call $S_{p,\rh}$ the \emph{geodesic sphere} of center $p$ and radius $\rh$, parametrised by
$$\Theta \in {\mathbb S}^2 \subset T_pM \mapsto \Exp_{p}[\rh \Theta].$$
The perturbed geodesic spheres are normal graphs on geodesic spheres, by a function $w$ belonging to the following suitable function spaces (chosen for technical reasons in order to apply Schauder estimates in Lemma \ref{lem:estw}). 
\\Denote with $C^{4,\a}({\mathbb S}^2)$ (or simply $C^{4,\a}$)  the set of the $C^4$ functions on ${\mathbb S}^2$ whose fourth derivatives, with respect to the tangent vector fields, are $\a$-H\"older continuous ($0<\a<1$). The Laplace-Beltrami operator on ${\mathbb S}^2$ is denoted by $\Delta_{{\mathbb S}^2}$ or, if there is no ambiguity, by $\Delta$.
The fourth order elliptic operator $\Delta (\Delta +2)$ induces the following orthogonal splitting of $L^2({\mathbb S}^2)$:
$$L^2({\mathbb S}^2)= \Ker[\Delta (\Delta+2)]\oplus \Ker[\Delta (\Delta+2)]^\perp; $$
note that the splitting makes sense since the kernel is finite (four) dimensional, thus a closed subspace.
\\If we consider $C^{4,\a}({\mathbb S}^2)$ as a subspace of $L^2({\mathbb S}^2)$, we can define 
$$C^{4,\alpha}({\mathbb S}^2)^\perp :=C^{4,\alpha}({\mathbb S}^2)\cap \Ker[\Delta (\Delta+2)]^\perp.$$
Of course ${C^{4,\alpha}({\mathbb S}^2)}^\perp$ is a Banach space with respect to the $C^{4,\a}$ norm; it is the space from which we will draw the perturbation $w$. If there is no confusion $C^{4,\alpha}({\mathbb S}^2)^\perp$ will be called simply ${C^{4,\a}}^\perp$.

We can now define the \emph{perturbed geodesic spheres} that we will use as ``test'' surfaces for the Hawking mass.
Fix $p\in M$, $\rh>0$ and a small $C^{4,\a}({\mathbb S}^2)$ function $w$; the perturbed geodesic sphere $S_{p,\rh}(w)$ is the surface parametrised by
$$\Theta \in {\mathbb S}^2 \mapsto \Exp_{p}[\rho\big(1-w(\Theta)\big) \Theta].$$
The tangent vector fields on $S_{p,\rh}(w)$ induced by the canonical polar coordinates on ${\mathbb S}^2$ are denoted by $Z_i$.
\\

Following the notation of \cite{Pacard}, given $a \in \N$, we denote with  $\cL_p^{(a)}(w)$ an arbitrary linear combination of the function $w$ together 
with its derivatives with respect to the tangent vector fields $\Theta_i$ up to order $a$. The coefficients of $\cL_p^{(a)}$ 
may depend on $\rh$ and $p$ but, for all $k \in \N$, there exists a constant $C=C_{p}>0$ independent on $\rh \in (0,1)$  such that 
$$\|\cL_p^{(a)}(w)\|_{C^{k,\a}({\mathbb S}^2)}\leq C \|w \|_{C^{k+a,\a}({\mathbb S}^2)}.$$  

 Similarly,  given $a,b\in \N$, we denote with $\cQ_p^{(b)(a)}(w)$ an arbitrary nonlinear combination, of order at least $b$, of the function $w$ together with its derivatives with respect to the tangent vector fields $\Theta_i$ up to order $a$ such that  $\cQ_p^{(b)(a)}(0)=0$, for every $p \in M$. The coefficients of the Taylor expansion of $\cQ_p^{(b)(a)}(w)$ in powers of $w$ and its partial derivatives may depend on $\rh$ and $p$ but, for all $k \in \N$, there exists a constant $C=C_{p}>0$ independent on $\rh \in (0,1)$  such that 
\begin{equation}\label{eq:estQ}
\|\cQ_p^{(b)(a)}(w_2)-\cQ_p^{(b)(a)}(w_1) \|_{C^{k,\a}({\mathbb S}^2)}\leq C \big(\|w_2\|_{C^{k+a,\a}({\mathbb S}^2)}+\|w_1\|_{C^{k+a,\a}({\mathbb S}^2)}\big)^{b-1}
\times \|w_2-w_1\|_{C^{k+a,\a}({\mathbb S}^2)},
\end{equation}
provided $\|w_l\|_{C^{a}({\mathbb S}^2)}\leq 1$, $l=1,2$. 
We also agree that $\cO_p(\rho ^d)$ denotes an arbitrary smooth function on ${\mathbb S}^2$ that might depend on $p$ but which is bounded by a constant (possibly dependent on $p$) times $\rho ^d$ in $C^k(B_{1}(p))$ topology, for all $k \in \N$.

\subsubsection{Expansions of geometric quantities}\label{SS:GeomExp}
In this subsection we recall the Taylor expansion of the geometric quantities associated to the a perturbed geodesic sphere $S_{p,\rho}(w)$, appearing in the Willmore functional and its first derivative (for the proofs see \cite{Pacard} and \cite[Section 3.1]{Mondino}). These will be used in later sections and hold for any Riemannian 3-manifold.
\\For the following expansions we will fix the (polar) coordinate vector fields $\Theta_{i}$ on  ${\mathbb S}^{2}$ defined in \eqref{eq:defTheta}, and  the corresponding coordinate vector fields $Z_{i}$ on $S_{p,\rho}(w)$; i.e. we will use the notation $ \mathring{g}_{ij}:= \mathring{g}(Z_{i}, Z_{j})$ (resp. $(g_{{\mathbb S}^2})_{ij}:=g(\Theta_{i}, \Theta_{j})$) and analogously $h_{ij}:=h(Z_{i}, Z_{j})$.   The derivatives of the function $w:{\mathbb S}^{2}\to \R$ with respect to $\Theta_{i}$ are denoted by $w_{i}$. All the curvature terms, all the covariant derivatives and all the scalar products are meant to be evaluated at $p$ (since we fixed normal coordinates centred at $p$, at $p$ the metric is Euclidean).

The following expansions hold:

{\small

\begin{align*}
    g_{\mu\nu} &= \delta_{\mu\nu} + \frac{1}{3}g({\mathcal R}(\Theta,E_{\mu})\Theta,E_{\nu})(1 - w)^2\rho^2 + \frac{1}{6}g(\nabla_{\Theta}{\mathcal R}(\Theta,E_{\mu})\Theta,E_{\nu})(1 - w)^3\rho^3 \\ &\quad+ \frac{1}{20}g(\nabla_{\Theta \Theta}^2{\mathcal R}(\Theta,E_{\mu})\Theta,E_{\nu})(1 - w)^4\rho^4 \\ &\quad+ \frac{2}{45}g({\mathcal R}(\Theta,E_{\mu})\Theta,E_{\tau})g({\mathcal R}(\Theta,E_{\nu})\Theta,E_{\tau})(1 - w)^4\rho^4 \\
    & \quad + \cO_{p}(\rho^5) + \rho^5 \cL_p^{(0)}(w) +  \rho^5 \cQ_{p}^{(2)(0)}(w)
\end{align*}

\begin{align}\label{eq:Expgij}
   \mathring{g}_{ij} &= g_{ij}^{{\mathbb S}^2}(1 - w)^2\rho^2 + w_iw_j\rho^2 + \frac{1}{3}g({\mathcal R}(\Theta,\Theta_i)\Theta,\Theta_j)(1 - w)^4\rho^4 \nonumber\\
    &\quad+ \frac{1}{6}g(\nabla_{\Theta}{\mathcal R}(\Theta,\Theta_i)\Theta,\Theta_j)(1 - w)^5\rho^5 + \frac{1}{20}g(\nabla_{\Theta \Theta}^2{\mathcal R}(\Theta,\Theta_i)\Theta,\Theta_j)(1 - w)^6\rho^6 \\ &\quad+ \frac{2}{45} \delta^{\mu\nu}g({\mathcal R}(\Theta,\Theta_i)\Theta,E_{\mu})g({\mathcal R}(\Theta,\Theta_j)\Theta,E_{\nu})(1 - w)^6\rho^6 
     \nonumber\\ &\quad + \cO(\rho^7) + \rho^7 \cL_p^{(0)}(w)+  \rho^7 \cQ_{p}^{(2)(0)}(w)\nonumber
\end{align}

%

\begin{align}\label{eq:ExpgijInv}
\mathring{g}^{ij} = &\, g_{{\mathbb S}^2}^{ij}(1 - w)^{-2}\rho^{-2} - g_{{\mathbb S}^2}^{il}g_{{\mathbb S}^2}^{kj}w_lw_k(1 - w)^{-4}\rho^{-2} - \frac{1}{3}g_{{\mathbb S}^2}^{il}g({\mathcal R}(\Theta,\Theta_l)\Theta,\Theta_k)g_{{\mathbb S}^2}^{kj} \nonumber \\
&- \frac{1}{6}g_{{\mathbb S}^2}^{il}g(\nabla_{\Theta}{\mathcal R}(\Theta,\Theta_l)\Theta,\Theta_k)g_{{\mathbb S}^2}^{kj}(1 - w)\rho - \frac{1}{20}g_{{\mathbb S}^2}^{il}g(\nabla^{2}_{\Theta\Theta} {\mathcal R}(\Theta,\Theta_l)\Theta,\Theta_k)g_{{\mathbb S}^2}^{kj}(1 - w)^2\rho^2 \nonumber \\
 &- \frac{2}{45}\delta^{\mu\nu} g_{{\mathbb S}^2}^{il}g({\mathcal R}(\Theta,\Theta_l)\Theta,E_{\mu})g({\mathcal R}(\Theta,\Theta_k)\Theta,E_{\nu})g_{{\mathbb S}^2}^{kj}(1 - w)^2\rho^2  \\
  &+ \frac{1}{9}g_{{\mathbb S}^2}^{il}g({\mathcal R}(\Theta,\Theta_l)\Theta,\Theta_k)g_{{\mathbb S}^2}^{kn}g({\mathcal R}(\Theta,\Theta_n)\Theta,\Theta_m)g_{{\mathbb S}^2}^{mj}(1 - w)^2\rho^2  \nonumber\\
  &+ \cO_{p}(\rho^3) +   \rho^3 \cL_{p}^{(0)}(w) +  \rho^2 \cQ_{p}^{(2)(0)}(w) + \rho^{-2}  \cQ_{p}^{(4)(1)}(w). \nonumber
\end{align}

For the details of the derivation of the next expansion, the interested reader can see the Appendix \ref{SS:AppSecFF}.
\begin{align} \label{eq:Exphij}
h_{ij} =&\, g_{ij}^{{\mathbb S}^{2}}(1 - w)\rho + (\Hess_{{\mathbb S}^2}(w))_{ij}\rho    \nonumber\\
&+\frac{1}{2} g_{ij}^{{\mathbb S}^{2}}  g^{kl}_{{\mathbb S}^{2}} w_{k} w_{l} \rho + w_{k} g_{{\mathbb S}^2}^{kl}\Big(g^{{\mathbb S}^{2}}_{jl} w_{i} +g^{{\mathbb S}^{2}}_{il} w_{j}-g^{{\mathbb S}^{2}}_{ij} w_{l} \Big)\rho  \nonumber\\
& + \frac{2}{3}g({\mathcal R}(\Theta,\Theta_i)\Theta,\Theta_j)(1 - w)^3\rho^3  \nonumber\\
&+ \frac{1}{6}w_kg_{{\mathbb S}^2}^{kn}g_{{\mathbb S}^2}^{ml}g({\mathcal R}(\Theta,\Theta_n)\Theta,\Theta_m)\Big(\partial_i g^{{\mathbb S}^{2}}_{jl } + \partial_j g^{{\mathbb S}^{2}}_{il}  - \partial_lg^{{\mathbb S}^{2}}_{ij} \Big)\rho^3   \\ 
&- \frac{1}{6}w_kg_{{\mathbb S}^2}^{kl}\Big(\partial_ig({\mathcal R}(\Theta,\Theta_j)\Theta,\Theta_l) + \partial_jg({\mathcal R}(\Theta,\Theta_i)\Theta,\Theta_l) - \partial_lg({\mathcal R}(\Theta,\Theta_i)\Theta,\Theta_j)\Big)\rho^3 \nonumber\\ 
    & + \frac{5}{12}g(\nabla_{\Theta}{\mathcal R}(\Theta,\Theta_i)\Theta,\Theta_j) \rho^4   + \frac{3}{20}g(\nabla^{2}_{\Theta\Theta}{\mathcal R}(\Theta,\Theta_i)\Theta,\Theta_j) \rho^5  \nonumber\\ 
    & + \frac{2}{15}\delta^{\mu\nu}g({\mathcal R}(\Theta,\Theta_i)\Theta,E_{\mu})g({\mathcal R}(\Theta,\Theta_j)\Theta,E_{\nu}) \rho^5  \nonumber\\ 
   &    + \cO_{p}(\rho^6) +  \rho^4 \cL^{(1)}_{p}(w) +  \rho \cQ^{(3)(2)}_{p}(w)+  \rho^3 \cQ_{p}^{(2)(1)}(w). \nonumber
\end{align}

Recalling that $H= \mathring{g}^{ij} h_{ij}$, the combination of \eqref{eq:ExpgijInv} and \eqref{eq:Exphij} gives:
\begin{align}\label{eq:ExpH}
H =&\,  2\rho^{-1} + (2 + \Delta_{{\mathbb S}^2})w\rho^{-1} + 2w(w+\Delta_{{\mathbb S}^2}w)\rho^{-1}  \nonumber\\ 
        &+ \frac{1}{6}w_kg_{{\mathbb S}^2}^{ij}g_{{\mathbb S}^2}^{kn}g_{{\mathbb S}^2}^{ml}g({\mathcal R}(\Theta,\Theta_n)\Theta,\Theta_m) \Big(\partial_i g^{{\mathbb S}^{2}}_{jl } + \partial_j g^{{\mathbb S}^{2}}_{il}  - \partial_lg^{{\mathbb S}^{2}}_{ij} \Big) \rho \nonumber\\
      &- \frac{1}{6}w_kg_{{\mathbb S}^2}^{ij}g_{{\mathbb S}^2}^{kl}\Big(\partial_ig({\mathcal R}(\Theta,\Theta_j)\Theta,\Theta_l) + \partial_jg({\mathcal R}(\Theta,\Theta_i)\Theta,\Theta_l) - \partial_lg({\mathcal R}(\Theta,\Theta_i)\Theta,\Theta_j)\Big)\rho \nonumber\\
      & -\frac{1}{3}g_{{\mathbb S}^2}^{il}g_{{\mathbb S}^2}^{kj}g({\mathcal R}(\Theta,\Theta_l)\Theta,\Theta_k)(\Hess_{{\mathbb S}^2}(w))_{ij}\rho - \frac{1}{3}\Ric(\Theta,\Theta)(1 - w)\rho \nonumber\\
       &+ \frac{1}{4}g_{{\mathbb S}^2}^{ij}g(\nabla_{\Theta}{\mathcal R}(\Theta,\Theta_i)\Theta,\Theta_j)(1 - w)^2\rho^2  \\
       &+ \Bigg[\frac{1}{10}g_{{\mathbb S}^2}^{ij}g(\nabla_{\Theta \Theta}^2{\mathcal R}(\Theta,\Theta_i)\Theta,\Theta_j) + \frac{4}{45}g_{{\mathbb S}^2}^{ij} \delta^{\mu\nu}g({\mathcal R}(\Theta,\Theta_i)\Theta,E_{\mu})g({\mathcal R}(\Theta,\Theta_j)\Theta,E_{\nu}) \nonumber\\
        &\qquad - \frac{1}{9}g_{{\mathbb S}^2}^{il}g_{{\mathbb S}^2}^{kj}g({\mathcal R}(\Theta,\Theta_i)\Theta,\Theta_j)g({\mathcal R}(\Theta,\Theta_l)\Theta,\Theta_k)\Bigg](1 - w)^3\rho^3 \nonumber\\
        & + \cO_{p}(\rho^4)+  \rho^2 \cL_{p}^{(1)}(w)  +  \rho^{-1} \cQ_{p}^{(3)(2)}(w)+   \rho \cQ_{p}^{(2)(1)}(w)\nonumber ,
\end{align}
and thus
\begin{align}\label{eq:ExpH2}
    H^2 =&\,  \Bigg[4 + 4(2 + \Delta_{{\mathbb S}^2})w + 8w(w+\Delta_{{\mathbb S}^2}w) + ((2 + \Delta_{{\mathbb S}^2})w)^2 \Bigg]\rho^{-2}\nonumber\\ 
       & + \Bigg[\frac{2}{3}w_kg_{{\mathbb S}^2}^{ij}g_{{\mathbb S}^2}^{kn}g_{{\mathbb S}^2}^{ml}g({\mathcal R}(\Theta,\Theta_n)\Theta,\Theta_m) ) \Big(\partial_i g^{{\mathbb S}^{2}}_{jl } + \partial_j g^{{\mathbb S}^{2}}_{il}  - \partial_lg^{{\mathbb S}^{2}}_{ij} \Big) \nonumber\\ 
    &\quad -\frac{2}{3}\Ric(\Theta,\Theta)(2 + \Delta_{{\mathbb S}^2}w) -\frac{4}{3}g_{{\mathbb S}^2}^{il}g_{{\mathbb S}^2}^{kj}g({\mathcal R}(\Theta,\Theta_l)\Theta,\Theta_k)(\Hess_{{\mathbb S}^2}(w))_{ij} \nonumber\\ 
    & \quad - \frac{2}{3}w_kg_{{\mathbb S}^2}^{ij}g_{{\mathbb S}^2}^{kl}\Big(\partial_ig({\mathcal R}(\Theta,\Theta_j)\Theta,\Theta_l) + \partial_jg({\mathcal R}(\Theta,\Theta_i)\Theta,\Theta_l) - \partial_lg({\mathcal R}(\Theta,\Theta_i)\Theta,\Theta_j)\Big)\Bigg] \nonumber\\ 
    &+\Bigg[g_{{\mathbb S}^2}^{ij}g(\nabla_{\Theta}{\mathcal R}(\Theta,\Theta_i)\Theta,\Theta_j)\Bigg]\rho \\ 
    &+\Bigg[\frac{2}{5}g_{{\mathbb S}^2}^{ij}g(\nabla^{2}_{\Theta \Theta} {\mathcal R}(\Theta,\Theta_i)\Theta,\Theta_j) + \frac{16}{45}\delta^{\mu\nu}g_{{\mathbb S}^2}^{ij}g({\mathcal R}(\Theta,\Theta_i)\Theta,E_{\mu})g({\mathcal R}(\Theta,\Theta_j)\Theta,E_{\nu}) \nonumber\\  
    &\quad - \frac{4}{9}g_{{\mathbb S}^2}^{il}g_{{\mathbb S}^2}^{kj}g({\mathcal R}(\Theta,\Theta_i)\Theta,\Theta_j)g({\mathcal R}(\Theta,\Theta_l)\Theta,\Theta_k) + \frac{1}{9}\left(\Ric(\Theta,\Theta)\right)^{2}\Bigg]\rho^2 \nonumber\\ 
      & + \cO_{p}(\rho^3)+  \rho \cL_{p}^{(1)}(w) +  \cQ_{p}^{(2)(1)}(w) +  \rho^{-2} \cQ_{p}^{(3)(2)}(w). \nonumber
\end{align}

The determinant of the first fundamental form will be useful to compute integrals on $S_{p,\rho}(w)$ and can be expanded as:

\begin{align}\label{eq:Expdetg}
 \det\mathring{g} =& \,  \sin^2{\theta^1}\rho^4\Bigg[(1 - w)^4 + g_{{\mathbb S}^2}^{ij}w_iw_j - \frac{1}{3}\Ric(\Theta,\Theta)(1 - w)^6\rho^2 \nonumber\\
  &+ \frac{1}{6}g_{{\mathbb S}^2}^{ij}g(\nabla_{\Theta}{\mathcal R}(\Theta,\Theta_i)\Theta,\Theta_j)(1 - w)^7\rho^3 + \frac{1}{20}g_{{\mathbb S}^2}^{ij}g(\nabla^{2}_{\Theta \Theta}{\mathcal R}(\Theta,\Theta_i)\Theta,\Theta_j)(1 - w)^8\rho^4 \nonumber\\
  &+ \frac{2}{45} \delta^{\mu\nu} g_{{\mathbb S}^2}^{ij} g({\mathcal R}(\Theta,\Theta_i)\Theta,E_{\mu})g({\mathcal R}(\Theta,\Theta_j)\Theta,E_{\nu})(1 - w)^8\rho^4 \\
   & + \frac{1}{9}g({\mathcal R}(\Theta,\Theta_1)\Theta,\Theta_1)g({\mathcal R}(\Theta,\Bar{\Theta}_2)\Theta,\Bar{\Theta}_2)(1 - w)^8\rho^4 - \frac{1}{9}g({\mathcal R}(\Theta,\Theta_1)\Theta,\Bar{\Theta}_2)^2(1 - w)^8\rho^4\Bigg]  \nonumber  \\ 
   &+ \cO_{p}(\rho^9) + \rho^9 \cL_{p}^{(0)}(w) + \rho^6 \cQ_{p}^{(2)(1)}(w) + \rho^4 \cQ_{p}^{(4)(1)}(w). \nonumber
\end{align}

\section{Hawking mass of an optimally perturbed geodesic sphere}

Motivated by Proposition \ref{prop:SigmaSprhow} in the appendix, it is natural to choose the class of perturbed geodesic spheres $S_{p,\rho}(w)$ as surfaces to ``test'' the positivity of the Hawking mass. 
More precisely, by using the Euler-Lagrange equation of the Willmore functional (under area constraint), we will find an expansion of the perturbation $w$ which is necessary for the perturbed geodesic sphere $S_{p,\rho}(w)$ to be a critical point (under area constraint). Let us stress that in Lemma \ref{lem:estw} we do not claim to construct area-constrained Willmore surfaces centred at any point (that would be false, as a necessary condition would be that such a point is a critical point for the scalar curvature; see  \cite{LammMetzger2013, MondinoLaurain}).  The goal of Lemma \ref{lem:estw} is thus merely to suggest an \emph{ansatz} for an  ``optimal'' perturbation (so the reader could take for granted that a convenient choice of perturbation $w$ is given by the expansion \eqref{eq:estw}).  Such ``optimally'' perturbed geodesic spheres will be the key geometric objects to prove our main theorems. Throughout the section, $(M,g)$ will be an arbitrary Riemannian 3-manifold.

\subsection{An optimal pertubation}
In this subsection we compute the expansion (as $\rho\to 0$)  that a perturbation $w$ has to satisfy if  the perturbed geodesic sphere $S_{p,\rho}(w)$ is a critical point (under area constraint) of the Willmore functional or, equivalently, of the Hawking mass.

\begin{lemma}\label{lem:estw}
For a fixed a compact subset ${\mathcal K}\Subset M \setminus \partial M$, there exists $\rho_{0}>0, \,r>0$ and a map $w_{(\cdot, \cdot)}: {\mathcal K}\times (0,\rho_{0}]\to C^{4,\alpha}({\mathbb S}^{2})^{\perp}$, $(p,\rho)\mapsto w_{p,\rho}$ such that if $S_{p,\rho}(w)$ is a critical point of the Willmore functional under area constraint (or equivalently, a critical point of the Hawking mass under area constraint) with $(p,\rho,w)\in {\mathcal K}\times (0,\rho_{0}] \times B_{C^{4,\alpha}({\mathbb S}^{2})^{\perp}}(0,r)$ then $w=w_{p,\rho}$. Moreover, for every $(p,\rho)\in {\mathcal K}\times (0,\rho_{0}]$ the following expansion holds:
\begin{equation}\label{eq:estw}
    w_{p,\rho}(\Theta) = \Big(-\frac{1}{6}\Ric(\Theta,\Theta) + \frac{1}{18}\Sc(p)\Big)\rho^2 + \cO_{p}(\rho^3),
\end{equation}
where $\Theta\in {\mathbb S}^{2}$ is the parametrising coordinate, and $\limsup_{\rho\to 0}  \rho^{-3} \, \left \| \cO_{p}(\rho^{3}) \right \|_{C^{4,\alpha}({\mathbb S}^{2})}<\infty$.

\end{lemma}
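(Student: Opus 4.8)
The plan is to follow the perturbative scheme of Pacard \cite{Pacard} (as implemented in \cite{Mondino}): one characterises the perturbations $w$ for which $S_{p,\rho}(w)$ is area-constrained Willmore as the zeros of a nonlinear operator on ${\mathbb S}^2$ whose linearisation at the round sphere is the fourth order operator $\Delta_{{\mathbb S}^2}(\Delta_{{\mathbb S}^2}+2)$, solves this equation by the implicit function theorem, and then reads off \eqref{eq:estw} by matching powers of $\rho$. First I would recall that, with the convention $H=k_1+k_2$, the surface $S_{p,\rho}(w)$ is a critical point of the Willmore functional under an area constraint if and only if
\[
\Delta_{\mathring{g}}H + H\,\abs{\mathring{h}}^2_{\mathring{g}} + H\,\Ric(N,N) = \lambda\,H
\]
for a Lagrange multiplier $\lambda\in\R$, where $\mathring{h}$ is the trace-free second fundamental form and $N$ the unit normal; for a genuine critical point $\lambda$ is uniquely determined by $(p,\rho,w)$ (for instance by pairing the identity with $H$ and integrating), so we may treat it as a function of $(p,\rho,w)$ which, after the normalisation below, extends smoothly to $\rho=0$.

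Substituting the expansions \eqref{eq:ExpH}, \eqref{eq:ExpH2}, \eqref{eq:ExpgijInv}, \eqref{eq:Exphij}, \eqref{eq:Expdetg} of Section~\ref{SS:GeomExp} (together with the induced expansions of $\Delta_{\mathring g}$, of $\abs{\mathring h}^2_{\mathring g}$ and of $\Ric(N,N)$ along $S_{p,\rho}(w)$) into this identity and multiplying by $\rho^{3}$, one obtains after rearrangement an equation of the schematic form
\[
\Phi(p,\rho,w):=\Delta_{{\mathbb S}^2}(\Delta_{{\mathbb S}^2}+2)w + \rho^{2}\big(2\Ric(\Theta,\Theta)-\tfrac13\Delta_{{\mathbb S}^2}\Ric(\Theta,\Theta)-2\lambda\big) + \cQ_p^{(2)(4)}(w) + \rho\,\cL_p^{(4)}(w) + \cO_p(\rho^{3}) = 0,
\]
the essential structural point being that every contribution other than $\Delta_{{\mathbb S}^2}(\Delta_{{\mathbb S}^2}+2)w$ is either at least quadratic in $w$ (hence with vanishing differential at $w=0$) or carries a positive power of $\rho$, and that all the error and nonlinear terms fall, with the correct powers of $\rho$, into the classes $\cL_p^{(\cdot)}$, $\cQ_p^{(\cdot)(\cdot)}$ and $\cO_p(\cdot)$ of Section~\ref{SS:GeomExp} with the estimates \eqref{eq:estQ}. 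Viewed as a map $\mathcal{K}\times[0,\rho_0]\times B_{C^{4,\alpha}({\mathbb S}^2)^{\perp}}(0,r)\to C^{0,\alpha}({\mathbb S}^2)^{\perp}$ (the $\ell=0$ component of the range being absorbed into the choice of $\lambda$), $\Phi$ is then of class $C^{1}$ with $\Phi(p,0,0)=0$ and $D_w\Phi(p,0,0)=\Delta_{{\mathbb S}^2}(\Delta_{{\mathbb S}^2}+2)$. Since this last operator is an isomorphism of $C^{4,\alpha}({\mathbb S}^2)^{\perp}$ onto $C^{0,\alpha}({\mathbb S}^2)^{\perp}$ — by construction of the spaces its kernel $\Ker[\Delta_{{\mathbb S}^2}(\Delta_{{\mathbb S}^2}+2)]$ (spanned by the constants and the first spherical harmonics) has been quotiented out, on the orthogonal complement of which the spectrum is bounded away from $0$, while standard Schauder estimates give continuity of the inverse — the implicit function theorem, applied uniformly over the compact set $\mathcal{K}$, produces $\rho_0>0$, $r>0$ and a $C^{1}$ map $(p,\rho)\mapsto w_{p,\rho}\in C^{4,\alpha}({\mathbb S}^2)^{\perp}$ with $w_{p,0}=0$, $\Phi(p,\rho,w_{p,\rho})=0$, and unique among zeros of $\Phi(p,\rho,\cdot)$ in $B_{C^{4,\alpha}({\mathbb S}^2)^{\perp}}(0,r)$. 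As any area-constrained Willmore $S_{p,\rho}(w)$ with $(p,\rho,w)$ in this range satisfies $\Phi(p,\rho,w)=0$, this gives the first assertion of the Lemma.

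For the expansion \eqref{eq:estw}: since every $\rho$-dependent term of $\Phi$ vanishes together with its $\rho$-derivative at $\rho=0$, differentiating $\Phi(p,\rho,w_{p,\rho})=0$ in $\rho$ at $\rho=0$ and using injectivity of $D_w\Phi(p,0,0)$ gives $\partial_\rho w_{p,0}=0$, hence $w_{p,\rho}=\cO_p(\rho^{2})$. Writing $w_{p,\rho}=\rho^{2}\omega_p+\sigma_{p,\rho}$ and collecting the $\rho^{2}$-terms of $\Phi(p,\rho,w_{p,\rho})=0$ yields
\[
\Delta_{{\mathbb S}^2}(\Delta_{{\mathbb S}^2}+2)\omega_p + 2\Ric(\Theta,\Theta) - \tfrac13\Delta_{{\mathbb S}^2}\Ric(\Theta,\Theta) - 2\lambda_0 = 0,
\]
where $\lambda_0$ is the leading coefficient of $\lambda$. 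Now $\Theta\mapsto\Ric(\Theta,\Theta)$ is the restriction to ${\mathbb S}^2$ of a homogeneous quadratic polynomial, hence lies in the $\ell=0$ and $\ell=2$ eigenspaces of $\Delta_{{\mathbb S}^2}$, with $\ell=0$ part $\tfrac13\Sc_p$ and $\ell=2$ part the trace-free Ricci form $S(\Theta,\Theta)=\Ric(\Theta,\Theta)-\tfrac13\Sc_p$; in particular it has no $\ell=1$ component, which is why the potential $\ell=1$ obstruction is vacuous at this order (at higher orders its analogue reproduces the reduced equation $\nabla\Sc(p)=0$, irrelevant here). Projecting the displayed identity onto $\ell=0$ forces $\lambda_0=\tfrac13\Sc_p$; projecting onto $\ell=2$, where $\Delta_{{\mathbb S}^2}=-6$ and hence $\Delta_{{\mathbb S}^2}(\Delta_{{\mathbb S}^2}+2)=24$, gives $24\,\omega_p=\tfrac13(-6)S(\Theta,\Theta)-2S(\Theta,\Theta)=-4\,S(\Theta,\Theta)$, i.e. $\omega_p=-\tfrac16\Ric(\Theta,\Theta)+\tfrac1{18}\Sc_p$, exactly as in \eqref{eq:estw}. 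Finally $\sigma_{p,\rho}$ solves $\Delta_{{\mathbb S}^2}(\Delta_{{\mathbb S}^2}+2)\sigma_{p,\rho}=\cO_p(\rho^{3})+(\text{terms at least linear in }\sigma_{p,\rho}\text{ with coefficients }\cO(\rho))$, so Schauder estimates for $\Delta_{{\mathbb S}^2}(\Delta_{{\mathbb S}^2}+2)$ on $C^{4,\alpha}({\mathbb S}^2)^{\perp}$ together with absorption of the small $\sigma_{p,\rho}$-terms give $\norm{\sigma_{p,\rho}}_{C^{4,\alpha}({\mathbb S}^2)}\le C_p\,\rho^{3}$ uniformly for $p\in\mathcal{K}$, which is the stated remainder estimate.

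The main obstacle is the bookkeeping behind the normalised equation: extracting it from the rather lengthy expansions of Section~\ref{SS:GeomExp} while verifying that $D_w\Phi(p,0,0)$ is exactly $\Delta_{{\mathbb S}^2}(\Delta_{{\mathbb S}^2}+2)$, that all remaining terms fit the classes $\cL_p^{(\cdot)}$, $\cQ_p^{(\cdot)(\cdot)}$, $\cO_p(\cdot)$ with the right powers of $\rho$ (so that $\Phi$ is genuinely $C^{1}$ up to $\rho=0$, despite the negative powers of $\rho$ appearing in the raw expansions), and that the Lagrange multiplier is eliminated cleanly; the $\ell=1$ ``recentering'' mode, which in general constructions of area-constrained Willmore surfaces produces an obstruction, is here harmless precisely because the source term at the relevant order is a pure quadratic form in $\Theta$ and thus $\ell=1$-free.
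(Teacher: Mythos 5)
Your argument is correct and follows essentially the same route as the paper's proof: reduce the area-constrained Euler--Lagrange equation for $S_{p,\rho}(w)$ to a quasilinear fourth-order equation on ${\mathbb S}^2$ with linear part $\Delta_{{\mathbb S}^2}(\Delta_{{\mathbb S}^2}+2)$, solve it on $C^{4,\alpha}({\mathbb S}^2)^\perp$ for small $\rho$ by an inverse/implicit-function argument (the paper phrases this equivalent step as a contraction-mapping fixed point, citing \cite[Lemma 4.4]{Mondino2}), and read off \eqref{eq:estw} from the $\rho^2$ coefficient using that $\Ric(\Theta,\Theta)-\tfrac13\Sc(p)$ is an eigenfunction of $\Delta_{{\mathbb S}^2}$ with eigenvalue $-6$. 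The only notational discrepancy is the factor of $2$ between your $\lambda_0=\tfrac13\Sc_p$ and the paper's choice $\lambda=\tfrac23\Sc(p)$, which comes from the coefficient $2$ in the paper's form \eqref{eq:10} of the Willmore equation and is otherwise inconsequential.
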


\begin{proof}

\textbf{Step 1}: We show that, for every  fixed  compact subset ${\mathcal K}\Subset M$, there exists $\rho_{0}>0, \,r>0$ and a map $w_{(\cdot, \cdot)}: {\mathcal K}\times (0,\rho_{0}]\to C^{4,\alpha}({\mathbb S}^{2})^{\perp}$, $(p,\rho)\mapsto w_{p,\rho}$ such that if $S_{p,\rho}(w)$ is a critical point of the Willmore functional under area constraint (or equivalently, a critical point of the Hawking mass under area constraint) with $(p,\rho,w)\in {\mathcal K}\times (0,\rho_{0}] \times B(0,r)$ then $w=w_{p,\rho}$ and  $\lim_{\rho\to 0} \|w_{p,\rho}\|_{C^{4,\alpha}({\mathbb S}^{2})}=0$.

If $S_{p,\rho}(w)$ is a critical point of the Willmore functional under area constraint then it satisfies  the area-constrained Euler-Lagrange equation for the Willmore functional  (see for instance \cite{Lamm} for a derivation of the formula):
\begin{equation}\label{eq:10}
    2\Delta H + H(H^2 - 4D + 2\Ric(N,N)) = \lambda H
\end{equation}
where $\lambda \in \R$ plays the role of Lagrange multiplier and $N$ is the inward pointing unit normal vector. As proved in \cite[Lemma 2.2]{MondinoLaurain}, the Lagrange multiplier $\lambda$ in \eqref{eq:10} remains bounded under the assumptions of the lemma. Using the geometric expansions of Section \ref{SS:GeomExp}, one can check that \eqref{eq:10} for a perturbed geodesic sphere $S_{p,\rho}(w)$ gives (see for instance \cite[Proposition 3.2]{Mondino2})
\begin{equation}\label{eq:ExpW'=0}
\Delta_{{\mathbb S}^{2}}(\Delta_{{\mathbb S}^{2}}+2)w+\cO_{p}(\rho^{2})+ \rho^{2} \cL_{p}^{(4)}(w)+\cQ_{p}^{(2)(4)}(w)=0.
\end{equation}
In particular, setting $P:L^{2}({\mathbb S}^{2})\to \Ker[\Delta_{{\mathbb S}^{2}}(\Delta_{{\mathbb S}^{2}} +2)]^{\perp}$ the orthogonal projection, a fortiori \eqref{eq:ExpW'=0} yields
\begin{equation} \label{eq:diffw}
P \Big[  \Delta _{{\mathbb S}^2}(\Delta _{{\mathbb S}^2} + 2 ) w +\cO_{p}(\rho^2) +\rho^{2} \cL_{p}^{(4)}(w)+ \cQ_{p}^{(2)(4)}(w) \Big] =0.
\end{equation}
Since the operator  $ \Delta _{{\mathbb S}^2}(\Delta_{{\mathbb S}^2} + 2 )$ is invertible on the space orthogonal to its Kernel and $w \in C^{4,\alpha}({\mathbb S}^2)^\perp=\Ker[ \Delta _{{\mathbb S}^2}(\Delta _{{\mathbb S}^2} + 2 )]^\perp \cap C^{4,\alpha}({\mathbb S}^2)$, setting 
$$K:= [\Delta _{{\mathbb S}^2}(\Delta_{{\mathbb S}^2} + 2 )]^{-1}: \Ker[ \Delta_{{\mathbb S}^2}(\Delta_{{\mathbb S}^2} + 2 )]^\perp\subseteq L^2({\mathbb S}^2) \to  \Ker[ \Delta_{{\mathbb S}^2}(\Delta_{{\mathbb S}^2} + 2 )]^\perp,$$ 
 equation \eqref{eq:diffw} is equivalent to the fixed point problem
\begin{equation} \label{eq:puntofissow}
 w = K\circ P \left[\cO_{p}(\rho^2) +\rho^{2} \cL_{p}^{(4)}(w)+ \cQ_{p}^{(2)(4)}(w)\right]=:F_{p,\rho} (w).
\end{equation}
Using Schauder estimates one can check that,  for every fixed compact set ${\mathcal K} \Subset M$, there exist $\rho_0>0$ and $r>0$ such that for all $p \in {\mathcal K}$ and $\rho\in [0,\rho_0]$ the map
$$F_{p,\rho}:B(0,r)\subset C^{4,\a}({\mathbb S}^2)^\perp \to C^{4,\a}({\mathbb S}^2)^\perp$$
is a contraction (see \cite[Lemma 4.4]{Mondino2} for the details). Thus, for every  $p \in {\mathcal K}$ and $\rho\in (0,\rho_0]$ there exists a unique $w_{p,\rho}\in B(0,r)\subset C^{4,\a}({\mathbb S}^2)^\perp$ such that  the surface $S_{p,\rho}(w_{p,\rho})$ is an area-constrained Willmore surface. By continuous dependence on parameters of fixed points in contractions, it also follows that  $\lim_{\rho\to 0} \|w_{p,\rho}\|_{C^{4,\alpha}({\mathbb S}^{2})}=0$ (again, see \cite[Lemma 4.4]{Mondino2} for the details).
\\

\textbf{Step 2}: we show that  the expansion \eqref{eq:estw} holds. 
\\We set the ansatz 
\begin{equation}\label{eq:ansatzw}
w_{p,\rho}=\rho^{2} \bar{w}_{p}+\cO_{p}(\rho^{3})
\end{equation}
 where $\bar{w}_{p}\in  C^{4,\a}({\mathbb S}^2)^\perp$ depends on $p$ but not on $\rho$ and $\limsup_{\rho\to 0} \rho^{-3} \left\| \cO_{p}(\rho^{3}) \right \|_{ C^{4,\a}({\mathbb S}^2)}<\infty$. In order to show that $w_{p,\rho}$ given in step 1 satisfies the ansatz \eqref{eq:ansatzw} and the expansion  \eqref{eq:estw}, we need to improve the expansion of the Euler-Lagrange equation of the Willmore functional \eqref{eq:10}-\eqref{eq:ExpW'=0}.
 To this aim, using the expansions of Section \ref{SS:GeomExp}, one can check that (see the proof of \cite[Proposition 3.9]{Mondino} for more details)
 \begin{align*}
 \Delta_{S_{p,\rho}(w)} H&= \frac{1}{\rho^{3}} \Delta_{{\mathbb S}^{2}}(\Delta_{{\mathbb S}^{2}}+2 ) w-\frac{1}{3\rho}  \Delta_{{\mathbb S}^{2}} \Ric(\Theta, \Theta) + \cO_{p}(\rho^{0})+\frac{1}{\rho^{2}} \cL_{p}^{(4)}(w)+\frac{1}{\rho^{3}} \cQ_{p}^{(2)(4)}(w),\\
 H^{2}-4D&=  \cO_{p}(\rho^{2})+ \cL_{p}^{(2)}(w)+\frac{1}{\rho^{2}} \cQ_{p}^{(2)(2)}(w),
 \end{align*}
 which, plugged into \eqref{eq:10}, give
\begin{equation}\label{eq:26}
    \Delta_{{\mathbb S}^2}(\Delta_{{\mathbb S}^2} + 2)w = \Big( \frac{1}{3}\Delta_{{\mathbb S}^2}\Ric(\Theta,\Theta) - 2\Ric(\Theta,\Theta) + \lambda \Big)\rho^2 + \cO_{p}(\rho^3) + \rho \cL_{p}^{(4)}(w) + \cQ_{p}^{(2)(4)}(w).
\end{equation}
Inserting  the ansatz \eqref{eq:ansatzw}, that we write  as $w = \bar{w}\rho^2 + \cO_{p}(\rho^3)$, in \eqref{eq:26} yields
\begin{equation}\label{eq:34}
    \Delta_{{\mathbb S}^2}(\Delta_{{\mathbb S}^2} + 2)\bar{w}  =  \frac{1}{3}\Delta_{{\mathbb S}^2}\Ric(\Theta,\Theta) - 2\Ric(\Theta,\Theta) + \lambda.
\end{equation}
We solve this PDE by Fourier methods, using the knowledge of the eigenfunctions of $\Delta_{{\mathbb S}^2}$.  To this aim, writing the radial unit vector as $\Theta = x^{\mu}E_{\mu} \in T_pM$ where $\{E_{\mu}\}_{\mu=1,2,3}$ is an orthonormal basis of $T_{p}M$, we have
\begin{align*}
    \Ric(\Theta,\Theta) &= \Ric(x^{\mu}E_{\mu},x^{\nu}E_{\nu}) = R_{\mu\nu}x^{\mu}x^{\nu} = \sum_{\mu \neq \nu}R_{\mu\nu}x^{\mu}x^{\nu} + \sum_{\mu}R_{\mu\mu}x^{\mu}x^{\mu} \\ &= \sum_{\mu \neq \nu}R_{\mu\nu}x^{\mu}x^{\nu} + \sum_{\mu}R_{\mu\mu}\Big((x^{\mu})^2 - \frac{1}{3}\Big) + \frac{1}{3}\sum_{\mu}R_{\mu\mu}, 
  \end{align*}
where $\sum_{\mu}R_{\mu\mu} = \Sc(p)$ is the scalar curvature at $p$ and where we used that $\sum_{\mu} (x^{\mu})^{2}=1$, since $\Theta\in {\mathbb S}^{2}$.
\\Recall that the eigenfunctions of $\Delta_{{\mathbb S}^{2}}$ relative to the second eigenvalue $\lambda_{2}=-6$ are $x^{\mu}x^{\nu}$, $\mu\neq \nu$, and $(x^{\mu})^{2}-(x^{\nu})^{2}$, $\mu\neq \nu$, hence
\begin{equation*}
    (x^1)^2 - \frac{1}{3} = \frac{1}{3}\Big([(x^1)^2 - (x^2)^2] + [(x^1)^2 - (x^3)^2]\Big)
\end{equation*}
is an element of the eigenspace relative to  $\lambda_{2}=-6$  (and analogously for the others $(x^{\mu})^{2}$).
Therefore, 
\begin{equation*}
 \Ric(\Theta,\Theta) - \frac{1}{3}\Sc(p) = \sum_{\mu \neq \nu}R_{\mu\nu}x^{\mu}x^{\nu} + \sum_{\mu}R_{\mu\mu}\Big((x^{\mu})^2 - \frac{1}{3}\Big) 
\end{equation*}
is an eigenfunction of $\Delta_{{\mathbb S}^{2}}$ with eigenvalue $-6$.  We can then rewrite equation \eqref{eq:34} as
\begin{align*}
    \Delta_{{\mathbb S}^2}(\Delta_{{\mathbb S}^2} + 2) \bar{w}  &=  \frac{1}{3}\Delta_{{\mathbb S}^2} \left(\Ric(\Theta,\Theta) - \frac{1}{3}\Sc(p)\right) - 2\Ric(\Theta,\Theta) + \lambda  \\
     &= - 4\left(\Ric(\Theta,\Theta) - \frac{1}{3}\Sc(p)\right) - \frac{2}{3}\Sc(p) + \lambda.
\end{align*}
Setting the value of the Lagrange multiplier as  $\lambda = \frac{2}{3}\Sc(p)$ and noting that
\begin{equation*}
    \Delta_{{\mathbb S}^2}(\Delta_{{\mathbb S}^2} + 2)\left[-\frac{1}{6} \left(\Ric(\Theta,\Theta) - \frac{1}{3}\Sc(p)\right)\right] = -4\left(\Ric(\Theta,\Theta) - \frac{1}{3}\Sc(p)\right)
\end{equation*}
we conclude that
\begin{equation}\label{eq:valuebarw}
 \bar{w} = -\frac{1}{6}\Ric(\Theta,\Theta) + \frac{1}{18}\Sc(p).
\end{equation}
Summarising, we showed that $w_{p,\rho}$ as in the ansatz \eqref{eq:ansatzw} with $\bar{w}_{p}=\bar{w}$ given in \eqref{eq:valuebarw} solves \eqref{eq:26} or, equivalently, \eqref{eq:10}. The proof is complete  once we recall that, by step 1, the solution  $w$ of  \eqref{eq:26} is unique provided $\|w\|_{C^{4,\alpha}({\mathbb S}^{2})}<r$.
\end{proof}

 \subsection{Computation of the Hawking mass}

\begin{prop}\label{prop:expHawk}
Let $S_{p,\rho}(w_{p,\rho})$ we an optimally perturbed geodesic sphere, i.e. $w_{p,\rho}$ is given in  \eqref{eq:estw}. Then the Hawking mass of  $S_{p,\rho}(w_{p,\rho})$ has the following expansion:
\begin{align}
    m_H(S_{p,\rho}(w_{p,\rho})) &:= \sqrt{\frac{|S_{p,\rho}(w_{p,\rho})|}{(16\pi)^3}}\Big(16\pi - W(S_{p,\rho}(w_{p,\rho}))\Big) \nonumber \\ 
    &=\frac{1}{12} \Sc_p\rho^{3} +  \left(\frac{1}{120} \Delta \Sc(p) + \frac{1}{90}\norm{S_p}^2 - \frac{1}{144}\Sc_p^2  \right)  \rho^5 + \cO_{p}(\rho^6),  \label{eq:mHSprhow}
\end{align}
where $\Sc$ is the scalar curvature and $S:= \Ric-\frac{1}{3} \Sc\; g$ is the traceless Ricci tensor.
\end{prop}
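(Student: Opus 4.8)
The strategy is a direct but lengthy Taylor expansion of the two ingredients of the Hawking mass — the area $|S_{p,\rho}(w_{p,\rho})|$ and the Willmore energy $W(S_{p,\rho}(w_{p,\rho}))$ — in powers of $\rho$, using the geometric expansions \eqref{eq:Expgij}--\eqref{eq:Expdetg} of Section \ref{SS:GeomExp} together with the optimal perturbation \eqref{eq:estw}. The crucial structural simplification is that $w_{p,\rho}$ was built so that, at leading order, $w_{p,\rho}(\Theta) = (-\tfrac16 \Ric(\Theta,\Theta) + \tfrac{1}{18}\Sc_p)\rho^2 + \cO_p(\rho^3)$ lies in the second eigenspace of $\Delta_{{\mathbb S}^2}$ (eigenvalue $-6$); this kills the troublesome term $(2+\Delta_{{\mathbb S}^2})w$ in the expansion \eqref{eq:ExpH} of the mean curvature (since $(2+\Delta_{{\mathbb S}^2})$ annihilates the $\ell=1$ modes but here $w$ has no $\ell\le 1$ component and the $\ell=2$ part produces a clean multiple of $w$), so that the first-order-in-$w$ contributions to $H$ are explicitly computable rather than a generic $\cL_p^{(1)}(w)$.

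\textbf{Step 1: expand the area.} Using \eqref{eq:Expdetg} and $|S_{p,\rho}(w_{p,\rho})| = \int_{{\mathbb S}^2}\sqrt{\det\mathring g}\,d\theta^1 d\theta^2$, integrate term by term over ${\mathbb S}^2$. The key elementary integrals are $\int_{{\mathbb S}^2} 1 = 4\pi$, $\int_{{\mathbb S}^2}\Ric(\Theta,\Theta) = \tfrac{4\pi}{3}\Sc_p$, and the second-moment integrals of products of two copies of $\Ric(\Theta,\Theta)$ or of $\nabla^2_{\Theta\Theta}{\mathcal R}$, which reduce to contractions like $\|\Ric\|^2$, $\Sc^2$, $\Delta\Sc$ via the standard formulas for $\int_{{\mathbb S}^2} x^\mu x^\nu x^\alpha x^\beta$. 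The output will be $|S_{p,\rho}(w_{p,\rho})| = 4\pi\rho^2\big(1 - \tfrac{1}{9}\Sc_p\rho^2 + (\text{quartic curvature terms})\rho^4 + \cO_p(\rho^5)\big)$; I would compute this carefully up to and including the $\rho^6$ coefficient of $|\Sigma|$ (i.e. the $\rho^4$ bracket) so that $\sqrt{|\Sigma|/(16\pi)^3}$ is known to the needed order.

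\textbf{Step 2: expand the Willmore energy.} Write $W = \int_{{\mathbb S}^2} H^2 \sqrt{\det\mathring g}\, d\theta^1 d\theta^2$ and multiply the expansion \eqref{eq:ExpH2} of $H^2$ (with $w=w_{p,\rho}$ substituted) by the expansion \eqref{eq:Expdetg} of $\sqrt{\det\mathring g}$. The leading term is $\int 4\rho^{-2}\cdot\sin^2\theta^1\rho^2 \cdot (\cdots) = 16\pi + (\text{corrections})$. Here one must be careful: the $w$-dependent terms in $H^2$ at order $\rho^{-2}$, namely $4(2+\Delta_{{\mathbb S}^2})w + 8w(w+\Delta_{{\mathbb S}^2}w) + ((2+\Delta_{{\mathbb S}^2})w)^2$, after integration against the area element contribute at order $\rho^0$ and $\rho^2$; the linear term $\int_{{\mathbb S}^2}(2+\Delta_{{\mathbb S}^2})w = 0$ by self-adjointness and $\int w = 0$, and the quadratic terms, evaluated on the explicit eigenfunction $\bar w$, collapse to numerical multiples of $\int_{{\mathbb S}^2}\bar w^2 = c\,\|S_p\|^2$ — this is precisely where the $\tfrac{1}{90}\|S_p\|^2$ term is born. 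One then collects all $\rho^0$ and $\rho^2$ contributions to $W$: the $\rho^0$ ones must sum to $16\pi$ (a useful consistency check), the $\rho^2$ ones give $-\tfrac{8\pi}{3}\cdot(\tfrac{?}{?})$, and the $\rho^4$ ones — coming from the $\rho^2$ bracket of \eqref{eq:ExpH2} integrated, from cross terms, and from the $w$-quadratic terms at the next order — give the full $\rho^4$ coefficient of $W$. The nontrivial curvature identities needed are $\int_{{\mathbb S}^2} g(\nabla^2_{\Theta\Theta}{\mathcal R}(\Theta,\Theta_i)\Theta,\Theta_j)g_{{\mathbb S}^2}^{ij} = (\text{multiple of }\Delta\Sc)$ and $\int_{{\mathbb S}^2}(\Ric(\Theta,\Theta))^2 = \tfrac{4\pi}{15}(\Sc^2 + 2\|\Ric\|^2)$ and similar, together with contracted second Bianchi identities relating $\Delta\Sc$ to $\mathrm{div}\,\mathrm{div}\,\Ric$.

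\textbf{Step 3: assemble and convert to $\|S_p\|^2$.} Plug $16\pi - W$ and $\sqrt{|\Sigma|/(16\pi)^3}$ into \eqref{eq:defHaw}, expand the product, and collect powers of $\rho$. The $\rho^3$ term is $\tfrac{1}{12}\Sc_p$ (matching the known geodesic-sphere computation and Wiygul's result). The $\rho^5$ term is a combination of $\Delta\Sc(p)$, $\|\Ric_p\|^2$, and $\Sc_p^2$; using $\|S_p\|^2 = \|\Ric_p\|^2 - \tfrac13\Sc_p^2$ (recorded in the Preliminaries) one rewrites it as $\tfrac{1}{120}\Delta\Sc(p) + \tfrac{1}{90}\|S_p\|^2 - \tfrac{1}{144}\Sc_p^2$. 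The $\cO_p(\rho^6)$ remainder is controlled because all the $\cL_p$, $\cQ_p$ and $\cO_p(\rho^d)$ error terms in the geometric expansions obey the stated uniform $C^k$ bounds, and $\|w_{p,\rho}\|_{C^{4,\alpha}} = \cO_p(\rho^2)$ by Lemma \ref{lem:estw}, so each error term integrated over ${\mathbb S}^2$ contributes at the claimed order or higher.

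\textbf{Main obstacle.} The essential difficulty is purely computational stamina and bookkeeping: one is expanding a product of two long series to three nontrivial orders, and the $\rho^4$ coefficient of $W$ involves many terms — the fourth-covariant-derivative curvature term, products of two curvature terms, the $\Hess_{{\mathbb S}^2}(w)$-times-curvature cross terms in \eqref{eq:ExpH}--\eqref{eq:ExpH2}, and the $w$-quadratic terms — each of which must be integrated over ${\mathbb S}^2$ using the correct spherical-moment identity. The single most delicate point is the emergence of the $\tfrac{1}{90}\|S_p\|^2$ term: it requires correctly tracking the interaction between the explicit form of the optimal perturbation $\bar w$ and the quadratic-in-$w$ pieces of both $H^2$ and $\det\mathring g$, and verifying that no other $\rho^5$ contribution to $m_H$ secretly contains a further multiple of $\|S_p\|^2$ that would alter the coefficient. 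A good internal check is that the $\rho^0$ terms in $W$ must cancel to exactly $16\pi$ and that, specialising to $w\equiv 0$ (i.e. plain geodesic spheres, which are \emph{not} optimal), the computation must reproduce the expansion $m_H(S_{p,\rho}) = \tfrac{1}{12}\Sc_p\rho^3 - (\tfrac{1}{144}\Sc_p^2 - \tfrac{1}{120}\Delta\Sc(p))\rho^5 + \cO_p(\rho^6)$ quoted in Section \ref{SS:someIdeas}; the difference between the two expansions is exactly the $\tfrac{1}{90}\|S_p\|^2\rho^5$ gain from the optimal perturbation, which is a reassuring sanity check on the sign and magnitude.
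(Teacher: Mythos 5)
Your plan follows the same route as the paper's proof of Proposition~\ref{prop:expHawk}: expand $\sqrt{\det\mathring g}$ and $H^2\sqrt{\det\mathring g}$ on the perturbed sphere using the expansions of Section~\ref{SS:GeomExp}, substitute the optimal $w=\bar w\rho^2+\cO_p(\rho^3)$ so that the eigenvalue relation $\Delta_{{\mathbb S}^2}\bar w=-6\bar w$ turns the quadratic-in-$w$ pieces into curvature contractions, integrate over ${\mathbb S}^2$ via the spherical moment formulas and the contracted Bianchi identity (the latter giving the $\Delta\Sc$ term), and finally assemble with the area factor, converting to $\|S_p\|^2$ via $\|S\|^2=\|\Ric\|^2-\tfrac13\Sc^2$. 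One arithmetic slip to correct before carrying through the bookkeeping: the area of the optimally perturbed sphere is $|S_{p,\rho}(w_{p,\rho})|=4\pi\rho^2\big(1-\tfrac{1}{18}\Sc_p\rho^2+\cO_p(\rho^4)\big)$, not with coefficient $-\tfrac19$; since this coefficient enters the $\rho^5$ term of $m_H$ through the cross term with the $\rho^2$ coefficient of $16\pi-W$, the slip would shift the final $\Sc_p^2$ coefficient away from $-\tfrac{1}{144}$ if left uncorrected.
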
 
 
 \begin{proof}
 In order to keep notation short, throughout the proof we will write $w$ in place of $w_{p,\rho}$.
 \smallskip
 
 \textbf{Step 1}: the Willmore functional integrand.
\\  To compute the Hawking mass, we find the expansion for the Willmore functional $W(S_{p,\rho}(w)) := \int_{S_{p,\rho}(w)}H^2\ dV = \int_{{\mathbb S}^2}H^2\sqrt{\det\mathring{g}}\ d\theta^1d\theta^2$. Using \eqref{eq:Expdetg}, the Taylor expansion $\sqrt{1+x} = 1 + \frac{x}{2} - \frac{x^2}{8} + \cO(x^3)$ and the fact that $w = \cO_{p}(\rho^2)$, we have:
\begin{align}\label{eq:61}
    \sqrt{\det\mathring{g}} &= \sin{\theta^1}\rho^2\Bigg[(1-w)^2 + \frac{1}{2}g_{{\mathbb S}^2}^{ij}w_iw_j - \frac{1}{6}\Ric(\Theta,\Theta)\rho^2 + \frac{2}{3}w\Ric(\Theta,\Theta)\rho^2 \nonumber\\
    &\quad+ \frac{1}{12}g_{{\mathbb S}^2}^{ij}g(\nabla_{\Theta}{\mathcal R}(\Theta,\Theta_i)\Theta,\Theta_j)\rho^3 + \frac{1}{40}g_{{\mathbb S}^2}^{ij}g(\nabla_{\Theta \Theta}^2{\mathcal R}(\Theta,\Theta_i)\Theta,\Theta_j)\rho^4 \\ 
    &\quad+ \frac{1}{45}\delta^{\mu\nu} g_{{\mathbb S}^2}^{ij} g({\mathcal R}(\Theta,\Theta_i)\Theta,E_{\mu})g({\mathcal R}(\Theta,\Theta_j)\Theta,E_{\nu})\rho^4 - \frac{1}{18}g({\mathcal R}(\Theta,\Theta_1)\Theta,\Bar{\Theta}_2)^2\rho^4 \nonumber\\
    &\quad+ \frac{1}{18}g({\mathcal R}(\Theta,\Theta_1)\Theta,\Theta_1)g({\mathcal R}(\Theta,\Bar{\Theta}_2)\Theta,\Bar{\Theta}_2)\rho^4 - \frac{1}{72}\Ric(\Theta,\Theta)^2\rho^4\Bigg] + \cO_{p}(\rho^7)\nonumber.
\end{align}
Multiplying (\ref{eq:61}) with \eqref{eq:ExpH2} we obtain the integrand of the Willmore functional evaluated on a perturbed geodesic sphere: 
\begin{align*}
    H^2\sqrt{\det\mathring{g}} =& \sin{\theta^1}\Bigg[\Bigg[4 + 4\Delta_{{\mathbb S}^2}w + 4w\Delta_{{\mathbb S}^2}w + (\Delta_{{\mathbb S}^2}w)^2 + 2g_{{\mathbb S}^2}^{ij}w_iw_j\Bigg] \\ &+\Bigg[\frac{2}{3}w_kg_{{\mathbb S}^2}^{ij}g_{{\mathbb S}^2}^{kn}g_{{\mathbb S}^2}^{ml}g({\mathcal R}(\Theta,\Theta_n)\Theta,\Theta_m)\Big(\partial_ig^{{\mathbb S}^2}_{jl} + \partial_jg^{{\mathbb S}^2}_{il} - \partial_lg^{{\mathbb S}^2}_{ij}\Big) \\
     &-\frac{2}{3}w_kg_{{\mathbb S}^2}^{ij}g_{{\mathbb S}^2}^{kl}\Big(\partial_ig({\mathcal R}(\Theta,\Theta_j)\Theta,\Theta_l) + \partial_jg({\mathcal R}(\Theta,\Theta_i)\Theta,\Theta_l) - \partial_lg({\mathcal R}(\Theta,\Theta_i)\Theta,\Theta_j)\Big) \\ 
     &-\frac{4}{3}g_{{\mathbb S}^2}^{il}g_{{\mathbb S}^2}^{kj}g({\mathcal R}(\Theta,\Theta_l)\Theta,\Theta_k)(\Hess_{{\mathbb S}^2}(w))_{ij} - 2\Ric(\Theta,\Theta) + 4w\Ric(\Theta,\Theta) - \frac{4}{3}\Ric(\Theta,\Theta)\Delta_{{\mathbb S}^2}w\Bigg]\rho^2 \\ 
     &+\Bigg[\frac{4}{3}g_{{\mathbb S}^2}^{ij}g(\nabla_{\Theta}{\mathcal R}(\Theta,\Theta_i)\Theta,\Theta_j)\Bigg]\rho^3 \\ 
     &+\Bigg[\frac{1}{2}g_{{\mathbb S}^2}^{ij}g(\nabla_{\Theta \Theta}^2{\mathcal R}(\Theta,\Theta_i)\Theta,\Theta_j) + \frac{4}{9} \delta^{\mu\nu}g_{{\mathbb S}^2}^{ij}g({\mathcal R}(\Theta,\Theta_i)\Theta,E_{\mu})g({\mathcal R}(\Theta,\Theta_j)\Theta,E_{\nu}) \\ 
     &\quad- \frac{4}{9}g_{{\mathbb S}^2}^{il}g_{{\mathbb S}^2}^{kj}g({\mathcal R}(\Theta,\Theta_i)\Theta,\Theta_j)g({\mathcal R}(\Theta,\Theta_l)\Theta,\Theta_k) + \frac{2}{9}g({\mathcal R}(\Theta,\Theta_1)\Theta,\Theta_1)g({\mathcal R}(\Theta,\Bar{\Theta}_2)\Theta,\Bar{\Theta}_2) \\ &\quad- \frac{2}{9}g({\mathcal R}(\Theta,\Theta_1)\Theta,\Bar{\Theta}_2)^2 + \frac{5}{18}\Ric(\Theta,\Theta)^2\Bigg]\rho^4 + \cO_{p}(\rho^5)\Bigg].
\end{align*}
Inserting $w = \bar{w}\rho^2 + \cO_{p}(\rho^3)$ yields:
\begin{align}\label{eq:74}
    H^2\sqrt{\det\mathring{g}} =& \sin{\theta^1}\Bigg[4 + \Bigg[4\Delta_{{\mathbb S}^2}\bar{w} - 2\Ric(\Theta,\Theta)\Bigg]\rho^2 + \Bigg[\frac{4}{3}g_{{\mathbb S}^2}^{ij}g(\nabla_{\Theta}{\mathcal R}(\Theta,\Theta_i)\Theta,\Theta_j)\Bigg]\rho^3 \nonumber\\
    &\quad+ \Bigg[4\bar{w}\Delta_{{\mathbb S}^2}\bar{w} + (\Delta_{{\mathbb S}^2}\bar{w})^2 + 2g_{{\mathbb S}^2}^{ij}\bar{w}_i\bar{w}_j +4 \bar{w}\Ric(\Theta,\Theta) - \frac{4}{3}\Ric(\Theta,\Theta)\Delta_{{\mathbb S}^2}\bar{w} \nonumber\\ 
    &\quad+\frac{2}{3}\bar{w}_kg_{{\mathbb S}^2}^{ij}g_{{\mathbb S}^2}^{kn}g_{{\mathbb S}^2}^{ml}g({\mathcal R}(\Theta,\Theta_n)\Theta,\Theta_m)\Big(\partial_ig^{{\mathbb S}^2}_{jl} + \partial_jg^{{\mathbb S}^2}_{il} - \partial_lg^{{\mathbb S}^2}_{ij}\Big) \nonumber\\
     &\quad-\frac{2}{3}\bar{w}_kg_{{\mathbb S}^2}^{ij}g_{{\mathbb S}^2}^{kl}\Big(\partial_ig({\mathcal R}(\Theta,\Theta_j)\Theta,\Theta_l) + \partial_jg({\mathcal R}(\Theta,\Theta_i)\Theta,\Theta_l) - \partial_lg({\mathcal R}(\Theta,\Theta_i)\Theta,\Theta_j)\Big) \\ 
    &\quad-\frac{4}{3}g_{{\mathbb S}^2}^{il}g_{{\mathbb S}^2}^{kj}g({\mathcal R}(\Theta,\Theta_l)\Theta,\Theta_k)(\Hess_{{\mathbb S}^2}\bar{w})_{ij} - \frac{2}{9}g({\mathcal R}(\Theta,\Theta_1)\Theta,\Bar{\Theta}_2)^2 + \frac{5}{18}\Ric(\Theta,\Theta)^2 \nonumber\\ 
    &\quad- \frac{4}{9}g_{{\mathbb S}^2}^{il}g_{{\mathbb S}^2}^{kj}g({\mathcal R}(\Theta,\Theta_i)\Theta,\Theta_j)g({\mathcal R}(\Theta,\Theta_l)\Theta,\Theta_k) + \frac{2}{9}g({\mathcal R}(\Theta,\Theta_1)\Theta,\Theta_1)g({\mathcal R}(\Theta,\Bar{\Theta}_2)\Theta,\Bar{\Theta}_2) \nonumber\\
     &\quad+\frac{1}{2}g_{{\mathbb S}^2}^{ij}g(\nabla_{\Theta \Theta}^2{\mathcal R}(\Theta,\Theta_i)\Theta,\Theta_j) + \frac{4}{9} \delta^{\mu\nu}g_{{\mathbb S}^2}^{ij}g({\mathcal R}(\Theta,\Theta_i)\Theta,E_{\mu})g({\mathcal R}(\Theta,\Theta_j)\Theta,E_{\nu})\Bigg]\rho^4 + \cO_{p}(\rho^5)\Bigg]\nonumber
\end{align}
%

 \textbf{Step 2}: simplifying the Willmore functional integrand.
\\ We will use the following computations for the derivatives of $\bar{w}= -\frac{1}{6}\Ric(\Theta,\Theta) + \frac{1}{18}\Sc(p)$:
\begin{align}\label{eq:63}
    \bar{w}_k &:= \partial_k\left(-\frac{1}{6}\Ric(\Theta,\Theta) + \frac{1}{18}\Sc(p)\right) = -\frac{1}{6}\partial_k(\Ric(\Theta,\Theta))= -\frac{1}{3}\Ric(\Theta,\Theta_k) \\
    \bar{w}_{kj} &:= \partial_j\left(-\frac{1}{3}\Ric(\Theta,\Theta_k)\right) = -\frac{1}{3}\left(\Ric(\Theta_j,\Theta_k) + \Ric(\Theta,\Theta_{kj})\right)\nonumber
\end{align}
which, combined with the fact that
\begin{align}\label{eq:65}
    \Theta_{11} &= -\Theta \nonumber\\ \Theta_{12} &= \Theta_{21} = \cot{\theta^1}\Theta_2 \\ \Theta_{22} &= -\sin{\theta^1}\cos{\theta^1}\Theta_1 - \sin^2{\theta^1}\Theta\nonumber
\end{align}
yields
\begin{align}\label{eq:64}
    \bar{w}_{11} &= -\frac{1}{3}(\Ric(\Theta_1,\Theta_1) - \Ric(\Theta,\Theta)) \nonumber\\ 
    \bar{w}_{12} &= \bar{w}_{21} = -\frac{1}{3}(\Ric(\Theta_1,\Theta_2) + \cot{\theta^1}\Ric(\Theta,\Theta_2)) \\
     \bar{w}_{22} &= -\frac{1}{3}(\Ric(\Theta_2,\Theta_2) - \sin{\theta^1}\cos{\theta^1}\Ric(\Theta,\Theta_1) - \sin^2{\theta^1}\Ric(\Theta,\Theta)).\nonumber
\end{align}
Using  (\ref{eq:63})  and recalling that $\Delta_{{\mathbb S}^{2}} \bar{w}=-6 \bar{w}$,  we can rewrite the first line of the terms multiplying $\rho^{4}$ in \eqref{eq:74}  as:
\begin{align}\label{eq:73}
    &4\bar{w}\Delta_{{\mathbb S}^2}\bar{w} + (\Delta_{{\mathbb S}^2}\bar{w})^2 + 2g_{{\mathbb S}^2}^{ij}\bar{w}_i\bar{w}_j + 4\bar{w}\Ric(\Theta,\Theta) - \frac{4}{3}\Ric(\Theta,\Theta)\Delta_{{\mathbb S}^2}\bar{w} \nonumber\\
    &= -\frac{5}{3}\Ric(\Theta,\Theta)^2 + \frac{4}{9}\Sc(p)\Ric(\Theta,\Theta) + \frac{1}{27}\Sc(p)^2 + \frac{2}{9}(\Ric(\Theta,\Theta_1)^2 + \Ric(\Theta,\Bar{\Theta}_2)^2).
\end{align}
The second line of the terms multiplying $\rho^{4}$ in \eqref{eq:74} can be rewritten as:
\begin{align}\label{eq:66}
    &\frac{2}{3}\bar{w}_kg_{{\mathbb S}^2}^{ij}g_{{\mathbb S}^2}^{kn}g_{{\mathbb S}^2}^{ml}g({\mathcal R}(\Theta,\Theta_n)\Theta,\Theta_m)\Big(\partial_ig^{{\mathbb S}^2}_{jl} + \partial_jg^{{\mathbb S}^2}_{il} - \partial_lg^{{\mathbb S}^2}_{ij}\Big) \nonumber\\&= \sum_k - \frac{2}{3}\bar{w}_kg_{{\mathbb S}^2}^{22}g_{{\mathbb S}^2}^{kk}g_{{\mathbb S}^2}^{11}g({\mathcal R}(\Theta,\Theta_k)\Theta,\Theta_1)(2\sin{\theta^1}\cos{\theta^1}) \nonumber\\&= \frac{4\cot{\theta^1}}{9}\Ric(\Theta,\Theta_1)g({\mathcal R}(\Theta,\Theta_1)\Theta,\Theta_1) + \frac{4\cot{\theta^1}}{9}\Ric(\Theta,\Bar{\Theta}_2)g({\mathcal R}(\Theta,\Bar{\Theta}_2)\Theta,\Theta_1),
\end{align}
where the first equality follows because the only non-zero terms occur when $i=j$, $k=n$ and $m=l$ (note that $\partial_ig^{{\mathbb S}^2}_{jk}$ is only non-zero when $j=k=2$ and $i=1$), and for the second equality we  used (\ref{eq:63}). 
\\The third line of the terms multiplying $\rho^{4}$ in \eqref{eq:74} can be rewritten as:
\begin{align}\label{eq:67}
    &-\frac{2}{3}\bar{w}_kg_{{\mathbb S}^2}^{ij}g_{{\mathbb S}^2}^{kl}\Big(\partial_ig({\mathcal R}(\Theta,\Theta_j)\Theta,\Theta_l) + \partial_jg({\mathcal R}(\Theta,\Theta_i)\Theta,\Theta_l) - \partial_lg({\mathcal R}(\Theta,\Theta_i)\Theta,\Theta_j)\Big) \nonumber\\&= \sum_{i,k} -\frac{2}{3}\bar{w}_kg_{{\mathbb S}^2}^{ii}g_{{\mathbb S}^2}^{kk}\Big(2\partial_ig({\mathcal R}(\Theta,\Theta_i)\Theta,\Theta_k) - \partial_kg({\mathcal R}(\Theta,\Theta_i)\Theta,\Theta_i)\Big) \nonumber\\&= \sum_{i,k} -\frac{2}{3}\bar{w}_kg_{{\mathbb S}^2}^{ii}g_{{\mathbb S}^2}^{kk}\Big[2\big(g({\mathcal R}(\Theta,\Theta_{ii})\Theta,\Theta_k) + g({\mathcal R}(\Theta,\Theta_i)\Theta_i,\Theta_k) + g({\mathcal R}(\Theta,\Theta_i)\Theta,\Theta_{ki})\big) \nonumber\\&\quad- g({\mathcal R}(\Theta_k,\Theta_i)\Theta,\Theta_i) - g({\mathcal R}(\Theta,\Theta_{ik})\Theta,\Theta_i) - g({\mathcal R}(\Theta,\Theta_i)\Theta_k,\Theta_i) - g({\mathcal R}(\Theta,\Theta_i)\Theta,\Theta_{ik})\Big] \nonumber\\&= -\frac{2}{3}\bar{w}_2g_{{\mathbb S}^2}^{11}g_{{\mathbb S}^2}^{22}\Big[2\big(g({\mathcal R}(\Theta,\Theta_1)\Theta_1,\Theta_2) + g({\mathcal R}(\Theta,\Theta_1)\Theta,\Theta_{12})\big) \nonumber\\&\quad- g({\mathcal R}(\Theta_2,\Theta_1)\Theta,\Theta_1) - g({\mathcal R}(\Theta,\Theta_{12})\Theta,\Theta_1) - g({\mathcal R}(\Theta,\Theta_1)\Theta_2,\Theta_1) - g({\mathcal R}(\Theta,\Theta_1)\Theta,\Theta_{12})\Big] \nonumber\\&\quad- \frac{2}{3}\bar{w}_1g_{{\mathbb S}^2}^{22}g_{{\mathbb S}^2}^{11}\Big[2\big(g({\mathcal R}(\Theta,\Theta_{22})\Theta,\Theta_1) + g({\mathcal R}(\Theta,\Theta_2)\Theta_2,\Theta_1) + g({\mathcal R}(\Theta,\Theta_2)\Theta,\Theta_{21})\big) \nonumber\\&\quad- g({\mathcal R}(\Theta_1,\Theta_2)\Theta,\Theta_2) - g({\mathcal R}(\Theta,\Theta_{21})\Theta,\Theta_2) - g({\mathcal R}(\Theta,\Theta_2)\Theta_1,\Theta_2) - g({\mathcal R}(\Theta,\Theta_2)\Theta,\Theta_{21})\Big] \nonumber\\&\quad- \frac{2}{3}\bar{w}_2g_{{\mathbb S}^2}^{22}g_{{\mathbb S}^2}^{22}\Big[2\big(g({\mathcal R}(\Theta,\Theta_{22})\Theta,\Theta_2) + g({\mathcal R}(\Theta,\Theta_2)\Theta,\Theta_{22})\big) \nonumber\\&\quad- g({\mathcal R}(\Theta,\Theta_{22})\Theta,\Theta_2) - g({\mathcal R}(\Theta,\Theta_2)\Theta,\Theta_{22})\Big] \nonumber\\&= \frac{8}{9}\Ric(\Theta,\Bar{\Theta}_2)g({\mathcal R}(\Theta,\Theta_1)\Theta_1,\Bar{\Theta}_2) \nonumber\\&\quad +\frac{4}{9}\Ric(\Theta,\Theta_1)\big(-\cot{\theta^1}g({\mathcal R}(\Theta,\Theta_1)\Theta,\Theta_1) + 2g({\mathcal R}(\Theta,\Bar{\Theta}_2)\Bar{\Theta}_2,\Theta_1)\big) \\&\quad -\frac{4\cot{\theta^1}}{9}\Ric(\Theta,\Bar{\Theta}_2)g({\mathcal R}(\Theta,\Theta_1)\Theta,\Bar{\Theta}_2). \nonumber
\end{align}
For the second equality above we have used that $\Theta_{11} = -\Theta$ which means that all the $i=k=1$ terms are zero thanks to the symmetries of the Riemann tensor. The last equality follows by applying (\ref{eq:65}) and (\ref{eq:63}). 
\\Combining (\ref{eq:66}) and (\ref{eq:67}) shows that the second and third lines of the fourth order term in \eqref{eq:74} become:
\begin{align}\label{eq:68}
    \frac{8}{9}\Big(\Ric(\Theta,\Bar{\Theta}_2)g({\mathcal R}(\Theta,\Theta_1)\Theta_1,\Bar{\Theta}_2) + \Ric(\Theta,\Theta_1)g({\mathcal R}(\Theta,\Bar{\Theta}_2)\Bar{\Theta}_2,\Theta_1)\Big) = \frac{8}{9}\Big(\Ric(\Theta,\Bar{\Theta}_2)^2 + \Ric(\Theta,\Theta_1)^2\Big)
\end{align}
where we have used the definition of the Ricci tensor and the symmetries of the Riemann tensor to rewrite $\Ric(\Theta,\Bar{\Theta}_2) = g({\mathcal R}(\Theta,\Theta_1)\Theta_1,\Bar{\Theta}_2)$ and $\Ric(\Theta,\Theta_1) = g({\mathcal R}(\Theta,\Bar{\Theta}_2)\Bar{\Theta}_2,\Theta_1)$.
\\ Turning now to the next term in \eqref{eq:74}, we have:  
\begin{align}\label{eq:72}
    &-\frac{4}{3}g_{{\mathbb S}^2}^{il}g_{{\mathbb S}^2}^{kj}g({\mathcal R}(\Theta,\Theta_l)\Theta,\Theta_k)(\Hess_{{\mathbb S}^2}\bar{w})_{ij} \nonumber\\
   & = -\frac{4}{3}g({\mathcal R}(\Theta,\Theta_1)\Theta,\Theta_1)\big(\bar{w}_{11} - \Gamma^i_{11}\bar{w}_i\big) -\frac{8}{3\sin^2{\theta^1}}g({\mathcal R}(\Theta,\Theta_1)\Theta,\Theta_2)\big(\bar{w}_{12} - \Gamma^i_{12}\bar{w}_i\big) \nonumber\\
    & \quad -\frac{4}{3\sin^4{\theta^1}}g({\mathcal R}(\Theta,\Theta_2)\Theta,\Theta_2)\big(\bar{w}_{22} - \Gamma^i_{22}\bar{w}_i\big) \nonumber\\
    &= \frac{4}{9}g({\mathcal R}(\Theta,\Theta_1)\Theta,\Theta_1)\big(\Ric(\Theta_1,\Theta_1) - \Ric(\Theta,\Theta)\big) +\frac{8}{9}g({\mathcal R}(\Theta,\Theta_1)\Theta,\Bar{\Theta}_2)\Ric(\Bar{\Theta}_2,\Theta_1) \nonumber\\
    &  \quad +\frac{4}{9}g({\mathcal R}(\Theta,\Bar{\Theta}_2)\Theta,\Bar{\Theta}_2)\big(\Ric(\Bar{\Theta}_2,\Bar{\Theta}_2) - \Ric(\Theta,\Theta)\big) \nonumber\\
    &= \frac{4}{9}\big(g({\mathcal R}(\Theta,\Theta_1)\Theta,\Theta_1)\Ric(\Theta_1,\Theta_1) + g({\mathcal R}(\Theta,\Bar{\Theta}_2)\Theta,\Bar{\Theta}_2)\Ric(\Bar{\Theta}_2,\Bar{\Theta}_2)\big) + \frac{4}{9}\Ric(\Theta,\Theta)^2 - \frac{8}{9}\Ric(\Bar{\Theta}_2,\Theta_1)^2 \nonumber\\
    &= -\frac{4}{9}\Big[\Ric(\Theta_1,\Theta_1)\big(\Ric(\Theta_1,\Theta_1) + g({\mathcal R}(\Theta_1,\Bar{\Theta}_2)\Theta_1,\Bar{\Theta}_2)\big) + \Ric(\Bar{\Theta}_2,\Bar{\Theta}_2)\big(\Ric(\Bar{\Theta}_2,\Bar{\Theta}_2) + g({\mathcal R}(\Theta_1,\Bar{\Theta}_2)\Theta_1,\Bar{\Theta}_2)\big)\Big] \nonumber\\
    &  \quad + \frac{4}{9}\Ric(\Theta,\Theta)^2 - \frac{8}{9}\Ric(\Bar{\Theta}_2,\Theta_1)^2 \nonumber\\
    &   = -\frac{4}{9}\big(\Ric(\Theta_1,\Theta_1)^2 + \Ric(\Bar{\Theta}_2,\Bar{\Theta}_2)^2\big) + \frac{2}{9}\Sc(p)\big(\Ric(\Theta_1,\Theta_1) + \Ric(\Bar{\Theta}_2,\Bar{\Theta}_2)\big) \nonumber\\
    &  \quad- \frac{4}{9}\Ric(\Theta,\Theta)\big(\Ric(\Theta_1,\Theta_1) + \Ric(\Bar{\Theta}_2,\Bar{\Theta}_2)\big) + \frac{4}{9}\Ric(\Theta,\Theta)^2 - \frac{8}{9}\Ric(\Bar{\Theta}_2,\Theta_1)^2 \nonumber\\
    &= -\frac{2}{9}\Sc(p)^2 + \frac{2}{9}\Sc(p)\Ric(\Theta,\Theta) + \frac{8}{9}\Ric(\Theta_1,\Theta_1)\Ric(\Bar{\Theta}_2,\Bar{\Theta}_2) + \frac{4}{9}\Ric(\Theta,\Theta)^2 - \frac{8}{9}\Ric(\Bar{\Theta}_2,\Theta_1)^2
    \end{align}
where for the second equality we used (\ref{eq:63}), (\ref{eq:64}) and the fact that the only non-vanishing Christoffel symbols for ${\mathbb S}^2$ are $\Gamma^2_{12} = \Gamma^2_{21} = \cot{\theta^1}$ and $\Gamma^1_{22} = -\sin{\theta^1}\cos{\theta^1}$. For the third and fourth equalities we used the following identities:
\begin{align}\label{eq:69}
    g({\mathcal R}(\Theta,\Theta_1)\Theta,\Theta_1) &= -\Ric(\Theta_1,\Theta_1) - g({\mathcal R}(\Theta_1,\Bar{\Theta}_2)\Theta_1,\Bar{\Theta}_2) \nonumber\\ g({\mathcal R}(\Theta,\Bar{\Theta}_2)\Theta,\Bar{\Theta}_2) &= -\Ric(\Bar{\Theta}_2,\Bar{\Theta}_2) - g({\mathcal R}(\Theta_1,\Bar{\Theta}_2)\Theta_1,\Bar{\Theta}_2) \\ g({\mathcal R}(\Theta_1,\Bar{\Theta}_2)\Theta_1,\Bar{\Theta}_2) &= -\frac{1}{2}\Sc(p) + \Ric(\Theta,\Theta) \nonumber\\ \Ric(\Theta_1,\Theta_1) + \Ric(\Bar{\Theta}_2,\Bar{\Theta}_2) &= \Sc(p) - \Ric(\Theta,\Theta)\nonumber
\end{align}
Next, we note that the eleventh and fourteenth $4^{th}$ order terms in \eqref{eq:74} cancel out as 
\begin{align*}
    &\frac{4}{9}\delta^{\mu \nu}g_{{\mathbb S}^2}^{ij}g({\mathcal R}(\Theta,\Theta_i)\Theta,E_{\mu})g({\mathcal R}(\Theta,\Theta_j)\Theta,E_{\nu}) \\
    &= \frac{4}{9}g_{{\mathbb S}^2}^{ij}\big[g({\mathcal R}(\Theta,\Theta_i)\Theta,\Theta_1)g({\mathcal R}(\Theta,\Theta_j)\Theta,\Theta_1) + g({\mathcal R}(\Theta,\Theta_i)\Theta,\Bar{\Theta}_2)g({\mathcal R}(\Theta,\Theta_j)\Theta,\Bar{\Theta}_2)\big]  \\
    &= \frac{4}{9}g_{{\mathbb S}^2}^{ij}g_{{\mathbb S}^2}^{lk}g({\mathcal R}(\Theta,\Theta_i)\Theta,\Theta_l)g({\mathcal R}(\Theta,\Theta_j)\Theta,\Theta_k)
        \end{align*}
where we have used the orthonormal basis $\{E_1 = \Theta, E_2 = \Theta_1, E_3 = \Bar{\Theta}_2\}$. The ninth term becomes:

\begin{align}\label{eq:70}
    - \frac{2}{9}g({\mathcal R}(\Theta,\Theta_1)\Theta,\Bar{\Theta}_2)^2 = - \frac{2}{9}\Ric(\Theta_1,\Bar{\Theta}_2)^2
\end{align}
 and, finally, the twelfth:
 
 \begin{align}\label{eq:71}
     &\frac{2}{9}g({\mathcal R}(\Theta,\Theta_1)\Theta,\Theta_1)g({\mathcal R}(\Theta,\Bar{\Theta}_2)\Theta,\Bar{\Theta}_2) \nonumber\\
     &= \frac{2}{9}\big[-\Ric(\Theta_1,\Theta_1) - g({\mathcal R}(\Theta_1,\Bar{\Theta}_2)\Theta_1,\Bar{\Theta}_2)\big]\big[-\Ric(\Bar{\Theta}_2,\Bar{\Theta}_2) - g({\mathcal R}(\Theta_1,\Bar{\Theta}_2)\Theta_1,\Bar{\Theta}_2)\big] \nonumber\\
         &=\frac{2}{9} \left[ -\Ric(\Theta_{1}, \Theta_{1})+\frac{1}{2} \Sc(p) -\Ric(\Theta, \Theta) \right]\cdot \left[ -\Ric(\bar{\Theta}_{2}, \bar{\Theta}_{2})+\frac{1}{2} \Sc(p) -\Ric(\Theta, \Theta) \right]  \nonumber \\
     &= \frac{2}{9}  \Ric(\Theta_{1}, \Theta_{1}) \Ric(\bar{\Theta}_{2}, \bar{\Theta}_{2})+ \frac{1}{18} \Sc(p)^{2} -\frac{2}{9} \Ric(\Theta, \Theta) \left[  \Sc(p)- \Ric(\Theta_{1}, \Theta_{1})-\Ric(\bar{\Theta}_{2}, \bar{\Theta}_{2})-  \Ric(\Theta, \Theta) \right]\nonumber \\
     &\quad -\frac{1}{9} \Sc(p) \left[ \Ric(\Theta_1, \Theta_1) + \Ric(\bar\Theta_2, \bar \Theta_2) \right] \nonumber \\
      &=  \frac{2}{9}\Ric(\Theta_1,\Theta_1)\Ric(\Bar{\Theta}_2,\Bar{\Theta}_2) -\frac{1}{18}\Sc(p)^2 + \frac{1}{9}\Sc(p)\Ric(\Theta,\Theta).
 \end{align}
 again using (\ref{eq:69}). Substituting (\ref{eq:73}), \eqref{eq:66}, \eqref{eq:67}, (\ref{eq:68}), (\ref{eq:72}), (\ref{eq:70}) and (\ref{eq:71}) into (\ref{eq:74}) yields:
 
 \begin{align}\label{eq:75}
     H^2\sqrt{\det\mathring{g}} &= \sin{\theta^1}\Bigg[4 + \Bigg[4\Delta_{{\mathbb S}^2}\bar{w} - 2\Ric(\Theta,\Theta)\Bigg]\rho^2 + \Bigg[\frac{4}{3}g_{{\mathbb S}^2}^{ij}g(\nabla_{\Theta}{\mathcal R}(\Theta,\Theta_i)\Theta,\Theta_j)\Bigg]\rho^3 \nonumber\\&\quad+ \Bigg[-\frac{17}{18}\Ric(\Theta,\Theta)^2 -\frac{13}{54}\Sc(p)^2 + \frac{7}{9}\Sc(p)\Ric(\Theta,\Theta) + \frac{10}{9}\Big(\Ric(\Theta,\Theta_1)^2 + \Ric(\Theta,\Bar{\Theta}_2)^2\Big) \\&\quad+ \frac{10}{9}\Big(\Ric(\Theta_1,\Theta_1)\Ric(\Bar{\Theta}_2,\Bar{\Theta}_2) - \Ric(\Theta_1,\Bar{\Theta}_2)^2\Big) + \frac{1}{2}g_{{\mathbb S}^2}^{ij}g(\nabla_{\Theta \Theta}^2{\mathcal R}(\Theta,\Theta_i)\Theta,\Theta_j)\Bigg]\rho^4 + \cO(\rho^5)\Bigg].\nonumber
 \end{align}

 \textbf{Step 3}: computation of the integral of \eqref{eq:75}.
\\ We  now integrate each term, and finally we will combine the results. For the following, it will be useful to observe that
\begin{equation}\label{eq:intCoord}
    \int_{{\mathbb S}^2}(x^{\mu})^2\ dV_{{\mathbb S}^2} = \frac{4\pi}{3}, \quad  \int_{{\mathbb S}^2}(x^{\mu})^2(x^{\nu})^2\ dV_{{\mathbb S}^2} = \frac{4\pi}{15} \;\text{ for } \mu\neq\nu, \quad \int_{{\mathbb S}^2}(x^{\mu})^4\ dV_{{\mathbb S}^2} = \frac{4\pi}{5}. 
\end{equation}
 Recalling that $\Theta = (\sin{\theta^1}\cos{\theta^2}, \sin{\theta^1}\sin{\theta^2}, \cos{\theta^1}) = (x, y, z)$, we have 
 \begin{equation}\label{eq:Theta1Theta2}
 \begin{split}
    \Theta_1 &= (\cos{\theta^1}\cos{\theta^2}, \cos{\theta^1}\sin{\theta^2}, -\sin{\theta^1}) = \Bigg(\frac{xz}{\sqrt{x^2+y^2}}, \frac{yz}{\sqrt{x^2+y^2}}, -\sqrt{x^2+y^2}\Bigg) 
    \\ \Bar{\Theta}_2 &= (-\sin{\theta^2}, \cos{\theta^2},0) = \Bigg(-\frac{y}{\sqrt{x^2+y^2}}, \frac{x}{\sqrt{x^2+y^2}},0\Bigg) .
    \end{split}
\end{equation}
$\bullet$ \textbf{Terms of order $\rho^{2}$}. 
Notice that 
$$0= \int_{{\mathbb S}^{2}} \Delta_{{\mathbb S}^{2}} \bar{w} \, dV_{{\mathbb S}^{2}} = \int \Delta_{{\mathbb S}^{2}} \bar{w} \, \sin \theta^{1}\, d\theta^{1} d\theta^{2}$$
and use \eqref{eq:intCoord} to compute
$$
-2 \int_{{\mathbb S}^{2}} \Ric(\Theta, \Theta) dV_{{\mathbb S}^{2}}(\Theta)= -\frac{8 \pi}{3} \Sc(p).
$$
$\bullet$ \textbf{Terms of order $\rho^{3}$}.
Notice that by anti-symmetry it holds
$$
\int_{{\mathbb S}^{2}} g_{{\mathbb S}^2}^{ij}g(\nabla_{\Theta}{\mathcal R}(\Theta,\Theta_i)\Theta,\Theta_j) dV_{{\mathbb S}^{2}}(\Theta)=0.
$$
$\bullet$ \textbf{Terms of order $\rho^{4}$}.
\\Using \eqref{eq:intCoord}, one can compute that (see for instance (4.17) in \cite{Pacard})
$$
\int_{{\mathbb S}^{2}} (\Ric(\Theta, \Theta))^{2} dV_{{\mathbb S}^{2}}(\Theta)=\frac{4\pi}{15} \left( 2 \|\Ric \|^{2} + \Sc(p)^{2} \right)
$$
and that (see for instance \cite[Page 780]{Mondino})
$$
\int_{{\mathbb S}^{2}} \left( \Ric(\Theta_1,\Theta_1)\Ric(\Bar{\Theta}_2,\Bar{\Theta}_2) - \Ric(\Theta_1,\Bar{\Theta}_2)^2 \right)\, dV_{{\mathbb S}^{2}}(\Theta)= -\frac{2\pi}{3} \left( \|\Ric\|^{2}- \Sc(p)^{2}\right).
$$
We next compute the term $\int_{{\mathbb S}^2} \Big(\Ric(\Theta,\Theta_1)^2 + \Ric(\Theta,\Bar{\Theta}_2)^2\Big) dV_{{\mathbb S}^{2}}$. Taking into account that $R_{ij} = R_{ji}$ and \eqref{eq:Theta1Theta2}, we can write:
\begin{align*}
    \Ric(\Theta,\Theta_1)^2 &= \left(R_{11}\frac{x^2z}{\sqrt{x^2+y^2}} + R_{12}\frac{2xyz}{\sqrt{x^2+y^2}} + R_{13}\frac{z^2x - x(x^2 + y^2)}{\sqrt{x^2+y^2}} + R_{22}\frac{y^2z}{\sqrt{x^2+y^2}} \right. \\&\quad \left.+ R_{23}\frac{z^2y - y(x^2 + y^2)}{\sqrt{x^2+y^2}} - R_{33}z\sqrt{x^2+y^2}\right)^2 \nonumber\\
    \Ric(\Theta,\Bar{\Theta}_2)^2 &= \left(-R_{11}\frac{xy}{\sqrt{x^2+y^2}} + R_{12}\frac{x^2 - y^2}{\sqrt{x^2+y^2}} + R_{22}\frac{xy}{\sqrt{x^2+y^2}}- R_{31}\frac{yz}{\sqrt{x^2+y^2}} + R_{32}\frac{xz}{\sqrt{x^2+y^2}}\right)^2.
\end{align*}
Observe that we can ignore any polynomial term in the integrand which has an odd power, since it will integrate to zero by anti-symmetry. Inspection shows that after the brackets are expanded, the only terms that will consist entirely of even powers of $x$, $y$ and $z$ are those containing the following coefficients of the Ricci tensor:
\begin{align*}
    R_{11}^2, R_{11}R_{22}, R_{11}R_{33}, R_{12}^2, R_{13}^2, R_{22}^2, R_{22}R_{33}, R_{23}^2, R_{33}^2.
\end{align*}
We compute these terms as follows, using that $x^2 + y^2 + z^2 = 1$:
\begin{align*}
    \left(R_{11}\frac{x^2z}{\sqrt{x^2+y^2}}\right)^2 + \left(-R_{11}\frac{xy}{\sqrt{x^2+y^2}}\right)^2 &= R_{11}^2\left(\frac{x^4z^2 + x^2y^2}{x^2+y^2}\right) = R_{11}^2\left(\frac{x^2(x^2(1-x^2-y^2) + y^2)}{x^2+y^2}\right) \\&= R_{11}^2\left(\frac{x^2(x^2 + y^2)(1 - x^2)}{x^2+y^2}\right) = R_{11}^2(x^2 - x^4),
\end{align*}
\begin{align*}
    &2\left(R_{11}\frac{x^2z}{\sqrt{x^2+y^2}}\right)\left(R_{22}\frac{y^2z}{\sqrt{x^2+y^2}}\right) - 2\left(R_{11}\frac{xy}{\sqrt{x^2+y^2}}\right)\left(R_{22}\frac{xy}{\sqrt{x^2+y^2}}\right) \\
    & \qquad = 2R_{11}R_{22}\left(\frac{x^2y^2(z^2 - 1)}{x^2+y^2}\right)= -2R_{11}R_{22}x^2y^2,
\end{align*}
\begin{align*}
    -2\left(R_{11}\frac{x^2z}{\sqrt{x^2+y^2}}\right)\left(R_{33}z\sqrt{x^2+y^2}\right) = -2R_{11}R_{33}x^2z^2,
\end{align*}
\begin{align*}
    \left(R_{12}\frac{2xyz}{\sqrt{x^2+y^2}}\right)^2 + \left(R_{12}\frac{x^2 - y^2}{\sqrt{x^2+y^2}}\right)^2 &= R_{12}^2\left(\frac{4x^2y^2z^2 + x^4 + y^4 - 2x^2y^2}{x^2+y^2}\right) \\
    &= R_{12}^2\left(\frac{4x^2y^2(1 - x^2 - y^2) + x^4 + y^4 - 2x^2y^2}{x^2+y^2}\right) \\
    &  = R_{12}^2\left(\frac{x^4 + y^4 + 2x^2y^2 - 4x^4y^2 - 4x^2y^4}{x^2+y^2}\right) \\
    &= R_{12}^2\left(\frac{(x^2 + y^2)^2 - 4x^2y^2(x^2 + y^2)}{x^2+y^2}\right) \\&= R_{12}^2(x^2 + y^2 - 4x^2y^2) \\&= R_{12}^2(1 - z^2 - 4x^2y^2),
\end{align*}
\begin{align*}
    \left(R_{13}\frac{z^2x - x(x^2 + y^2)}{\sqrt{x^2+y^2}}\right)^2 + \left(-R_{31}\frac{yz}{\sqrt{x^2+y^2}}\right)^2 &= R_{13}^2\left(\frac{(z^2x - x(1 - z^2))^2 + y^2z^2}{1 - z^2}\right) \\&= R_{13}^2\left(\frac{4z^4x^2 - 4z^2x^2 + x^2 + y^2z^2}{1 - z^2}\right) \\&= R_{13}^2\left(\frac{4z^4x^2 - 4z^2x^2 + 1 - y^2 - z^2 + y^2z^2}{1 - z^2}\right) \\&= R_{13}^2\left(\frac{4z^2x^2(z^2 - 1) + (1 - z^2)(1 - y^2)}{1 - z^2}\right) \\&= R_{13}^2(1 - y^2 - 4z^2x^2),
\end{align*}
\begin{align*}
    -2\left(R_{22}\frac{y^2z}{\sqrt{x^2+y^2}}\right)\left(R_{33}z\sqrt{x^2+y^2}\right) = -2R_{22}R_{33}y^2z^2,
\end{align*}
\begin{align*}
    \left(R_{22}\frac{y^2z}{\sqrt{x^2+y^2}}\right)^2 + \left(R_{22}\frac{xy}{\sqrt{x^2+y^2}}\right)^2 &= R_{22}^2\left(\frac{y^4z^2 + x^2y^2}{x^2+y^2}\right) \\&= R_{22}^2\left(\frac{y^2(y^2(1-x^2-y^2) + x^2)}{x^2+y^2}\right) \\&= R_{22}^2\left(\frac{y^2(x^2 + y^2)(1 - y^2)}{x^2+y^2}\right) = R_{22}^2(y^2 - y^4),
\end{align*}
\begin{align*}
    \left(R_{23}\frac{z^2y - y(x^2 + y^2)}{\sqrt{x^2+y^2}}\right)^2 + \left(R_{32}\frac{xz}{\sqrt{x^2+y^2}}\right)^2 &= R_{23}^2\left(\frac{(z^2y - y(1 - z^2))^2 + x^2z^2}{1 - z^2}\right) \\&= R_{23}^2\left(\frac{4z^4y^2 - 4z^2y^2 + y^2 + x^2z^2}{1 - z^2}\right) \\&= R_{23}^2\left(\frac{4z^4y^2 - 4z^2y^2 + 1 - x^2 - z^2 + x^2z^2}{1 - z^2}\right) \\&= R_{23}^2\left(\frac{4z^2y^2(z^2 - 1) + (1 - z^2)(1 - x^2)}{1 - z^2}\right) \\&= R_{23}^2(1 - x^2 - 4z^2y^2),
\end{align*}
\begin{align*}
    \left(-R_{33}z\sqrt{x^2+y^2}\right)^2 = R_{33}^2z^2(x^2 + y^2) = R_{33}^2z^2(1 - z^2) = R_{33}^2(z^2 - z^4).
\end{align*}
Thus, using \eqref{eq:intCoord}, we get:
\begin{align}\label{eq:76}
    \int_{{\mathbb S}^2}\Ric(\Theta,\Theta_1)^2 + \Ric(\Theta,\Bar{\Theta}_2)^2 dV_{{\mathbb S}^2} &= \int_{{\mathbb S}^2}R_{11}^2(x^2 - x^4) - 2R_{11}R_{22}x^2y^2 - 2R_{11}R_{33}x^2z^2 + R_{12}^2(1 - z^2 - 4x^2y^2) \nonumber\\
    &  \quad+ R_{13}^2(1 - y^2 - 4z^2x^2) + R_{22}^2(y^2 - y^4) - 2R_{22}R_{33}y^2z^2 \nonumber\\&\quad+ R_{23}^2(1 - x^2 - 4z^2y^2) + R_{33}^2(z^2 - z^4) dV_{{\mathbb S}^2} \nonumber\\
    &= \frac{8\pi}{15}(R_{11}^2 + R_{22}^2 + R_{33}^2 + 3R_{12}^2 + 3R_{13}^2 + 3R_{23}^2) \nonumber\\&\quad- \frac{8\pi}{15}(R_{11}R_{22} + R_{11}R_{33} + R_{22}R_{33}) \nonumber\\
    &= \frac{4\pi}{5}(R_{11}^2 + R_{22}^2 + R_{33}^2 + 2R_{12}^2 + 2R_{13}^2 + 2R_{23}^2) \nonumber\\
    &\quad- \frac{4\pi}{15}(R_{11}^2 + R_{22}^2 + R_{33}^2 + 2R_{11}R_{22} + 2R_{11}R_{33} + 2R_{22}R_{33}) \nonumber\\
    &= \frac{4\pi}{5}\Big[\norm{\Ric}^2 - \frac{1}{3}\Sc(p)^2\Big] \nonumber\\
    &= \frac{4\pi}{5}\norm{S_{p}}^2,
\end{align}
where $ S := \Ric - \frac{1}{3} \Sc\,  g$ is the traceless Ricci tensor.
\\
To rewrite the final $\rho^4$ term, first note that $\nabla_{\Theta}\Theta_i=0$ because $\Theta_i$ does not depend on the radial coordinate and the Christoffel symbols vanish at $p$ since we chose normal coordinates. Thus, by compatibility of the metric:
\begin{align*}
    \frac{1}{2}g_{{\mathbb S}^2}^{ij}g(\nabla_{\Theta \Theta}^2{\mathcal R}(\Theta,\Theta_i)\Theta,\Theta_j) &= \frac{1}{2}g_{{\mathbb S}^2}^{ij}\nabla_{\Theta \Theta}^2g({\mathcal R}(\Theta,\Theta_i)\Theta,\Theta_j) +\cO_p(\rho)= \frac{1}{2}\nabla_{\Theta \Theta}^2\big(g_{{\mathbb S}^2}^{ij}g({\mathcal R}(\Theta,\Theta_i)\Theta,\Theta_j)\big)+ \cO_p(\rho)\\
    &= - \frac{1}{2}\nabla_{\Theta \Theta}^2\Ric(\Theta,\Theta)+\cO_p(\rho).
\end{align*}
 Now we can integrate, using index notation and \eqref{eq:intCoord}, as follows:
\begin{align}
    \int_{{\mathbb S}^2}-\frac{1}{2}\nabla_{\Theta \Theta}^2\Ric(\Theta,\Theta)\ dV_{{\mathbb S}^{2}}&= -\frac{1}{2}\int_{{\mathbb S}^2}\nabla_{\mu}\nabla_{\nu}R_{\sigma\tau}x^{\mu}x^{\nu}x^{\sigma}x^{\tau}\ dV_{{\mathbb S}^{2}} \nonumber \\ 
    &= -\nabla_{\mu}\nabla_{\nu}R_{\sigma\tau}\frac{1}{2}\int_{{\mathbb S}^2}x^{\mu}x^{\nu}x^{\sigma}x^{\tau}\ dV_{{\mathbb S}^{2}} \nonumber \\ 
    &= -\frac{4\pi}{30}\Big(3\sum_{\mu}\nabla_{\mu}\nabla_{\mu}R_{\mu\mu} + \sum_{\mu\neq\nu}\nabla_{\mu}\nabla_{\mu}R_{\nu\nu} + \sum_{\mu\neq\nu}\nabla_{\mu}\nabla_{\nu}R_{\mu\nu} + \sum_{\mu\neq\nu}\nabla_{\mu}\nabla_{\nu}R_{\nu\mu}\Big) \nonumber \\ 
       &= -\frac{2\pi}{15} \sum_{\mu\nu}\Big(\nabla_{\mu}\nabla_{\mu}R_{\nu\nu} + 2\nabla_{\mu}\nabla_{\nu}R_{\mu\nu}\Big) \nonumber\\ 
    &= -\frac{4\pi}{15}\Delta \Sc(p), \label{eq:DeltaSc}
\end{align}
where in the last line we have used the contracted Bianchi identity.
Thus, integrating integrating (\ref{eq:75}) we obtain:
\begin{align}\label{eq:77}
    W(S_{p,\rho}(w)) &= \int_{{\mathbb S}^2}H^2\sqrt{\det\mathring{g}}\ dV_{{\mathbb S}^2} \nonumber\\&= 16\pi - \frac{8\pi}{3}\Sc(p)\rho^2 + \Bigg[-\frac{34\pi}{135}(2\norm{\Ric}^2 + \Sc(p)^2) -\frac{26\pi}{27}\Sc(p)^2 + \frac{28\pi}{27}\Sc(p)^2 \nonumber\\
    &\quad  \quad+ \frac{8\pi}{9}\norm{S}^2 - \frac{20\pi}{27}(\norm{\Ric}^2 - \Sc(p)^2) - \frac{4\pi}{15}\Delta \Sc(p)\Bigg]\rho^4 + \cO_{p}(\rho^5)\nonumber \\
    &   = 16\pi - \frac{8\pi}{3}\Sc(p)\rho^2 + \Bigg[\frac{4\pi}{27}\Sc(p)^2 - \frac{16\pi}{45}\norm{S_{p}}^2 - \frac{4\pi}{15}\Delta \Sc(p)\Bigg]\rho^4 + \cO_{p}(\rho^5).
\end{align}
 
  \textbf{Step 4}: conclusion. 
\\ In order to conclude, we have to compute the expansion for  $|S_{p,\rho}(w)|_g$. To this aim,  inserting \eqref{eq:estw} into \eqref{eq:61} yields the following expansion for the area of optimally perturbed spheres:
\begin{equation}\label{eq:AreaSprhow}
    |S_{p,\rho}(w)|_g = |{\mathbb S}^2|_{\mathring{\delta}}\rho^2\left[ 1 - \frac{1}{18}\Sc_p\rho^2 + \cO(\rho^4)\right].
\end{equation}
Substituting \eqref{eq:77} and \eqref{eq:AreaSprhow} into \eqref{eq:defHaw},  we thus obtain:
\begin{align*}
 m_H(S_{p,\rho}(w_{p,\rho}))&=   \frac{\sqrt{|{\mathbb S}^2|_{\mathring{\delta}}}\rho}{\sqrt{(16\pi)^3}}\left[ 1 - \frac{1}{36}\Sc_p\rho^2 + \cO(\rho^4)\right] \left[ \frac{8\pi}{3}\Sc_p\rho^{2} + \left(\frac{4\pi}{15}\Delta \Sc(p) +  \frac{16\pi}{45}\norm{S_p}^2- \frac{4\pi}{27}\Sc_p^2\right)\rho^4 + \cO(\rho^5) \right]\\
  &= \frac{1}{12} \Sc_p\rho^{3} +  \left(\frac{1}{120} \Delta \Sc(p) + \frac{1}{90}\norm{S_p}^2 - \frac{1}{144}\Sc_p^2  \right)  \rho^5 + \cO(\rho^6).
\end{align*}

\end{proof}

\section{Rigidity results in case of non-negative scalar curvature}

\begin{theorem}\label{prop:Riem0}
Let $(M^3,g)$ be a  three-dimensional Riemannian manifold  and let $\Omega\subset M$ be an open subset with non-negative scalar curvature.
Assume that  every $p\in \Omega\setminus \partial M$ admits a neighbourhood $U\subset M\setminus \partial M$ such that
\begin{equation}\label{eq:mHUleq0}
\sup\{m_{H}(\Sigma) \,:\, \Sigma \subset U \emph{ is an immersed 2-dimensional surface}\} \leq 0,
\end{equation}
or, more generally, that
\begin{equation}\label{eq:HpRiem0}
\limsup_{\rho\downarrow 0} \rho^{-5} m_{H}(S_{p,\rho}(w_{p,\rho}))\leq 0, \quad \forall p\in \Omega \setminus \partial M, 
\end{equation}
where $S_{p,\rho}(w_{p,\rho})$ is the optimally perturbed geodesic sphere with $w_{p,\rho}$ given in Lemma \ref{lem:estw}.

Then $\Omega \setminus \partial M$ is locally isometric to Euclidean $\R^3$.
\end{theorem}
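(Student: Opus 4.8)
The plan is to deduce everything from the expansion of Proposition \ref{prop:expHawk}, namely
\[
m_H(S_{p,\rho}(w_{p,\rho})) = \tfrac{1}{12}\Sc_p\,\rho^{3} + \Big(\tfrac{1}{120}\Delta\Sc(p) + \tfrac{1}{90}\|S_p\|^2 - \tfrac{1}{144}\Sc_p^2\Big)\rho^5 + \cO_p(\rho^6),
\]
valid for $\rho$ small. First I would reduce the hypothesis to the pointwise condition \eqref{eq:HpRiem0}. Given $p\in\Omega\setminus\partial M$, pick a neighbourhood $U\subset M\setminus\partial M$ as in \eqref{eq:mHUleq0} and a small closed geodesic ball $\overline{B}\subset U$ to play the role of the compact set $\mathcal K$ in Lemma \ref{lem:estw}; this yields a uniform $\rho_0>0$ for which the optimally perturbed geodesic sphere $S_{p,\rho}(w_{p,\rho})$ is defined. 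Since $w_{p,\rho}=\cO_p(\rho^2)$, the surface $S_{p,\rho}(w_{p,\rho})$ sits within geodesic distance $\approx\rho$ of $p$, hence lies in $U$ for $\rho$ small; being an immersed $2$-surface contained in $U$, it satisfies $m_H(S_{p,\rho}(w_{p,\rho}))\le 0$ by \eqref{eq:mHUleq0}. Dividing by $\rho^5$ and letting $\rho\downarrow 0$ gives \eqref{eq:HpRiem0}. So it suffices to argue under \eqref{eq:HpRiem0}.

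Next I would extract that $\Sc\equiv 0$ on $\Omega\setminus\partial M$. From the expansion, $\rho^{-5}m_H(S_{p,\rho}(w_{p,\rho})) = \tfrac{1}{12}\Sc_p\,\rho^{-2} + \big(\text{a quantity bounded as }\rho\to 0\big)$. If $\Sc_p>0$ this tends to $+\infty$, contradicting \eqref{eq:HpRiem0}; hence $\Sc_p\le 0$ for every $p\in\Omega\setminus\partial M$. Combined with the standing assumption $\Sc\ge 0$ on $\Omega$, this forces $\Sc\equiv 0$ on $\Omega\setminus\partial M$.

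Then I would extract that the trace-free Ricci tensor vanishes. Once $\Sc\equiv 0$ on the open set $\Omega\setminus\partial M$, also $\Delta\Sc\equiv 0$ there, so the $\rho^5$-coefficient in the expansion reduces to $\tfrac{1}{90}\|S_p\|^2$ and therefore $\lim_{\rho\downarrow 0}\rho^{-5}m_H(S_{p,\rho}(w_{p,\rho})) = \tfrac{1}{90}\|S_p\|^2$. By \eqref{eq:HpRiem0} this limit is $\le 0$, whence $\|S_p\|^2=0$, i.e. $S_p=0$, at every $p\in\Omega\setminus\partial M$. Since $\Sc\equiv 0$ as well, $\Ric = S + \tfrac{1}{3}\Sc\,g \equiv 0$ on $\Omega\setminus\partial M$, i.e. this open manifold is Ricci-flat. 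Finally, in dimension $3$ the Weyl tensor vanishes identically, so the full Riemann tensor is an explicit algebraic function of $\Ric$ and $g$; Ricci-flatness thus gives $\Rm\equiv 0$ on $\Omega\setminus\partial M$, and a flat Riemannian manifold is locally isometric to Euclidean $\R^3$, which is the claim.

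The only point requiring genuine care — the closest thing to an obstacle — is the first reduction: one must make sure Lemma \ref{lem:estw} applies uniformly near each $p$ (by choosing the compact set $\mathcal K$ inside $U\subset M\setminus\partial M$, which produces a single $\rho_0$ and hence a well-defined family $S_{p,\rho}(w_{p,\rho})$) and that these surfaces remain inside $U$ as $\rho\downarrow 0$. Once that is in place, the remainder is the short two-step curvature extraction above, using only the already-established expansion \eqref{eq:mHSprhow} and the sign hypothesis.
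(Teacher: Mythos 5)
Your proposal is correct and follows essentially the same route as the paper: combine the expansion \eqref{eq:mHSprhow} with the sign hypothesis to force first $\Sc\equiv 0$, then $S\equiv 0$, hence $\Ric\equiv 0$, and conclude flatness from the three-dimensional decomposition of $\Rm$ in terms of $\Ric$ and $\Sc$. Your explicit reduction of \eqref{eq:mHUleq0} to \eqref{eq:HpRiem0} (via Lemma \ref{lem:estw} applied to a compact ball in $U$ and the fact that $S_{p,\rho}(w_{p,\rho})\subset U$ for small $\rho$) is left implicit in the paper but is exactly the intended argument.
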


\begin{proof}
The combination of the assumption \eqref{eq:HpRiem0} with the expansion \eqref{eq:mHSprhow} yields
\begin{equation}\label{eq:exprho2rho4}
 \limsup_{\rho\downarrow 0} \rho^{-5} \left[  \frac{1}{12} \Sc_p\rho^{3} +  \left(\frac{1}{120} \Delta \Sc(p) + \frac{1}{90}\norm{S_p}^2 - \frac{1}{144}\Sc_p^2  \right)  \rho^5 + \cO(\rho^6)   \right] \leq 0, \quad \forall p\in \Omega \setminus \partial M.
\end{equation}
Since $\Sc$ is assumed to be non-negative, letting $\rho \downarrow 0$ and looking at the order $\rho^{-2}$ in the expansion \eqref{eq:exprho2rho4},   we first infer that 
\begin{equation}\label{eq:Sc=0}
\Sc\equiv 0 \quad \text{ on } \Omega \setminus \partial M.
\end{equation}
Plugging \eqref{eq:Sc=0} into \eqref{eq:exprho2rho4} and looking now  at the order $\rho^{0}$ in the expansion, we deduce that 
\begin{equation}\label{eq:S=0}
S \equiv 0 \quad \text{ on } \Omega \setminus \partial M.
\end{equation}
Putting together \eqref{eq:Sc=0} and \eqref{eq:S=0} gives
\begin{equation}\label{eq:Ric=0}
\Ric \equiv 0 \quad \text{ on } \Omega \setminus \partial M.
\end{equation}
Recalling that in dimension three the Riemann curvature tensor can be written as $\Rm = \Ric\KN g - \frac{1}{4} \Sc \, g\KN g $ where $\KN$ is the Kulkarni-Nomizu product (see for instance \cite[Corollary 7.26]{LeeRM}) we conclude that $\Rm\equiv 0$ on $\Omega \setminus \partial M$, as desired.
\end{proof}

Recall the Definition \ref{def:2} of asymptotically locally simply connected (ALSC for short). The global rigidity result (Theorem \ref{thm:1}, below) will follow by the combination of Theorem \ref{prop:Riem0} with the following proposition of independent interest.

\begin{prop}\label{prop:ALSCR3}
Let $(M^3,g)$ be a connected, complete, ALSC Riemannian manifold without boundary and with zero sectional curvature. Then $(M^3,g)$ is globally isometric to Euclidean $\R^3$.
\end{prop}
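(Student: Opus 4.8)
The statement asserts that a connected, complete, ALSC, flat Riemannian $3$-manifold without boundary is globally isometric to $\R^3$. The natural strategy is to use the classification of complete flat manifolds (Bieberbach theory, see \cite{Wolf}): since $(M^3,g)$ is flat, complete and without boundary, its universal Riemannian cover is isometric to $\R^3$, and $M$ is isometric to $\R^3/\Gamma$ where $\Gamma$ is a discrete, torsion-free subgroup of the isometry group $\mathrm{Isom}(\R^3) = O(3)\ltimes \R^3$ acting freely and properly discontinuously. The goal is then to show that the ALSC hypothesis forces $\Gamma$ to be trivial.

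The key step is therefore a \emph{topological} argument: I would show that if $\Gamma$ is non-trivial, then $M$ fails to be ALSC. Suppose $\gamma \in \Gamma$, $\gamma \neq \mathrm{id}$. The main obstacle is that $\Gamma$ need not be generated by translations; a general element of $\Gamma$ is a screw motion or a glide reflection, so one must argue uniformly. The cleanest route is via the injectivity radius (equivalently, the ``systole at $p$''): for $p \in M$, let $\ell(p)$ be the length of the shortest non-contractible loop based at $p$, which for a flat quotient equals $\min_{\gamma \neq \mathrm{id}} \mathsf{d}_{\R^3}(\tilde p, \gamma \tilde p)$ for a lift $\tilde p$. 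If $B^g_R(p)$ is simply connected then necessarily $\ell(p) \geq R$ (indeed $\ell(p) > 2R$ is false in general, but $\ell(p) \ge R$ suffices; more carefully, a ball of radius $R$ simply connected forces the displacement function of every non-trivial $\gamma$ to be $> 2R$ at $\tilde p$, hence $\ge 2R$). So it suffices to exhibit a diverging sequence $\{p_n\}$ along which $\ell(p_n)$ stays bounded. For a screw motion or translation $\gamma$ with axis a line $L$, the displacement function $q \mapsto \mathsf{d}_{\R^3}(q, \gamma q)$ is \emph{constant} along directions parallel to $L$ and in fact bounded on all of $\R^3$ when $\gamma$ is a pure translation, or attains its (finite) infimum on $L$ and grows only in directions transverse to $L$. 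In either case one can find points $\tilde p_n \in \R^3$ with $|\tilde p_n| \to \infty$ (e.g. marching along $L$, or, if $\gamma$ is a translation, marching in any direction) on which $\mathsf{d}_{\R^3}(\tilde p_n, \gamma \tilde p_n)$ is bounded; projecting to $M$ gives a diverging sequence with $\ell(p_n)$ bounded, contradicting ALSC once $R$ is taken larger than this bound. Thus $\Gamma = \{\mathrm{id}\}$ and $M$ is isometric to $\R^3$.

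I would organize the write-up as follows. First, invoke Bieberbach/Wolf to reduce to $M = \R^3/\Gamma$ with $\Gamma$ as above; record that $M$ non-compact (part of ALSC) is automatic here anyway but not needed. Second, prove the elementary lemma: if $B^g_R(p)$ is simply connected, then for every $\gamma \in \Gamma \setminus\{\mathrm{id}\}$ one has $\mathsf{d}_{\R^3}(\tilde p, \gamma \tilde p) \ge 2R$ (a loop realizing $\gamma$ through $p$ of length $< 2R$ would lie in the ball and be non-contractible). Third, the structural fact about isometries of $\R^3$: any $\gamma \in \mathrm{Isom}(\R^3)$ of infinite order fixing no point is either a translation, a screw motion, or a glide reflection, and in each case the displacement function $\delta_\gamma(q) := \mathsf{d}_{\R^3}(q,\gamma q)$ is bounded along a line (for translations, along every line; for screw motions/glide reflections, along the axis). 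Fourth, combine: pick any non-trivial $\gamma$, pick points $\tilde p_n$ going to infinity along such a line, set $R$ bigger than $\tfrac12 \sup_n \delta_\gamma(\tilde p_n)$, and derive a contradiction with ALSC. Conclude $\Gamma$ trivial.

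The main obstacle, as noted, is handling the non-translational elements of $\Gamma$ uniformly — i.e. making precise the claim that every nontrivial orientation-preserving or orientation-reversing fixed-point-free isometry of $\R^3$ has a displacement function that stays bounded along some complete geodesic line, and that one may choose that line (or translates of it) so that the base points escape to infinity in $M$. This is a finite-dimensional linear-algebra computation on $O(3)\ltimes\R^3$ (decompose $\gamma$ into rotation part plus translation part, diagonalize the rotation, read off the axis), so it is routine but needs to be stated carefully; once it is in place the topological contradiction is immediate.
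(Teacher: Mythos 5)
Your plan follows the paper's strategy in spirit: reduce via Killing--Hopf to $M = \R^3/\Gamma$, and then show that a non-trivial element $\gamma \in \Gamma$ produces a short non-contractible loop in a ball around a diverging base point, violating ALSC. The paper packages this as a case analysis on the rank $n$ of the maximal abelian subgroup of $\Gamma$ and uses the explicit loop $\pi\circ\alpha$, $\alpha(s) = (u + sa, v)$, in the cases $n \in \{1,2\}$; you propose a more uniform systole argument based on the classification of fixed-point-free Euclidean isometries and the displacement function. Your key lemma (if $B^g_R(p)$ is simply connected then $\mathsf{d}(\tilde p, \gamma\tilde p)\ge 2R$ for all $\gamma\neq\mathrm{id}$) is correct and is stated more cleanly than what the paper writes down.

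However, the step you yourself flag as ``the main obstacle'' --- exhibiting, for each non-trivial $\gamma$, base points that escape to infinity in $M$ while the displacement of $\gamma$ stays bounded --- is a genuine gap, not a routine verification, and it is exactly where the paper's own proof also fails. Consider $\Gamma = \langle\sigma\rangle$ with $\sigma = (R_\theta, a)$ a screw motion whose rotation angle $\theta$ is an irrational multiple of $2\pi$ and $a$ directed along the axis $L$. One computes $\delta_{\sigma^k}(\tilde q) = \sqrt{k^2|a|^2 + 2 r^2(1-\cos(k\theta))}$ where $r = \mathsf{d}(\tilde q, L)$. The displacement is bounded on $L$, but $\pi(L)$ is a closed geodesic of length $|a|$ in $M$, so marching along $L$ does \emph{not} give a diverging sequence. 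Away from $L$, the systole $\ell(\tilde q) := \min_{k\neq 0}\delta_{\sigma^k}(\tilde q)$ tends to infinity as $r\to\infty$: given any $T>0$, take $K := \lceil T/|a|\rceil+1$; terms with $|k|\geq K$ exceed $T$, while the finitely many terms with $1\leq |k|<K$ have $1-\cos(k\theta)\geq c_K > 0$ (by irrationality of $\theta$), hence also exceed $T$ once $r>T/\sqrt{2c_K}$. Since diverging sequences in $M=\R^3/\langle\sigma\rangle$ are precisely those with $r_n\to\infty$, the systole diverges along every diverging sequence, so $B^g_R(p_n)$ is eventually the homeomorphic projection of a single Euclidean ball and $(M,g)$ \emph{is} ALSC. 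Thus $\R^3/\langle\sigma\rangle$ appears to be a counterexample to the statement of Proposition~\ref{prop:ALSCR3} itself, and no proof along these lines can close the gap. (The paper's argument breaks at the analogous step: for $v\neq 0$ and rotational part $\psi\neq\mathrm{id}$, $\alpha(1) = (u+a,v)$ is not in the $\Gamma$-orbit of $\alpha(0)=(u,v)$, so $\pi\circ\alpha$ is not actually a loop; and the points with $v=0$ all project to the same circle.)
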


\begin{proof}
By the classical Killing-Hopf Theorem, we have that $(M^3,g)$ is isometric to a space form $\R^3/ \Gamma$, for some discrete sub-group  $\Gamma$ of the affine transformations ${\mathbb E}(3)$ of $\R^{3}$ acting freely  (see for instance \cite[Theorem 3.3.2]{Wolf}). Denote with $\pi: \R^{3}\to \R^{3}/\Gamma\simeq M$ the covering map.
 Since  ${\mathbb E}(3)$ is isomorphic to the semi-direct product ${\rm O}(3)\ltimes \R^3$, we can write an arbitrary element $\gamma\in \Gamma$ as $\gamma = (r,a)$ where $r\in {\rm O}(3)$ and $a\in \R^3$.
\\Denote with  $n\in \{0,1,2,3\}$ the rank of the maximal abelian subgroup of $\Gamma$.   After choosing an appropriate orthonormal basis of $\R^{3}$ adapted to $\Gamma$, we can write  $\R^3 = \R^n\times\R^{3-n}$ in  such a way that  each $\gamma \in \Gamma$ can be expressed uniquely as 
$$
\gamma= \delta \times \psi,\quad \text{ where } \delta = (r|_{\R^n},a), \, a\in \R^{n}, \quad\text{and }  \psi = (r|_{\R^{3-n}},0).
$$
We next discuss separately the different cases for the value of $n$.

$\bullet$ $n=0$.  In this case,  $\Gamma\subset {\rm O}(3)$.  Since $\Gamma$ acts freely, it follows that $\Gamma=\{ {\rm Id}\}$ must be trivial and thus $(M, g)$ is isometric to $\R^3$.

$\bullet$ $n=1$. We claim that for any $\gamma\in \Gamma$, the corresponding translation $a\in \R$ must vanish.
\\Assume by contradiction this is not the case and fix such a $\gamma\in \Gamma$ with corresponding non-trivial translation $0\neq a\in \R$.
Fix an arbitrary point  $(u,v)\in \R\times\R^2$ and consider the curve $\alpha(s) = (u + sa,v)\subset \R^3$ for $s\in [0,1]$.
It is easily seen that, for any   radius $R>|a|$,  $\pi \circ \alpha$ is a loop contained in $B_R^M(\pi(u,v))\subset M$ and that $\pi \circ \alpha$ is not contractible in $B_R^M(\pi(u,v))$. Indeed, if  $\pi \circ \alpha$ were homotopic to the constant loop $c(\cdot) \equiv \pi(u,v)$ then, by the uniqueness of path liftings in the covering space (see for instance \cite[Lemma 54.1, Chapter 9]{Munkres}), the lifts of $c(\cdot)$ and $\pi\circ \alpha$ starting at $(u,v)$ would end at the same point. But this is not true as the lift of $c(\cdot)$ is the constant loop in $\R\times\R^2$ taking value $(u,v)$, whereas the lift of $\pi\circ \alpha$ is $\alpha$, which ends at $(u+a,v)\neq (u,v)$. Therefore $B_R^M(\pi(u,v))\subset M$ is not simply connected for every $R>a$ and every  $(u,v)\in \R\times\R^2$, contradicting the ALSC assumption on $M$. This concludes the proof that for every  $\gamma\in \Gamma$ the corresponding translation $a\in \R$ must vanish. It follows that $\Gamma\subset {\rm O}(3)$ and thus, as in the case $n=0$, we infer that $\Gamma=\{ {\rm Id}\}$ must be trivial and  $(M, g)$ is isometric to $\R^3$.

$\bullet$  $n=2$.  We claim that for any $\gamma\in \Gamma$, the corresponding translation $a\in \R^{2}$ must vanish.
\\Assume by contradiction this is not the case and fix such a $\gamma\in \Gamma$ with corresponding non-trivial translation $0\neq a\in \R^{2}$.
Fix an arbitrary point  $(u,v)\in \R^{2}\times\R$ and consider the curve $\alpha(s) = (u + sa,v)\subset \R^3$ for $s\in [0,1]$. The analogous argument as in the case $n=1$ shows that, for any   radius $R>|a|$,  $\pi \circ \alpha$ is a loop contained in $B_R^M(\pi(u,v))\subset M$ and that $\pi \circ \alpha$ is not contractible in $B_R^M(\pi(u,v))$. This contradicts the ALSC assumption and thus, as in the $n=1$ case, we conclude that $\Gamma=\{ {\rm Id}\}$ must be trivial and $(M, g)$ is isometric to $\R^3$.

$\bullet$  $n=3$.   In this case, the quotient $M = \R^3/ \Gamma$ is compact, contradicting the ALSC assumption. 
\\
Therefore, the only possibility is $\Gamma = \{ {\rm Id}\}$ and  $(M, g)$ isometric to $\R^3$.

\end{proof}

\begin{theorem}\label{thm:1}
Let $(M^3,g)$ be a connected, complete, ALSC Riemannian manifold without boundary and with non-negative scalar curvature.
Assume that every $p\in M$ admits a neighbourhood satisfying \eqref{eq:mHUleq0} or, more generally, that the infinitesimal non-positivity of $m_{H}$ \eqref{eq:HpRiem0} holds for every $p\in M$.

Then $(M^3,g)$ is globally isometric to Euclidean $\R^3$.
\end{theorem}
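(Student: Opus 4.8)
The plan is to derive Theorem \ref{thm:1} as an immediate consequence of the two results that precede it in the excerpt, namely the quasi-local rigidity Theorem \ref{prop:Riem0} and the topological Proposition \ref{prop:ALSCR3}. First I would apply Theorem \ref{prop:Riem0} with the choice $\Omega = M$: the hypothesis that every $p \in M$ admits a neighbourhood satisfying \eqref{eq:mHUleq0} (or more generally the infinitesimal non-positivity \eqref{eq:HpRiem0}), together with the standing assumption $\Sc \geq 0$ on $M$ and the fact that $M$ has no boundary (so $M \setminus \partial M = M$), puts us exactly in the setting of that theorem. The conclusion is that $M$ is locally isometric to Euclidean $\R^3$; equivalently, the Riemann curvature tensor of $(M,g)$ vanishes identically, i.e. $(M^3,g)$ has zero sectional curvature everywhere.

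Next I would invoke the completeness, connectedness, and ALSC hypotheses on $(M,g)$, which are assumed in the statement. Since $(M^3,g)$ is now known to be a connected, complete, ALSC Riemannian manifold without boundary and with zero sectional curvature, Proposition \ref{prop:ALSCR3} applies verbatim and yields that $(M^3,g)$ is globally isometric to Euclidean $\R^3$. This closes the argument.

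In short, the proof is the one-line chaining: the hypotheses feed Theorem \ref{prop:Riem0} to give flatness, and flatness together with the ALSC hypothesis feeds Proposition \ref{prop:ALSCR3} to give the global conclusion. There is no genuine obstacle at this stage, since all the substance has been carried out upstream: the key technical input is the sharp Hawking-mass expansion \eqref{eq:mHSprhow} of Proposition \ref{prop:expHawk} (whose $\rho^3$ and $\rho^5$ coefficients force $\Sc \equiv 0$ and then $S \equiv 0$, hence $\Ric \equiv 0$, hence $\Rm \equiv 0$ in dimension three), and the case-by-case analysis of the rank of the maximal abelian subgroup of the deck group $\Gamma$ in Proposition \ref{prop:ALSCR3} (where the ALSC condition is used to rule out any nontrivial translational part, and compactness of the quotient is excluded in the rank-$3$ case). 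The only point worth a word of care is checking that the hypotheses of Theorem \ref{prop:Riem0} are literally met with $\Omega = M$ — in particular that the local condition is required at every $p \in M = M \setminus \partial M$ — which is immediate here since $\partial M = \emptyset$.

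\begin{proof}
By hypothesis $(M^3,g)$ has non-negative scalar curvature and every $p \in M$ admits a neighbourhood satisfying \eqref{eq:mHUleq0} (or, more generally, the infinitesimal non-positivity condition \eqref{eq:HpRiem0} holds at every $p \in M$). Since $M$ has empty boundary, $M \setminus \partial M = M$, so we may apply Theorem \ref{prop:Riem0} with $\Omega = M$ to conclude that $M$ is locally isometric to Euclidean $\R^3$; in particular $(M^3,g)$ has zero sectional curvature. As $(M^3,g)$ is, by assumption, connected, complete, ALSC and without boundary, Proposition \ref{prop:ALSCR3} now applies and gives that $(M^3,g)$ is globally isometric to Euclidean $\R^3$.
\end{proof}
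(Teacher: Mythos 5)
Your proposal is correct and is essentially identical to the paper's own proof, which also obtains Theorem \ref{thm:1} by applying Theorem \ref{prop:Riem0} with $\Omega=M$ (using $\partial M=\emptyset$) to get flatness and then invoking Proposition \ref{prop:ALSCR3} under the ALSC hypothesis. No gaps; the observation that local isometry to $\R^3$ means vanishing sectional curvature, so Proposition \ref{prop:ALSCR3} applies verbatim, is exactly the intended chaining.
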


 \begin{proof}
 The result follows by the combination of Theorem \ref{prop:Riem0} and Proposition \ref{prop:ALSCR3}.  
 \end{proof}

\section{A lower bound on the Bartnik mass}

In this section we are interested in asymptotically flat Riemannian manifolds  as defined  in the introduction, with non-negative scalar curvature. 
In order to give a lower bound on the Bartnik mass, in the next subsection we show that  perturbed geodesic spheres $S_{p,\rho}(w)$ are outer-minimising. We will then combine the monotonicity of the Bartnik mass \eqref{eq:monmB}, with the fact that Bartnik mass is bounded from below by the Hawking mass \eqref{mBgeqmH} and the geometric expansions of the Hawking mass on perturbed geodesic spheres obtained in the previous sections to get the result. 

\subsection{Some auxiliary lemmas about sets of finite perimeter}

The goal of this section is to establish some auxiliary lemmas that will be useful in the proof of the main result in the next section, that is Theorem \ref{Prop:OuterMin}.
\\For this section we consider a general Riemannian manifold $(M,g)$ of dimension three. Given a Borel subset $E\subset M$ and an open subset $U\subset M$, we denote by
\begin{equation}\label{eq:defPer}
\rP_{g}(E,U):=\sup\left\{ \int_{E} {\rm div}_{g}(X) \, dV_{g} \ : X\text{ is a $C^{1}$ vector field with } {\rm supp}\, X \subset U\setminus \partial M, \, \| X \|_{\infty, g} \leq 1 \right\}
\end{equation} 
the perimeter of $E$ relative to $U$. When $U=M$, we simply write $\rP_{g}(E):=\rP_{g}(E, M)$.  Note that $\partial M$ never contributes to the perimeter.

Given a point $p\in M \setminus \partial M $, let ${\rm Inj}_{p}>0$ be the injectivity radius at $p$ and  denote with $\phi^{p}_{g}:B^{g}_{{\rm Inj}_{p}}(p)\to \R^{3}$ a normal coordinate chart centred at $p$ with respect to the Riemannian metric $g$. It will be convenient to consider the rescaled metrics $g_{\rho}:=\rho^{-2} g$, for $\rho\in (0,1]$. It is a standard fact that, in $g_{\rho}$-normal coordinates centred at $p$, it holds: 
\begin{equation}\label{eq:29}
\|(g_{\rho})_{\mu \nu}-\delta_{\mu \nu} \|_{C^{k}\big(B^{g_{\rho}}_{r \rho^{-1}} (p) \big)} \leq C \, \rho^{2} r^{2}, \quad \forall \rho\in (0, 1], \; r\in (0, {\rm Inj}_{p}], 
\end{equation}
for some suitable $C=C(p,k)>0$.
\\ From the very definition of perimeter \eqref{eq:defPer}, it is easily seen that 
\begin{equation}\label{eq;rescalPer}
\rP_{g_{\rho}}(E,U) = \rho^{-2}\; \rP_g(E,U), \quad \text{for every Borel subset  $E \subset M$  and $U \subset M$ open}.
\end{equation}

We next establish some lemmas linking  the relative perimeter $\rP_{g_{\rho}}(E,B_r^{g_{\rho}}(q))$ of a finite perimeter set $E\subset M$, and the Euclidean relative perimeter  $\rP_{\delta}(\phi_{g_{\rho}}(E),B_r^{\delta}(\phi_{g_{\rho}}(q)))$ of  its image $\phi_{g_{\rho}}(E) \subset \R^3$. We remark that, in such relative perimeter calculations, we can ignore the fact that $E$ may not be contained in the domain of $\phi_{g_{\rho}}$ because it is enough that $B_r^{g_{\rho}}(q)$ is contained there (which, for small $\rho$, it will be).
To this aim,  we will distinguish between the normal coordinate charts centred at either at a point $p$ or $q$ nearby, denoting them by $\phi^p_{g_{\rho}}$ and $\phi^q_{g_{\rho}}$ respectively. 
 
 \begin{lemma}\label{Lem:14}
Fix a point $p\in M \setminus \partial M$.  Then, there exist constants $C=C(p)>0$, $\bar{\rho}=\bar{\rho}(p)>0$ such that for every finite perimeter set $E\subset M$, every $\rho\in (0, \bar{\rho}]$ and every $q\in B^{g_{\rho}}_{10}(p)$ it holds:
	\begin{equation}\label{eq:30}
\left| \rP_{\delta}(\phi^p_{g_{\rho}}(E),B^{\delta}_r(\phi^p_{g_{\rho}}(q))) - \rP_{g_{\rho}}(E,B_r^{g_{\rho}}(q)) \right| \leq  C \, r^{2} \rP_{g_{\rho}}(E,B_r^{g_{\rho}}(q)) + C r^4, \quad \forall r\in (0,1].
	\end{equation}
\end{lemma}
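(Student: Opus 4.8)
The plan is to transport everything to the Euclidean chart $\phi:=\phi^{p}_{g_{\rho}}$ and to compare perimeters there. Set $\widetilde{g}:=\phi_{*}g_{\rho}$, a metric on the open set $\phi\big(B^{g}_{\mathrm{Inj}_{p}}(p)\big)\subset\R^{3}$. Since every ball $B^{g_{\rho}}_{r}(q)$ with $q\in B^{g_{\rho}}_{10}(p)$ and $r\in(0,1]$ is contained in $B^{g_{\rho}}_{11}(p)$, the expansion \eqref{eq:29} applies on this whole region and gives $\|\widetilde{g}_{\mu\nu}-\delta_{\mu\nu}\|_{C^{1}(\phi(B^{g_{\rho}}_{r}(q)))}\le C r^{2}$ (keeping only the bound we shall use). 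Because the relative perimeter is a Riemannian notion, natural under diffeomorphisms, one has $\rP_{g_{\rho}}(E,U)=\rP_{\widetilde{g}}(\phi(E),\phi(U))$ for every Borel $E$ and open $U$, so the statement reduces to a purely Euclidean-plus-perturbation comparison on $\R^{3}$.

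First I would compare $\rP_{\widetilde{g}}$ with $\rP_{\delta}$ over the \emph{same} open set. Using the representation $\rP_{h}(F,V)=\mathcal{H}^{2}_{h}(\partial^{*}F\cap V)$ (the $2$-dimensional measure induced by $h$ on the reduced boundary) together with the pointwise comparison of induced area forms, $d\mathcal{H}^{2}_{\widetilde{g}}=(1+O(\|\widetilde{g}-\delta\|))\,d\mathcal{H}^{2}_{\delta}$ on hypersurfaces, one obtains
\begin{equation*}
\big|\rP_{\widetilde{g}}(F,V)-\rP_{\delta}(F,V)\big|\le C\,\|\widetilde{g}-\delta\|_{C^{0}(V)}\,\rP_{\delta}(F,V)
\end{equation*}
for every finite-perimeter $F$ and open $V$. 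Applying this with $F=\phi(E)$ and $V=\phi(B^{g_{\rho}}_{r}(q))$ and invoking \eqref{eq:29} produces the term $Cr^{2}\rP_{g_{\rho}}(E,B^{g_{\rho}}_{r}(q))$, after also noting $\rP_{\delta}(\phi(E),V)\le(1+Cr^{2})\rP_{g_{\rho}}(E,\phi^{-1}(V))$.

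Next I would handle the mismatch of the two open sets in \eqref{eq:30}, namely $\phi(B^{g_{\rho}}_{r}(q))$ versus $B^{\delta}_{r}(\phi(q))$. Since $\widetilde{g}$ is $C^{1}$-close to $\delta$ and the $g_{\rho}$-geodesic balls are close to affine images of Euclidean balls (the deviation being governed by the Christoffel symbols of $g_{\rho}$, which are $O(\rho^{2})$ on $B^{g_{\rho}}_{11}(p)$), the set $\phi(B^{g_{\rho}}_{r}(q))$ is sandwiched between $B^{\delta}_{r(1-Cr^{2})}(\phi(q))$ and $B^{\delta}_{r(1+Cr^{2})}(\phi(q))$, hence differs from $B^{\delta}_{r}(\phi(q))$ only inside a Euclidean annular shell $A_{q,r}$ of thickness $O(r^{3})$. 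By monotonicity of $V\mapsto\rP_{\delta}(\phi(E),V)$,
\begin{equation*}
\big|\rP_{\delta}(\phi(E),\phi(B^{g_{\rho}}_{r}(q)))-\rP_{\delta}(\phi(E),B^{\delta}_{r}(\phi(q)))\big|\le\rP_{\delta}(\phi(E),A_{q,r}).
\end{equation*}
The inner half of $A_{q,r}$ lies in $B^{\delta}_{r}(\phi(q))\subset\phi\big(B^{g_{\rho}}_{r(1+Cr^{2})}(q)\big)$, so (re-running the metric comparison with the slightly enlarged ball) its contribution is absorbed into the first error term; the outer half lies in a shell of thickness $O(r^{3})$ just outside $B^{\delta}_{r}(\phi(q))$ and is to be estimated by $Cr^{4}$. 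Assembling the three contributions gives \eqref{eq:30}.

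I expect the genuine difficulty to be this last step: controlling $\rP_{\delta}(\phi(E),A_{q,r})$ for an \emph{arbitrary} finite-perimeter set. One cannot in general bound the perimeter inside a thin shell by its cross-section times its thickness — a set whose reduced boundary folds back and forth radially could a priori carry a large perimeter there — and the resolution is exactly the inner/outer split above: the inner part is genuinely contained in a slightly dilated copy of $B^{g_{\rho}}_{r}(q)$ and is therefore controlled by $Cr^{2}\rP_{g_{\rho}}(E,B^{g_{\rho}}_{r}(q))$, while the outer part must be shown to contribute at most $Cr^{4}$ by exploiting the $O(r^{3})$ thickness of the shell (this is where the explicit $Cr^{4}$ term in \eqref{eq:30}, and the restriction $r\le1$, come in). The Riemannian-versus-Euclidean perimeter comparison of the first two paragraphs is, by contrast, routine once the conventions for $\rP_{g}$ and for the induced surface measure are fixed; the only other point requiring care is keeping precise track of the powers $r^{2}$ and $r^{4}$ that \eqref{eq:29} produces.
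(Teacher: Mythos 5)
You correctly spot the obstruction in your own plan, and it is a genuine gap, not a technicality: one cannot bound $\rP_{\delta}(\phi(E), A_{q,r})$ for an arbitrary finite-perimeter set by the thickness of the shell, since the reduced boundary could oscillate radially and carry a large area there. Your inner/outer split dispatches the inner half, but you explicitly leave the outer half (``to be estimated by $Cr^4$'') open, and there is no way to close it as posed — the restriction $r\le 1$ and the $\cO(r^3)$ thickness are not enough for an arbitrary $E$.

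The paper's proof avoids the ball-mismatch problem altogether by choosing the $g_{\rho}$-normal chart \emph{centred at $q$}, not at $p$. Since $\phi^q_{g_{\rho}}$ is a radial isometry at $q$, it maps $B^{g_{\rho}}_r(q)$ \emph{exactly} onto the Euclidean ball $B^{\delta}_r(\phi^q_{g_{\rho}}(q))$; there is no annular shell to estimate. The rest is a pointwise comparison of the divergence integrands (the paper works directly with the divergence definition \eqref{eq:defPer}, not with the Hausdorff-measure representation you invoke, which sidesteps identifying $\widetilde g$- and $\delta$-reduced boundaries): for a $C^1$ vector field $X$ supported in $B^{g_{\rho}}_r(q)$ with $\|X\|_{\infty}\le 1$,
\begin{equation*}
\int_{E\cap B^{g_{\rho}}_r(q)} {\rm div}_{g_{\rho}}X\;dV_{g_{\rho}}
\;=\;
\int_{\phi^q_{g_{\rho}}(E\cap B^{g_{\rho}}_r(q))}\Big[\big(1+\cO_p(r^2)\big)\,{\rm div}_{\delta}X + \cO_p(r)\Big]\;dV_{\delta},
\end{equation*}
where the multiplicative error comes from the metric and volume-form deviation and the additive $\cO_p(r)$ from the Christoffel-symbol term, which vanishes at $q$ and is $\cO(r)$ on the ball. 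Integrating the additive error over a ball of Euclidean volume $\cO(r^3)$ produces the $Cr^4$ term in \eqref{eq:30}; the multiplicative error, after taking the supremum over admissible $X$ and using that $\pm X$ are both admissible, produces $Cr^2\,\rP_{g_{\rho}}(E,B^{g_{\rho}}_r(q))$. The passage from the $q$-centred chart to the $p$-centred chart appearing in the statement is handled separately by Lemma \ref{Lem:29}: the two charts differ from a fixed translation by $\cO(\rho^2)$ in $C^1$ on $B^{g_{\rho}}_{10}(p)$, giving a coarser $C\rho^2\,\rP_{g_{\rho}}(E,B^{g_{\rho}}_{10}(p))$ error that is acceptable in the downstream blow-up. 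The moral for your write-up: centre the normal chart at the centre of the ball, and the thin-shell difficulty you identified never arises.
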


\begin{proof}
Let $q\in B^{g_{\rho}}_{10}(p)$ and consider $g_{\rho}$-normal coordinates centred at  $q$. Notice that, by smooth dependence of initial data in the geodesic equation, the constant appearing in  \eqref{eq:29} (when applied to normal coordinates centred at $q\in B^{g_{\rho}}_{10}(p)$) may be chosen independently on $q$, so it still only depends on $p$.

For a $C^{1}$-vector field $X$ supported in $B_r^{g_{\rho}}(q)$ with $\|X\|_{\infty,\delta}\leq 1$, we thus  have:
\begin{align*}
	\int_{E\cap B_r^{g_{\rho}}(q)} {\rm div}_{g_{\rho}}(X)\ dV_{g_{\rho}} = \int_{\phi^q_{g_{\rho}}(E\cap B_r^{g_{\rho}}(q))}  \left[ (1+ \cO_{p}(r^{2})) {\rm div}_{\delta}(X) + \cO_{p}(r) \right] \ dV_{\delta}.
\end{align*}
We infer that
\begin{align*}
&	 \Big|\int_{E\cap B_r^{g_{\rho}}(q)} {\rm div}_{g_{\rho}}(X)\ dV_{g_{\rho}} - \int_{\phi^q_{g_{\rho}}(E\cap B_r^{g_{\rho}}(q))} {\rm div}_{\delta}(X)\ dV_{\delta}\Big| \leq Cr^2 \int_{\phi^q_{g_{\rho}}(E\cap B_r^{g_{\rho}}(q))}|{\rm div}_{\delta}(X)|\ dV_{\delta} + Cr V_{\delta}\big( B^{\delta}_r(\phi^q_{g_{\rho}}(q))\big) \\
	 & \qquad \leq Cr^2\int_{\phi^q_{g_{\rho}}(E\cap B_r^{g_{\rho}}(q))\cap \{{\rm div}_{\delta}(X)>0\}} {\rm div}_{\delta}(X)\ dV_{\delta} + Cr^2\int_{\phi^q_{g_{\rho}}(E\cap B_r^{g_{\rho}}(q))\cap \{{\rm div}_{\delta}(X)<0\}}- {\rm div}_{\delta}(X)\ dV_{\delta} +C r^{4} \\
	&\qquad \leq Cr^2\rP_{\delta}(\phi^q_{g_{\rho}}(E),\phi^q_{g_{\rho}}(B_r^{g_{\rho}}(q))) + Cr^4 ,
\end{align*}
where in the first inequality we used that $V_{\delta}\big(\phi^q_{g_{\rho}}(E)\cap B_r^{\delta}(\phi^q_{g_{\rho}}(q))\big) \leq V_{\delta}\big(B_r^{\delta}(\phi^q_{g_{\rho}}(q)) \big) \leq Cr^3$, and in the third inequality we have used that $X$ is admissible in the definition of perimeter if and only if $-X$ is too.	
\\Taking the supremum over all $C^{1}$-vector fields $X$ supported in $B_r^{g_{\rho}}(q)$ with $\|X\|_{\infty,\delta}\leq 1$ gives the claim.
\end{proof}

\begin{lemma}\label{Lem:29}
Fix a point $p\in M \setminus \partial M$.  Then, there exist constants $C=C(p)>0$, $\bar{\rho}=\bar{\rho}(p)>0$ such that for every finite perimeter set $E\subset M$, every $\rho\in (0, \bar{\rho}]$ and every $q\in B^{g_{\rho}}_{10}(p)$ it holds:
\begin{equation}\label{eq:151}
	\left| \rP_{\delta}\big(\phi^p_{g_{\rho}}(E), B_r^{\delta}(\phi^p_{g_{\rho}}(q))\big) - \rP_{\delta}\big(\phi^q_{g_{\rho}}(E), B_r^{\delta}(\phi^q_{g_{\rho}}(q))\big) \right|\leq  C \,\rP_{g_{\rho}} (E, B^{g_{\rho}}_{10}(p) )  \rho^2, \quad \forall r\in (0,1].
\end{equation}
\end{lemma}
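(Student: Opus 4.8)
The plan is to compare two normal coordinate charts centred at nearby points $p$ and $q$, and to control the discrepancy between the corresponding Euclidean relative perimeters by a change-of-variables argument. The key point is that the transition map $\Psi := \phi^q_{g_\rho}\circ (\phi^p_{g_\rho})^{-1}$, defined on a neighbourhood of $\phi^p_{g_\rho}(B^{g_\rho}_{10}(p))$ in $\R^3$, is a diffeomorphism that is uniformly close to a rigid motion of $\R^3$. More precisely, since both charts are normal coordinates for the same metric $g_\rho$ which (by \eqref{eq:29}) is $C^k$-close to the flat metric $\delta$ on balls of radius $O(1)$ in the $g_\rho$-scale, standard ODE-dependence estimates for the geodesic/exponential maps give
\begin{equation}\label{eq:transEst}
\| \Psi - A \|_{C^1} \leq C \rho^2
\end{equation}
for some $A \in \mathrm{O}(3)\ltimes\R^3$ (an isometry of $(\R^3,\delta)$), with $C=C(p)$ and $\rho\leq\bar\rho(p)$. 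In particular $D\Psi = A_0 + O(\rho^2)$ in $C^0$, where $A_0\in\mathrm{O}(3)$ is the linear part, and the Jacobian of $\Psi$ equals $1+O(\rho^2)$.

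The first step is to make \eqref{eq:transEst} precise. Writing $E\cap B := E\cap B^{g_\rho}_{10}(p)$, I note that $\phi^q_{g_\rho}(E)$ and $\phi^p_{g_\rho}(E)$ restricted to the relevant balls are related by $\phi^q_{g_\rho}(E) = \Psi(\phi^p_{g_\rho}(E))$, and $B_r^\delta(\phi^q_{g_\rho}(q)) = B_r^\delta(\Psi(\phi^p_{g_\rho}(q)))$. The second step is the perimeter transformation: for a $C^1$ diffeomorphism $\Psi$ of $\R^3$ and any set $F$ of finite perimeter,
\[
\rP_\delta(\Psi(F), \Psi(U)) = \sup\Big\{ \int_F \mathrm{div}_\delta\big((\det D\Psi)\, (D\Psi)^{-1}(Y\circ\Psi)\big)\, dV_\delta \; :\; \|Y\|_\infty\leq 1,\ \mathrm{supp}\,Y\subset\Psi(U)\Big\},
\]
by the usual pushforward-of-vector-field formula (Piola-type identity). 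Pulling back $Y$ via $\Psi$ converts this into an integral over $F\cap U$ of $\mathrm{div}_\delta$ applied to a vector field $X := (\det D\Psi)(D\Psi)^{-1}(Y\circ\Psi)$ with $\|X\|_\infty \leq \|(\det D\Psi)(D\Psi)^{-1}\|_\infty \leq 1 + C\rho^2$ thanks to \eqref{eq:transEst}. Hence
\[
\rP_\delta(\Psi(F),\Psi(U)) \leq (1+C\rho^2)\,\rP_\delta(F,U),
\]
and symmetrically (applying the same to $\Psi^{-1}$) the reverse bound holds, giving $|\rP_\delta(\Psi(F),\Psi(U)) - \rP_\delta(F,U)| \leq C\rho^2\,\rP_\delta(F,U)$. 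Applying this with $F = \phi^p_{g_\rho}(E)$ and $U = B_r^\delta(\phi^p_{g_\rho}(q))$, then bounding $\rP_\delta(\phi^p_{g_\rho}(E), B_r^\delta(\phi^p_{g_\rho}(q)))$ by $\rP_\delta(\phi^p_{g_\rho}(E), \phi^p_{g_\rho}(B^{g_\rho}_{10}(p)))$ (since $B_r^\delta(\phi^p_{g_\rho}(q))\subset\phi^p_{g_\rho}(B^{g_\rho}_{10}(p))$ for $r\leq 1$, $q\in B^{g_\rho}_{10}(p)$, up to enlarging constants), and finally invoking Lemma \ref{Lem:14} (or the elementary comparison $\rP_\delta \leq (1+C\rho^2)\rP_{g_\rho}$ from \eqref{eq:29}) to replace the Euclidean perimeter by $\rP_{g_\rho}(E, B^{g_\rho}_{10}(p))$, yields \eqref{eq:151}.

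The one subtlety to watch — and the step I expect to require the most care — is the image of the ball: $\Psi$ maps $B_r^\delta(\phi^p_{g_\rho}(q))$ not exactly onto $B_r^\delta(\phi^q_{g_\rho}(q))$ but onto a region $C\rho^2 r$-close to it in Hausdorff distance, so one must absorb the mismatch of the two balls. This contributes an error controlled by the perimeter of $E$ in a thin annular neighbourhood, which is $\leq \rP_{g_\rho}(E, B^{g_\rho}_{10}(p))$ and carries a factor that can be bounded by $C\rho^2$ after using \eqref{eq:29} to compare $g_\rho$-balls with $\delta$-balls; since the right-hand side of \eqref{eq:151} already allows a full factor $\rP_{g_\rho}(E,B^{g_\rho}_{10}(p))\rho^2$ (not $r$-localised), this is comfortably within budget. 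Collecting the estimates and taking $\bar\rho$ small enough that all the charts are defined and \eqref{eq:29} applies on the relevant scales completes the proof.
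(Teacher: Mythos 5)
Your route is the same as the paper's: the paper's proof consists precisely of your estimate \eqref{eq:transEst}, in the sharper form $\norm{\phi^p_{g_{\rho}} - \mathcal{T}_{\phi^p_{g_{\rho}}(q)}\circ \phi^q_{g_{\rho}}}_{C^1(B^{g_{\rho}}_{10}(p))}\leq C(p)\rho^2$ (the isometry is in fact a translation, obtained by comparing $\exp^{g_{\rho}}_q$ with $\exp^{\delta}_{\phi^p_{g_{\rho}}(q)}$ via \eqref{eq:29}), followed by the one-line remark that, since translations are Euclidean isometries, ``it is not hard to see that the last estimate implies the claim''. Your Piola-type pushforward computation, giving $\abs{\rP_{\delta}(\Psi(F),\Psi(U))-\rP_{\delta}(F,U)}\leq C\rho^2\,\rP_{\delta}(F,U)$ for the transition map $\Psi$, together with Lemma \ref{Lem:14} (or directly \eqref{eq:29}) to replace $\rP_{\delta}$ by $\rP_{g_{\rho}}(E,B^{g_{\rho}}_{10}(p))$, is a correct and welcome way of making that last remark precise.

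The gap is exactly at the point you flag, and the justification you offer there does not work. The mismatch between $\Psi\big(B^{\delta}_r(\phi^p_{g_{\rho}}(q))\big)$ and $B^{\delta}_r(\phi^q_{g_{\rho}}(q))$ is absorbed, in your argument, by ``the perimeter of $E$ in a thin annular neighbourhood'', which you claim ``carries a factor that can be bounded by $C\rho^2$''. That is false for a general finite perimeter set: the perimeter measure $\Vert \partial^* \phi^q_{g_{\rho}}(E)\Vert$ can charge an annulus of width $C\rho^2$ around the sphere $\{|x|=r\}$ with mass comparable to the \emph{whole} of $\rP_{g_{\rho}}(E,B^{g_{\rho}}_{10}(p))$, for instance when $\partial E$ runs along the geodesic sphere of radius $r$ about $q$. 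Concretely, take $E=B^{g_{\rho}}_r(q)$: then $\rP_{\delta}\big(\phi^q_{g_{\rho}}(E),B^{\delta}_r(\phi^q_{g_{\rho}}(q))\big)=0$ because the boundary is exactly the sphere $\{|x|=r\}$, whereas in the $p$-chart a portion of $\phi^p_{g_{\rho}}(\partial E)$ of area of order $r^2$ may lie strictly inside the open ball $B^{\delta}_r(\phi^p_{g_{\rho}}(q))$ (this happens whenever $d_{g_{\rho}}(q,\cdot)$ exceeds the coordinate distance on a solid angle of directions, e.g.\ in the presence of negative sectional curvature), so the left-hand side of \eqref{eq:151} can be of order $r^2$ while your budget is only $C\rho^2 r^2$. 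What your sandwich honestly yields is the radius-adjusted comparison $\rP_{\delta}\big(\phi^q_{g_{\rho}}(E),B^{\delta}_{r-C\rho^2}(\phi^q_{g_{\rho}}(q))\big)-C\rho^2\rP_{g_{\rho}}(E,B^{g_{\rho}}_{10}(p)) \leq \rP_{\delta}\big(\phi^p_{g_{\rho}}(E),B^{\delta}_{r}(\phi^p_{g_{\rho}}(q))\big) \leq \rP_{\delta}\big(\phi^q_{g_{\rho}}(E),B^{\delta}_{r+C\rho^2}(\phi^q_{g_{\rho}}(q))\big)+C\rho^2\rP_{g_{\rho}}(E,B^{g_{\rho}}_{10}(p))$, and passing from this to \eqref{eq:151} as stated requires controlling the perimeter in the transition annulus, which needs extra information on $E$ (e.g.\ the density bounds \eqref{eq:157}, \eqref{eq:AreaSprho} available for the sets actually used in Theorem \ref{Prop:OuterMin}) or a restatement of the lemma with perturbed radii. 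To be fair, the paper's own proof is silent on precisely this step, so your attempt reproduces the paper's strategy; but as written, the annulus-absorption argument is the one step that would fail.
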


\begin{proof}
From \eqref{eq:29} and the smooth dependence on coefficients in the geodesic equation, it is standard to check that for any point $q\in B^{g_{\rho}}_{10}(p)$  the $g_{\rho}$-exponential map satisfies:

\begin{equation*}
	\norm{\exp^{g_{\rho}}_{q} - \exp^{\delta}_{\phi^p_{g_{\rho}}(q)}}_{C^1\big(B^{g_{\rho}}_{10}(p)\big)} \leq C(p) \, \rho^{2}, 
\end{equation*}
where we consider both maps (for $\exp^{g_{\rho}}_{q}$, via the pullback metric) as diffeomorphisms on a ball in $\R^3$. Note that $\exp^{\delta}_{\phi^p_{g_{\rho}}(q)} = \mathcal{T}_{\phi^p_{g_{\rho}}(q)}$, where $\mathcal{T}_{\phi^p_{g_{\rho}}(q)}$ is the translation by the vector $\phi^p_{g_{\rho}}(q)$. Since the normal coordinate chart is the inverse of the exponential map, this means:
\begin{equation*}
	\norm{\phi^{q}_{g_{\rho}} - \mathcal{T}_{-\phi^p_{g_{\rho}}(q)}}_{C^1\big(B^{g_{\rho}}_{10}(p)\big)}\leq C(p) \, \rho^{2}.
\end{equation*}
It follows that 
$$
	\norm{\phi^p_{g_{\rho}} - \mathcal{T}_{\phi^p_{g_{\rho}}(q)} \circ \phi^q_{g_{\rho}}}_{C^1\big(B^{g_{\rho}}_{10}(p)\big)} \leq C(p) \, \rho^{2}.
$$
Since the translation is an isometry of $\R^{3}$, it is not hard to see that the last estimate implies the claim.
\end{proof}

We next recall a well known consequence of the monotonicity formula for a finite perimeter set which is stationary for the perimeter functional; actually this holds more generally for stationary rectifiable varifolds, see for instance \cite[Chap. 4.3]{Simon}.

\begin{lemma}\label{Lem:7}
	Let $(M,g)$ be a Riemannian manifold, $U\subset M\setminus \partial M$ be a relatively compact open  subset, and $E\subset M$ be a set of finite perimeter which is stationary for $\rP_{g}(\cdot, U)$,   the perimeter functional relative to $U$ (i.e. zero first variation restricted to $U$). Then there exist  $C=C(U, \rP_{g}(E,U))>0, r_0=r_{0}(U, \rP_{g}(E,U)) > 0$ such that
		\begin{equation}\label{eq:6}
		\rP_{g}(E,B^{g}_r(q)) \leq Cr^2, \quad \text{for all $B^{g}_{r}(q)\subset U$ and all } r\in (0,r_{0}].
	\end{equation}
\end{lemma}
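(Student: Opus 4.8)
The plan is to recognise $E$, via its reduced boundary, as a stationary $2$-varifold in $U$ and then to feed the monotonicity formula into an upper density estimate. First I would set up the varifold: let $V$ be the multiplicity-one integral $2$-varifold carried by the reduced boundary $\partial^{*}E$, so that its weight measure is $\mathcal{H}^{2}$ restricted to $\partial^{*}E$ and, by De Giorgi's structure theorem, $\rP_{g}(E,W)=\mathcal{H}^{2}(\partial^{*}E\cap W)=\|V\|(W)$ for every open $W\subset M\setminus\partial M$; in particular $\|V\|(U)=\rP_{g}(E,U)<\infty$. For every $C^{1}$ vector field $X$ with compact support in $U\setminus\partial M$, the first variation of the relative perimeter along the flow of $X$ equals the first variation $\delta V(X)=\int\mathrm{div}_{\partial^{*}E}X\,d\|V\|$ of $V$; hence the hypothesis that $E$ be stationary for $\rP_{g}(\cdot,U)$ means exactly that $\delta V(X)=0$ for all such $X$, i.e. that $V$ is a stationary varifold in $U$ (note $U\cap\partial M=\emptyset$).

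Next I would invoke the monotonicity formula in the Riemannian setting. Fix $q\in U$ and work in $g$-normal coordinates centred at $q$: since $\overline{U}\Subset M\setminus\partial M$ is compact, the injectivity radius is bounded below on $\overline{U}$, and uniformly for $q\in\overline{U}$ the metric satisfies $g_{\mu\nu}=\delta_{\mu\nu}+O(|x|^{2})$ with Christoffel symbols of size $O(|x|)$, all constants depending only on $U$. Inserting the standard family of radially truncated vector fields into the identity $\delta V\equiv 0$ and estimating the error terms coming from the deviation of $g$ from the Euclidean metric — exactly as in the classical Euclidean proof (see \cite[Chap.~4.3]{Simon}), with the curvature contributions absorbed into an exponential — one obtains constants $\Lambda=\Lambda(U)\geq 0$ and $r_{1}=r_{1}(U)>0$ such that
\[
\rho\ \longmapsto\ e^{\Lambda\rho}\,\frac{\|V\|\big(B^{g}_{\rho}(q)\big)}{\rho^{2}}
\]
is non-decreasing on $\{\rho\in(0,r_{1}]:\ B^{g}_{\rho}(q)\subset U\}$. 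This monotonicity step, with the explicit curvature correction uniform over $U$, is where essentially all the work sits; the remaining dependence on $\rP_{g}(E,U)$ enters only through $\|V\|(U)$.

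Finally I would conclude. Set $r_{0}:=r_{1}$; given $q$ and $r\in(0,r_{0}]$ with $B^{g}_{r}(q)\subset U$, put $R:=\min\{r_{1},\mathrm{dist}_{g}(q,M\setminus U)\}$, so that $r<R\leq r_{1}$ and $B^{g}_{R}(q)\subset U$. Applying the monotonicity between radii $r$ and $R$ and then bounding $\|V\|(B^{g}_{R}(q))\leq\|V\|(U)=\rP_{g}(E,U)$ yields
\[
\rP_{g}(E,B^{g}_{r}(q))=\|V\|\big(B^{g}_{r}(q)\big)\ \leq\ e^{\Lambda(R-r)}\,\frac{r^{2}}{R^{2}}\,\|V\|\big(B^{g}_{R}(q)\big)\ \leq\ \frac{e^{\Lambda r_{1}}\,\rP_{g}(E,U)}{R^{2}}\,r^{2}.
\]
In the situations where the lemma gets applied the point $q$ stays a definite distance inside $U$, say $\mathrm{dist}_{g}(q,M\setminus U)\geq c(U)>0$, so $R\geq\min\{r_{1},c(U)\}=:\rho_{*}>0$ and the right-hand side is at most $Cr^{2}$ with $C:=e^{\Lambda r_{1}}\rP_{g}(E,U)/\rho_{*}^{2}$, depending only on $U$ and $\rP_{g}(E,U)$, as required. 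The main obstacle is the curved monotonicity formula of the second step — checking that every curvature-induced error can be packaged into the single exponential factor $e^{\Lambda\rho}$, which uses the lower injectivity-radius bound and the $C^{1}$ control of the metric on $\overline{U}$; a minor caveat is that the clean bound $Cr^{2}$ requires the comparison radius $R$ to stay bounded below, i.e. $q$ to lie a definite distance inside $U$, which is automatic in each use of the lemma here.
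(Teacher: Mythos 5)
Your argument is exactly the standard monotonicity-formula proof that the paper itself invokes (the paper gives no proof but simply refers to \cite[Chap.\ 4.3]{Simon} for the monotonicity formula for stationary varifolds), so the approach is the same. You are also right to flag the caveat at the end: as stated, the lemma's constant $C=C(U,\rP_g(E,U))$ cannot control the ratio $\rP_g(E,B_r(q))/r^2$ for $q$ arbitrarily close to $\partial U$, since the comparison radius $R$ then collapses to $r$ and the monotonicity inequality becomes vacuous; this is a genuine imprecision in the statement, but it is harmless in the paper's applications because there $q$ is always kept a definite distance inside the region of stationarity, which is precisely the fix you propose.
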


The following lemma can be proved along the same lines as \cite[(6--9)]{MondinoSpadaro}:

\begin{lemma}\label{Lem:30}
Let $(M,g)$ be a Riemannian manifold, $U\subset M \setminus \partial M$ be a relatively compact open  subset with $C^{1,1}$ boundary.
Then there exist $r_{0}=r_{0}(U)>0$ and $C=C(U)>0$ such that 
	\begin{equation*}
		\rP_{g}(U) \leq \rP_{g}(E) + Cr^3,  \quad \text{ for all finite perimeter sets $E$ with  $E\Delta U\Subset B^{g}_r(q)$, all  $q\in \bar{U}$ and all $r\in (0,r_{0}]$}.  
	\end{equation*}
\end{lemma}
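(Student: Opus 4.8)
The plan is to exploit the $C^{1,1}$ regularity of $\partial U$ to build a global Lipschitz ``calibration'' for the perimeter of $U$, and then compare $\rP_g(U)$ with $\rP_g(E)$ via the Gauss--Green formula, in the spirit of \cite[(6--9)]{MondinoSpadaro}.

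The first step is to construct the calibrating vector field. Since $\partial U$ is a compact $C^{1,1}$ hypersurface inside $M\setminus\partial M$, the signed distance function $b$ to $\partial U$ (say $b<0$ on $U$) is of class $C^{1,1}$ on a tubular neighbourhood $N$ of $\partial U$, satisfies $|\nabla b|\equiv 1$ there, and restricts on $\partial U$ to the outer unit normal $\nu_U$. Fixing a smooth cut-off $\chi$ with $0\le\chi\le1$, $\chi\equiv1$ near $\partial U$ and ${\rm supp}\,\chi\Subset N$, I would set $X:=\chi\,\nabla b$, extended by zero off $N$. Then $X$ is globally Lipschitz on $M$ with compact support in $M\setminus\partial M$, satisfies $|X|\le1$ pointwise, and equals $\nu_U$ along $\partial U$; moreover $\Lambda(U):=\|{\rm div}_g X\|_{L^\infty(M)}<\infty$, and this is exactly where the $C^{1,1}$ (and not merely $C^1$) hypothesis enters, since it forces $\nabla b$ to be Lipschitz and hence ${\rm div}_g X\in L^\infty$. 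The constant $\Lambda(U)$ depends only on $U$.

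The second step is the comparison. Applying the divergence theorem for sets of finite perimeter (valid for Lipschitz compactly supported test fields) to $U$, and using $X=\nu_U$ on $\partial^*U=\partial U$, gives $\rP_g(U)=\int_{\partial U}\langle X,\nu_U\rangle\,d\mathcal H^2=\int_U{\rm div}_g X\,dV_g$; applying it to $E$ and using only $|X|\le1$ gives $\int_E{\rm div}_g X\,dV_g=\int_{\partial^*E}\langle X,\nu_E\rangle\,d\mathcal H^2\le\rP_g(E)$. Subtracting the two relations,
\begin{equation*}
\rP_g(U)-\rP_g(E)\ \le\ \int_M{\rm div}_g X\,(\mathbf 1_U-\mathbf 1_E)\,dV_g\ \le\ \Lambda(U)\,|U\Delta E|_g .
\end{equation*}
Equivalently, this step just records the standard fact that a set with $C^{1,1}$ boundary is a $\Lambda$-minimiser of the perimeter. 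Since $\bar U$ is a compact subset of $M\setminus\partial M$, a routine volume comparison yields $r_0=r_0(U)>0$ and $C_0=C_0(U)>0$ with $|B_r^g(q)|_g\le C_0\,r^3$ for every $q\in\bar U$ and $r\in(0,r_0]$. Because $E\Delta U\Subset B_r^g(q)$ we have $|U\Delta E|_g\le C_0 r^3$, and plugging this into the last display proves the claim with $C:=\Lambda(U)\,C_0$.

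The only point demanding genuine care is the construction of $X$ in the first step: one needs it simultaneously globally Lipschitz (so that ${\rm div}_g X$ is a bounded function), bounded by $1$ \emph{pointwise} (so that $X$ is an admissible competitor in the definition \eqref{eq:defPer} of perimeter), and equal to $\nu_U$ on $\partial U$ (so that the calibration identity for $\rP_g(U)$ holds with equality). The signed-distance construction achieves all three at once precisely because $\partial U$ is $C^{1,1}$; the two applications of Gauss--Green and the elementary volume bound are then routine.
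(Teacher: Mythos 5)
Your proposal is correct and is essentially the argument the paper has in mind: the paper does not write out a proof but refers to \cite[(6--9)]{MondinoSpadaro}, which is exactly this calibration/$\Lambda$-minimality argument (extend the outer normal via the signed distance to a compactly supported Lipschitz field $X$ with $|X|\le 1$ and bounded divergence, apply Gauss--Green to $U$ and to $E$, and bound $|U\Delta E|_g$ by the volume of $B^g_r(q)$). The only point worth recording is that, since the definition \eqref{eq:defPer} uses $C^1$ test fields, one should approximate the Lipschitz field $X$ by mollification (possible as ${\rm div}_g X\in L^\infty$ has compact support) to justify $\int_E {\rm div}_g X\, dV_g\le \rP_g(E)$; this is routine and does not affect the argument.
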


We conclude this short section by recalling a regularity result of Tamanini \cite{Tamanini}, refining previous celebrated works by De Giorgi \cite{DeGiorgi}.
\\ To this aim recall that, given a finite perimeter set $E\subset \R^3$ and an open bounded subset $V\subset \R^{3}$, the excess of $E$ relative to $V$ is defined as
	\begin{equation*}
		\Psi(E,V) := \rP_{\delta}(E,V) - \inf \{\rP_{\delta}(F,V) | F\Delta E \Subset V\}.
	\end{equation*}

\begin{theorem}[\cite{Tamanini}, Theorem 1]\label{thm:5}
	Let $U\subset \R^{3}$ be an open subset  and $E\subset \R^{3}$  be a set of finite perimeter. Assume there exist $\alpha=\alpha(E,U)\in (0,1)$, $C=C(E,U)>0$ and $R=R(E,U)>0$ such that 
	\begin{equation}\label{eq:ExcEstHp}
		\Psi(E,B_r(q)) \leq Cr^{2+2\alpha}, \quad \text{for all $q\in U$ and $r\in (0,R)$.}
	\end{equation}
 Then, up to replacing $E$ by the suitable a.e. representative, it holds that the reduced boundary $\partial^{*} E$ coincides with the topological boundary   $\partial E$, and $\partial E$ is a $C^{1,\alpha}$-hypersurface in $U$.
 \\Moreover, assuming that \eqref{eq:ExcEstHp} holds uniformly for a sequence $(E_k)_{k\in \N}$  convergent to $E_{\infty}$ in $L^{1}_{loc}$-topology, then $E_{\infty}$ satisfies \eqref{eq:ExcEstHp} as well; moreover    for any sequence of points $q_k \in \partial E_k$ converging to $q_{\infty} \in \partial E_{\infty}$,  the outward-pointing unit normal to $\partial E_k$ at $q_k$ converges to the outward-pointing unit normal to $\partial E_{\infty}$ at $q_{\infty}$.
\end{theorem}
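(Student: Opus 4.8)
The statement is the interior regularity theory for almost-minimizers of the perimeter, in the quantitative form of Tamanini; I would prove it along the lines of \cite{Tamanini} (or, in modern language, of Maggi's treatment of $(\Lambda,r_{0})$-perimeter minimizers). The point of \eqref{eq:ExcEstHp} is that it says precisely that $E$ is an \emph{almost-minimizer} of the perimeter in $U$: in ambient dimension $n$ (here $n=3$) the perimeter inside a ball of radius $r$ scales like $r^{n-1}=r^{2}$, so the deficit $Cr^{2+2\alpha}$ is of strictly lower order than the leading term and all of De Giorgi's $\varepsilon$-regularity machinery goes through with a harmless error. First I would fix the good measure-theoretic representative of $E$ (the one for which the topological boundary equals the support of the perimeter measure $\|\partial E\|$). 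Comparing $E$ with $E\cup B_{r}(q)$ and $E\setminus B_{r}(q)$ inside $U$, and using the relative isoperimetric inequality together with the smallness of the deficit, yields uniform two-sided density estimates: there is $c_{0}\in(0,1)$ with $c_{0}r^{2}\leq \rP_{\delta}(E,B_{r}(q))\leq c_{0}^{-1}r^{2}$ and $c_{0}\leq |E\cap B_{r}(q)|/|B_{r}(q)|\leq 1-c_{0}$ for all $q\in\partial E$ and all small $r$.

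The analytic core is the \emph{excess-decay lemma}: there are $\varepsilon_{0}>0$ and $\theta\in(0,1)$, depending only on $\alpha$, such that whenever the spherical tilt excess of $E$ in $B_{r}(q)$ is below $\varepsilon_{0}$ (and $r$ is small), the tilt excess in $B_{\theta r}(q)$ is at most $\tfrac12\theta^{2\alpha}$ times that in $B_{r}(q)$ plus a term $Cr^{2\alpha}$ coming from the deficit. This is De Giorgi's blow-up argument: if it failed one would rescale a sequence of almost-minimizers with vanishing excess so that their normalized ``height functions'' converge — via a Caccioppoli-type inequality and a Lipschitz-approximation step — to a harmonic function on a disk, for which the decay is elementary; the almost-minimality deficit, scaling strictly better, contributes only the additive $Cr^{2\alpha}$. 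Iterating this estimate over dyadic scales and invoking a Campanato-type criterion shows that near every point where the excess is once small, $\partial E$ is a $C^{1,\alpha}$ graph with H\"older-continuous normal; this ``regular'' set equals $\partial^{*}E$ (a $C^{1,\alpha}$ point is a reduced-boundary point, and conversely at a reduced-boundary point the excess vanishes at small scales by De Giorgi's theorem, so such a point is regular). Its complement in $\partial E$, the singular set, has Hausdorff dimension at most $n-8$ by Federer's dimension-reduction argument together with the nonexistence of singular area-minimizing cones in $\R^{k}$ for $k\leq 7$; this is negative, hence the singular set is empty, when $n=3$. Therefore $\partial^{*}E=\partial E$ and $\partial E$ is $C^{1,\alpha}$ throughout $U$, which is the first assertion.

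For the stability addendum, suppose \eqref{eq:ExcEstHp} holds for $(E_{k})$ with $k$-independent constants and $E_{k}\to E_{\infty}$ in $L^{1}_{loc}$. Lower semicontinuity of the perimeter, combined with a standard gluing of competitors across spheres $\partial B_{s}(q)$ — choosing $s$ so that $\mathcal H^{2}\big(\partial B_{s}(q)\cap (E_{k}\Delta E_{\infty})\big)\to 0$ along a subsequence, which is possible by the coarea formula and $L^{1}$-convergence — shows that the almost-minimality deficit is lower semicontinuous under $L^{1}_{loc}$-convergence; hence $\Psi(E_{\infty},B_{r}(q))\leq\liminf_{k}\Psi(E_{k},B_{r}(q))\leq Cr^{2+2\alpha}$, i.e.\ $E_{\infty}$ satisfies \eqref{eq:ExcEstHp} with the same constants (this is the computation of \cite[(6--9)]{MondinoSpadaro} applied to balls). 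Finally, since all constants in the density estimates and in the excess-decay lemma depend only on the uniform data in \eqref{eq:ExcEstHp} (and on a uniform perimeter bound on the relatively compact region under consideration, itself a consequence of the density estimates), the boundaries $\partial E_{k}$ are, near $q_{\infty}$, graphs of functions $u_{k}$ over a common hyperplane with $C^{1,\alpha}$-norms bounded uniformly in $k$. By Arzel\`a--Ascoli the $u_{k}$ subconverge in $C^{1}$ to some $u_{\infty}\in C^{1,\alpha}$, and the $L^{1}_{loc}$-convergence forces the limit graph to coincide with $\partial E_{\infty}$; hence $u_{k}\to u_{\infty}$ in $C^{1}$ along the full sequence. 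Writing $q_{k}=(x_{k}',u_{k}(x_{k}'))$ and $q_{\infty}=(x_{\infty}',u_{\infty}(x_{\infty}'))$ gives $x_{k}'\to x_{\infty}'$ and $\nabla u_{k}(x_{k}')\to\nabla u_{\infty}(x_{\infty}')$, which is exactly the asserted convergence of outward unit normals.

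The one genuinely non-routine ingredient is the excess-decay lemma, i.e.\ pushing De Giorgi's harmonic-approximation scheme through uniformly for the almost-minimizing class with the $r^{2+2\alpha}$ deficit (Caccioppoli inequality, Lipschitz approximation, compactness, iteration): this is precisely Tamanini's refinement of De Giorgi's theorem, and essentially all the analytic work lies there. The density estimates, the dimension reduction, the stability of the class, and the Arzel\`a--Ascoli argument for the normals are all standard bookkeeping once that lemma is in hand; accordingly, in the body of the paper one simply invokes \cite{Tamanini}.
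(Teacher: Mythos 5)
The paper does not prove Theorem~\ref{thm:5}: it is recalled verbatim from Tamanini \cite{Tamanini}, refining De Giorgi \cite{DeGiorgi}, and then used as a black box in Step 3 of the proof of Theorem~\ref{Prop:OuterMin}, so there is no in-paper argument to compare your sketch against. Your outline is a faithful reconstruction of the almost-minimizer regularity theory underlying the citation --- interpretation of \eqref{eq:ExcEstHp} as a $(\Lambda,r_0)$-type almost-minimality condition with deficit $O(r^{2+2\alpha})$, density estimates, De Giorgi's excess-decay lemma via Caccioppoli/Lipschitz approximation/harmonic blow-up, Campanato iteration yielding $C^{1,\alpha}$ regularity, Federer dimension reduction giving an empty singular set in $\R^3$, closure of the almost-minimizing class under $L^1_{loc}$-limits by gluing competitors across good spheres, and Arzel\`a--Ascoli for the normals --- and you correctly observe at the end that, accordingly, the paper simply invokes the reference.
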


\subsection{Perturbed geodesic spheres are outer-minimising}
Let $(M,g)$ be a Riemannian manifold. For a Borel subset $\Omega\subset M$ we denote with $ |\Omega|_{g}$ its volume and, if in addition it is a set of finite perimeter, with  $\rP_{g}(\Omega)$ its perimeter.
\\The \emph{isoperimetric profile function} $\mathcal I_{(M,g)}:[0,\infty)\to [0,\infty)$ of $(M,g)$ is defined by
\begin{equation}\label{eq:defIsopProf}
\mathcal I_{(M,g)}(v):=\inf\{\rP_{g}(\Omega)\, :\, \Omega \subset M \text{ is a finite perimeter set with } |\Omega|_{g}=v   \}.
\end{equation}
If  $(M,g)$ is an AF, complete,  three dimensional Riemannian manifold, the following holds:
\begin{equation}\label{eq:Ismallv}
\lim_{v\downarrow 0} v^{-2/3} \mathcal I_{(M,g)}(v)= 
\begin{cases}
(36 \pi)^{1/3} &\quad \text{ if } \partial M =\emptyset \\
(18 \pi)^{1/3} & \quad \text{ if } \partial M \neq \emptyset. 
\end{cases}
\end{equation}
This can be proved along the same lines as  \cite{NardCV}, noticing that the AF assumption guarantees that the pointed limit manifolds at infinity used in the proof of  \cite{NardCV} coincide with the Euclidean space $\R^{3}$ (thus one can relax the assumption of $C^{4,\alpha}$ bounded geometry, used in \cite{NardCV} to guarantee that the limit manifolds at infinity are $C^{3,\alpha}$ with $C^{2,\alpha}$ Riemannian metric).

\begin{lemma}
Let $(M,g)$ be an AF, complete,  three dimensional Riemannian manifold with (possibly empty) horizon boundary. Then, for every $V_{0}>0$  there exists $C=C(V_{0})>0$ such that 
\begin{equation}\label{eq:EII}
\rP_{g}(\Omega) \geq C\,  |\Omega|_{g}^{2/3}, \quad \text{for every $\Omega\subset M$ subset of finite perimeter, with $ |\Omega|_{g}\in (0,V_{0}]$}.
\end{equation}
\end{lemma}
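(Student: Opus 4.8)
The plan is to rephrase the asserted estimate as a lower bound for the isoperimetric profile, and then to combine the known asymptotic behaviour \eqref{eq:Ismallv} at volume zero with a compactness argument for the remaining range of volumes. Observe first that the inequality $\rP_{g}(\Omega) \geq C\, |\Omega|_{g}^{2/3}$ for all finite perimeter sets $\Omega$ with $|\Omega|_{g} \in (0, V_{0}]$ is equivalent to $\mathcal{I}_{(M,g)}(v) \geq C v^{2/3}$ for all $v \in (0, V_{0}]$, i.e.\ to a uniform positive lower bound for the function $v \mapsto v^{-2/3}\, \mathcal{I}_{(M,g)}(v)$ on $(0, V_{0}]$; this is immediate from the definition \eqref{eq:defIsopProf} of the isoperimetric profile.

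By \eqref{eq:Ismallv} we have $\lim_{v \downarrow 0} v^{-2/3}\mathcal{I}_{(M,g)}(v) \in \{(36\pi)^{1/3},\, (18\pi)^{1/3}\}$, which is in particular strictly positive, so there exist $v_{0} \in (0, V_{0}]$ and $c_{0} > 0$ with $\mathcal{I}_{(M,g)}(v) \geq c_{0} v^{2/3}$ for every $v \in (0, v_{0}]$. It then remains to bound $\mathcal{I}_{(M,g)}$ from below on the compact interval $[v_{0}, V_{0}]$: if we establish that $m := \inf\{\rP_{g}(\Omega)\, :\, \Omega \subset M \text{ finite perimeter},\ |\Omega|_{g} \in [v_{0}, V_{0}]\} > 0$, then for $v \in [v_{0}, V_{0}]$ one gets $\mathcal{I}_{(M,g)}(v) \geq m \geq (m/V_{0}^{2/3})\, v^{2/3}$, and the lemma follows with $C := \min\{c_{0},\, m/V_{0}^{2/3}\}$.

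To prove $m > 0$ I would use the direct method together with a concentration–compactness alternative. Pick a sequence $(\Omega_{k})$ with $v_{k} := |\Omega_{k}|_{g} \in [v_{0}, V_{0}]$, $v_{k} \to v_{\infty} \in [v_{0}, V_{0}]$ and $\rP_{g}(\Omega_{k}) \to m$, and suppose for contradiction that $m = 0$. Since $\mathbf{1}_{\Omega_{k}}$ is bounded in $BV_{\mathrm{loc}}(M)$, a diagonal extraction gives a finite perimeter set $\Omega_{\infty}$ with $\mathbf{1}_{\Omega_{k}} \to \mathbf{1}_{\Omega_{\infty}}$ in $L^{1}_{\mathrm{loc}}$, with $|\Omega_{\infty}|_{g} \leq v_{\infty}$ and, by lower semicontinuity of the perimeter, $\rP_{g}(\Omega_{\infty}) = 0$. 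If no mass escapes to infinity, i.e.\ $|\Omega_{\infty}|_{g} = v_{\infty} \geq v_{0} > 0$, then $\mathbf{1}_{\Omega_{\infty}}$ has vanishing total variation and is therefore a.e.\ constant on the connected open set $M \setminus \partial M$; since $M$ is complete, connected and non-compact, $|M|_{g} = \infty$, so $0 < |\Omega_{\infty}|_{g} < |M|_{g}$ is impossible, a contradiction. If instead a definite amount of mass $\mu := v_{\infty} - |\Omega_{\infty}|_{g} > 0$ escapes, choose a metric ball $B_{R}^{g}(o) \supset \mathcal{K}$ so large that $M \setminus \overline{B_{R}^{g}(o)}$ lies in the asymptotically flat chart and the metric there is uniformly close to $\delta$; then for $k$ large $|\Omega_{k} \setminus B_{R}^{g}(o)|_{g} \geq \mu/2$, and the sets $F_{k} := \Omega_{k} \cap (M \setminus \overline{B_{R}^{g}(o)})$ satisfy $|F_{k}|_{g} \geq \mu/2$ and $\rP_{g}(F_{k}, M \setminus \overline{B_{R}^{g}(o)}) \leq \rP_{g}(\Omega_{k}) \to 0$, contradicting the relative isoperimetric inequality on the exterior region $M \setminus \overline{B_{R}^{g}(o)}$ (which, up to a uniform constant, reduces to the Euclidean relative isoperimetric inequality in $\Rtre \setminus \overline{B_{1}(0)}$ since the metric there is uniformly close to $\delta$), because that inequality yields $\rP_{g}(F_{k}, M \setminus \overline{B_{R}^{g}(o)}) \geq c\,(\mu/2)^{2/3} > 0$. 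Hence $m > 0$.

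The step I expect to be the main obstacle is precisely the mass-escape alternative: one must make sure that a set of fixed positive volume sitting in the asymptotic region cannot conceal most of its boundary on the artificial sphere $\partial B_{R}^{g}(o)$, which is why one genuinely needs a \emph{relative} isoperimetric inequality for exterior domains rather than the plain Euclidean one. This is a standard fact for exterior domains of convex bodies, and the asymptotic flatness provides the uniform $C^{1}$-closeness to $\delta$ needed to transfer it with a controlled constant; alternatively, the whole $[v_{0}, V_{0}]$ step can be bypassed by quoting that the isoperimetric profile of a manifold of bounded geometry (such as an AF manifold) is continuous and strictly positive on $(0,\infty)$, which makes the positive lower bound on the compact interval immediate. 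Everything else — the reformulation via the profile, the use of \eqref{eq:Ismallv}, and the $BV$-compactness with lower semicontinuity — is routine.
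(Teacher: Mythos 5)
Your argument is correct, but it follows a genuinely different route from the paper for the main step. The paper's proof is essentially a two-line citation: it invokes the result of Flores--Nardulli that the isoperimetric profile $\mathcal I_{(M,g)}$ is a continuous (and positive) function on $(0,\infty)$, and combines this with the small-volume asymptotics \eqref{eq:Ismallv} to conclude that $v\mapsto v^{-2/3}\mathcal I_{(M,g)}(v)$ is continuous and non-vanishing on $(0,V_0]$, hence bounded below --- this is precisely the ``bypass'' you mention in your last sentence. You instead make the compact-interval step self-contained: after the same reformulation via the profile and the same use of \eqref{eq:Ismallv} near $v=0$, you rule out $\inf\{\rP_g(\Omega): |\Omega|_g\in[v_0,V_0]\}=0$ by a concentration--compactness dichotomy, handling the non-escape case by the rigidity of zero total variation on the connected interior together with $|M|_g=\infty$, and the mass-escape case by a relative isoperimetric inequality for the exterior region (Euclidean exterior of a convex body, transferred to the AF end via uniform closeness of $g$ to $\delta$). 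What your approach buys is independence from the quoted continuity theorem, at the price of importing the exterior relative isoperimetric inequality and of checking the metric-comparison details on the end; the paper's approach is shorter but leans entirely on the cited profile results. One small inaccuracy: you justify $|M|_g=\infty$ by ``complete, connected and non-compact'', which is not sufficient in general (complete non-compact manifolds of finite volume exist); here the infinite volume follows instead directly from the asymptotically flat end, so the conclusion stands with the correct justification.
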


\begin{proof}
From \cite{FloresNardulli}  we know that  $\mathcal I:(0,\infty)\to (0,\infty)$  is a continuous function.
Using \eqref{eq:Ismallv}, we conclude that  the function $[0,V_{0}]\ni v\mapsto v^{-2/3} \mathcal I_{(M,g)}(v)$ is continuous and never vanishes, giving the claim \eqref{eq:EII}.
\end{proof}


The goal of the present section is to prove the next theorem, which will allow us to use the expansion for the Hawking mass of perturbed spheres in order to get a lower bound on the Bartnik mass in Theorem \ref{thm:7-8}.

\begin{theorem}\label{Prop:OuterMin}
Let $(M,g)$ be an AF, complete,  three dimensional Riemannian manifold  with non-negative scalar curvature and with (possibly empty) horizon boundary $\partial M$. Fix $p\in M\setminus \partial M$. Then there exist $\rho_{0}=\rho_{0}(p)>0, r_{0}=r_{0}(p)>0$ such that the perturbed geodesic spheres $S_{p,\rho}(w)$ are outer-minimising for every $\rho\in (0,\rho_{0}]$ and every $w\in C^{1}({\mathbb S}^{2})$ with $\|w\|_{C^{1}({\mathbb S}^{2})}\leq r_{0}$. 
\end{theorem}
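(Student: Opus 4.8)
The plan is to argue by contradiction via a blow-up. Suppose the claim fails; then there are sequences $\rho_k\downarrow 0$, $w_k\in C^1(\mathbb S^2)$ with $\|w_k\|_{C^1}\to 0$ (one can diagonalize so that both parameters degenerate), and competitor sets $\Omega_k'\supset \Omega_k := $ the region enclosed by $S_{p,\rho_k}(w_k)$, with $\Omega_k'$ of finite perimeter and finite volume and $\rP_g(\Omega_k') < \rP_g(\partial\Omega_k)$. The first step is to localize the failure: using the isoperimetric lower bound \eqref{eq:EII} together with the scaling \eqref{eq;rescalPer}, one shows the competitor $\Omega_k'$ cannot ``escape to infinity'' or swallow a definite amount of volume; more precisely, replacing $\Omega_k'$ by $\Omega_k'\cap B^g_{C\rho_k}(p)$ for a fixed large constant $C$ (and adding back $\Omega_k$) still decreases the perimeter for $k$ large, because the isoperimetric inequality forces the perimeter of $\Omega_k'\setminus B^g_{C\rho_k}(p)$ to dominate $|\Omega_k'\setminus B^g_{C\rho_k}(p)|^{2/3}$, which is incompatible with the perimeter budget $\rP_g(\partial\Omega_k)=O(\rho_k^2)$ unless that exterior piece has volume $O(\rho_k^3)$, and then a cut-and-paste estimate (Lemma \ref{Lem:30}-type) removes it. So WLOG $\Omega_k'\Delta\Omega_k \Subset B^g_{C\rho_k}(p)$.

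The second step is the rescaling. Pass to the metrics $g_{\rho_k}=\rho_k^{-2}g$ and the normal coordinate charts $\phi^p_{g_{\rho_k}}$; by \eqref{eq:29} these metrics converge in $C^k_{loc}$ to the Euclidean $\delta$ on $\R^3$. Set $E_k := \phi^p_{g_{\rho_k}}(\Omega_k)$ and $F_k := \phi^p_{g_{\rho_k}}(\Omega_k')$. Since $\|w_k\|_{C^1}\to 0$ and $\rho_k\to 0$, the rescaled boundaries $\partial E_k$ converge (in $L^1_{loc}$, and by the expansion \eqref{eq:Expgij} of the induced metric, also as $C^1$ graphs) to the Euclidean unit sphere $\partial B^\delta_1(0)$; in particular $\chi_{E_k}\to\chi_{B_1}$ in $L^1_{loc}$. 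The sets $F_k$ have uniformly bounded perimeter (from $\rP_g(\Omega_k')\le \rP_g(\partial\Omega_k)=O(\rho_k^2)$ and \eqref{eq;rescalPer}) and are contained in a fixed ball, so up to a subsequence $\chi_{F_k}\to\chi_{F_\infty}$ in $L^1$, with $B_1\subset F_\infty$ and, by lower semicontinuity combined with Lemmas \ref{Lem:14}--\ref{Lem:29} converting the $g_{\rho_k}$-perimeter to the Euclidean one up to errors $O(\rho_k^2)$, $\rP_\delta(F_\infty)\le \rP_\delta(B_1)=4\pi$. But the Euclidean isoperimetric inequality, with equality only for balls, forces $F_\infty$ to be a ball of radius $\le 1$ containing $B_1$, hence $F_\infty=B_1$.

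The third step upgrades this to a contradiction. The competitors $F_k$ are not arbitrary: strict minimality among sets containing $\Omega_k$ means $F_k$ is, outside $\partial E_k$, a local minimizer of the $g_{\rho_k}$-perimeter (on the complement of $\bar\Omega_k$ it has zero first variation; globally it is a quasi-minimizer with gauge $O(r^3)$ coming from the metric distortion — this is exactly the regularity setting of Theorem \ref{thm:5} with $\alpha$ slightly below $1/2$, equivalently the excess bound \eqref{eq:ExcEstHp}). Hence Tamanini's theorem applies uniformly along the sequence: $\partial F_k$ is $C^{1,\alpha}$ with uniform estimates away from $\partial E_k$, and by the convergence statement in Theorem \ref{thm:5}, $\partial F_k\to\partial F_\infty=\partial B_1$ in $C^1$, with matching normals. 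Now compare perimeters quantitatively. Write $\rP_g(\Omega_k') - \rP_g(\Omega_k) = \rho_k^2\big(\rP_{g_{\rho_k}}(F_k)-\rP_{g_{\rho_k}}(E_k)\big)$. Using Lemma \ref{Lem:14} and the second-order Taylor expansion of the area element (the $\mathring g_{ij}$ expansion \eqref{eq:Expgij}), one has $\rP_{g_{\rho_k}}(E_k) = \rP_\delta(B_1) + \rho_k^2\, a(w_k) + o(\rho_k^2)$ and $\rP_{g_{\rho_k}}(F_k) = \rP_\delta(F_k) + \rho_k^2\, a_k + o(\rho_k^2)$ where the curvature-correction coefficients $a(w_k), a_k$ are \emph{uniformly bounded} (this is where the $C^1$-convergence of $\partial F_k$ to $\partial B_1$ is essential: it pins the region where the $O(\rho^2)$ curvature correction is integrated). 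Since $\rP_\delta(F_k)\ge \rP_\delta(B_1)$ with both sides converging, $\rP_\delta(F_k) - \rP_\delta(B_1) = o(1)$; one needs the sharper fact, from the quantitative isoperimetric inequality or directly from the $C^1$-convergence of the boundaries, that in fact $\rP_\delta(F_k)-\rP_\delta(B_1) = o(\rho_k^2)$ is \emph{not} automatic — so instead one argues: if $\rP_\delta(F_k) > \rP_\delta(B_1)$ then for large $k$, $\rP_{g_{\rho_k}}(F_k) > \rP_{g_{\rho_k}}(E_k)$ provided the $O(\rho_k^2)$ corrections are dominated, which they are once we know $\rP_\delta(F_k)-\rP_\delta(B_1) \gg \rho_k^2$; and if $\rP_\delta(F_k)-\rP_\delta(B_1) = O(\rho_k^2)$, then $F_k$ is $\rho_k^2$-close in perimeter to the ball, hence (quantitative isoperimetry) $\rho_k^2$-close in $L^1$, so $\partial F_k$ and $\partial B_1$ differ by a normal graph of size $O(\rho_k)$, and then a direct computation of the second variation of the $g_{\rho_k}$-perimeter at the round sphere — whose leading curvature term is governed by $\Ric$ and is nonnegative under $\Sc\ge 0$ by the same mechanism as in \eqref{eq:mHSprhow} — shows $\rP_{g_{\rho_k}}(F_k)\ge \rP_{g_{\rho_k}}(E_k)$, contradicting strict inequality.

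The main obstacle, as the last paragraph reveals, is not the blow-up itself but the \emph{quantitative} step at the end: ruling out competitors whose Euclidean perimeter exceeds that of $B_1$ by an amount comparable to $\rho_k^2$, i.e. competitors that are ``first-order Euclidean-optimal but second-order better in the curved metric.'' Controlling this requires either a quantitative isoperimetric inequality plus a sharp second-variation computation of the $g_\rho$-perimeter functional at the round geodesic sphere (and here the sign of the scalar curvature enters decisively, just as it does in the Hawking mass expansion), or a more careful direct comparison using that the perturbed geodesic sphere $S_{p,\rho}(w)$ with $w$ small is already within $O(\rho^2)$ in perimeter of the \emph{area-minimizing} competitor in its volume class. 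I would organize the write-up so that the soft part (localization, blow-up, Tamanini regularity, identification of the limit as $B_1$) is cleanly separated from this hard quantitative core.
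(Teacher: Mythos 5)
Your soft steps (localisation, blow-up to the Euclidean unit ball via compactness and rigidity of the isoperimetric inequality, and Tamanini's regularity theory to upgrade the convergence) follow essentially the same route as the paper, but one structural point needs attention: Tamanini's Theorem \ref{thm:5} requires the uniform excess estimate \eqref{eq:ExcEstHp}, and an arbitrary competitor $\Omega_k'$ with $\rP_g(\Omega_k')<\rP_g(S_{p,\rho_k}(w_k))$ does not satisfy it — your claim that ``strict minimality among sets containing $\Omega_k$'' makes $F_k$ a local minimiser is unjustified for a bare competitor. The paper avoids this by working from the start with the \emph{minimising hull} $\Omega_{p,\rho,w}$ of $B_{p,\rho}(w)$ (the perimeter minimiser among sets containing $B_{p,\rho}(w)$, which exists by compactness); for the hull, local minimality away from $S_{p,\rho}(w)$ and the cubic-gauge quasi-minimality near it (Lemmas \ref{Lem:7} and \ref{Lem:30}) are genuine, so Theorem \ref{thm:5} applies uniformly. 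This is repairable in your scheme simply by replacing $\Omega_k'$ with the hull.

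The genuine gap is your final step. The ``hard quantitative core'' you isolate — competitors that are Euclidean-optimal to first order but allegedly better by $\cO(\rho_k^2)$ in the curved metric — is never resolved in your proposal, and the routes you suggest (quantitative isoperimetry plus a second-variation computation at the round sphere, with the sign of the scalar curvature entering ``decisively'') are misdirected: the sign of $\Sc$ plays no role in the outer-minimising property, and no second-order analysis is needed. The paper's resolution is a first-variation argument: after the regularity step, $\partial\Omega_{p,\rho,w}$ is a small $C^1$ normal graph $u_{\rho,w}$ over $S_{p,\rho}(w)$, and the containment $B_{p,\rho}(w)\subset\Omega_{p,\rho,w}$ forces the one-sided condition $u_{\rho,w}\geq 0$. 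Since the mean curvature of $S_{p,\rho}(w)$ in the rescaled metric is $H^{g_\rho}=2+\cO(\rho^2)>0$, the expansion $\rA_{g_\rho}(u)=\rA_{g_\rho}(0)+\int_{S_{p,\rho}(w)} H^{g_\rho}u\, dV_{g_\rho}+o(\norm{u}_{C^1})$ shows the hull's area can only exceed that of $S_{p,\rho}(w)$, with equality iff $u\equiv 0$; minimality of the hull then forces $u\equiv 0$, i.e. $S_{p,\rho}(w)$ itself is outer-minimising. It is precisely the combination of the sign constraint $u\geq 0$ (from containment) with mean convexity that eliminates the scenario you were worried about — this is the idea missing from your write-up.
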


Throughout this section, we will denote by $B_{p,\rho}(w)$ the perturbed geodesic ball enclosed by $S_{p,\rho}(w)$.

\begin{proof}
%

Using the standard $L^{1}_{loc}$-compactness and lower-semicontinuity of the perimeter, observe that, for any $\rho>0$ and $w\in C^{1}({\mathbb S}^{2})$, there exists a set of finite perimeter $\Omega_{p,\rho,w}\subset M$ minimising the perimeter among all sets of finite perimeter and finite volume containing  $B_{p,\rho}(w)$. Such $\Omega_{p,\rho,w}\subset M$ is called \emph{minimising hull} of  $B_{p,\rho}(w)$. Clearly
\begin{equation}\label{eq:POmegaB}
\rP_{g} (\Omega_{p,\rho,w}) \leq \rP_{g}(B_{p,\rho}(w)), \quad 0<|B_{p,\rho}(w)|_{g}\leq |\Omega_{p,\rho,w}|_{g}.
\end{equation}
We will show that, up to choosing the suitable a.e. representative, $\Omega_{p,\rho,w}=B_{p,\rho}(w)$ for any $\rho, \|w\|_{C^{1}}$ small enough. Note that, under such a smallness condition, it holds that $B_{p,\rho}(w)\Subset M\setminus \partial M$; thus, in the blow up arguments of steps 2-4, it is not restrictive to assume that $\partial M = \emptyset$.
\\

\textbf{Step 1}. We claim that there exists $C=C_{p}\geq1$ such that
\begin{equation}\label{eq:volOmegaBdd}
0<C^{-1} \leq \liminf_{\rho\downarrow 0} \frac{|\Omega_{p, \rho, w}|_{g}}{\rho^{3}} \leq  \limsup_{\rho\downarrow 0}  \frac{|\Omega_{p, \rho, w}|_{g}}{\rho^{3}} \leq C<\infty,  \quad \forall  \|w\|_{C^{1}({\mathbb S}^{2})}\leq 1.
\end{equation}
The lower bound is a direct consequence of the second inequality in \eqref{eq:POmegaB}, thus we are left to show the upper bound.
\\ From \cite[Theorem C.2]{CESYCPAM} (see also \cite[Theorem 3]{JLCrelle} after \cite{Huisk2006}),  we know that $\lim_{v\to \infty} \mathcal I_{(M,g)}(v)=+\infty$. Thus, \eqref{eq:POmegaB} implies that there exists $V_{0}>0$ such that $|\Omega_{p,\rho,w}|_{g}\leq V_{0}$ for all $\rho\in (0,1], \|w\|_{C^{1}}\leq 1$.
\\Hence, the upper bound follows from the isoperimetric inequality \eqref{eq:EII} and the perimeter bound in \eqref{eq:POmegaB}:
$$
\limsup_{\rho \downarrow 0} \frac{|\Omega_{p, \rho, w}|_{g}}{\rho^{3}} \overset{\eqref{eq:EII} }{\leq} C \limsup_{\rho \downarrow 0}  \frac{\rP_{g}(\Omega_{p, \rho, w})^{3/2}}{\rho^{3}} \overset{\eqref{eq:POmegaB}}{\leq} C  \limsup_{\rho \downarrow 0} \frac{\rP_{g}(B_{p,\rho}(w))^{3/2}}{\rho^{3}}<\infty.  
$$

\textbf{Step 2}. Blow up and $L^{1}_{loc}$-convergence to a Euclidean ball.
\\
In this step we blow up the Riemannian manifold $(M,g)$ at $p$ with scaling rate  $\rho^{-1}$ as $\rho\downarrow 0$, and we show that  the ``rescaled $ \Omega_{p,\rho,w}$''  converge as finite perimeter sets to the Euclidean ball of unit radius  $B^{\delta}_{1}(0)\subset \R^{3}$.
\\To this aim, consider the rescaled Riemannian metric $g_{\rho}:= \rho^{-2} \,  g$  and observe that the rescaled pointed manifolds $(M,g_{\rho}, p)$ converge to $(\R^{3}, \delta, 0)$ in the smooth pointed Cheeger-Gromov sense as $\rho\downarrow 0$; i.e., calling $B_{\rho^{-1}}^{\delta}(0)\subset \R^{3}$ the Euclidean ball of radius $\rho^{-1}$ centred at $0$,   for every $\rho\in(0,1]$ there exists a smooth map $\psi_{\rho}:B_{\rho^{-1}}^{\delta}(0)\to M$ which is diffeomorphic onto its image such that $\psi_{\rho}(0)=p$ and $\psi_{\rho}^{*} \, g_{\rho}\to \delta$ smoothy locally on $\R^{3}$ as $\rho\downarrow 0$.
\\ Combining the smooth pointed Cheeger-Gromov convergence with the compactness/lower semicontinuity of the perimeter, it follows that for every sequence $\rho_{n}\downarrow 0$ there exists a subsequence (not relabeled) and a set of finite perimeter $\bar{\Omega}\subset \R^{3}$ such that $\psi_{\rho_{n}}^{-1}(\Omega_{p,\rho_{n},w_{n}})\subset B_{\rho_{n}^{-1}}^{\delta}(0)\subset \R^{3}$ converges in $L^{1}_{loc}(\R^{3})$ to $\bar{\Omega}$ and
\begin{equation} \label{eq:POmegaleq}
\begin{split}
|\bar{\Omega}|_{\delta} \leq  \liminf_{n\to \infty}   |\Omega_{p,\rho_{n}, w_{n}}|_{g_{\rho_{n}}} = \liminf_{n\to \infty}  \rho_{n}^{-3} |\Omega_{p,\rho_{n}, w_{n}}|_{g}\overset{\eqref{eq:volOmegaBdd}}<\infty  \\
\rP_{\delta}(\bar{\Omega}) \leq \liminf_{n\to \infty}  \rP_{g_{\rho_{n}}} (\Omega_{p,\rho_{n}, w_{n}})= \liminf_{n\to \infty}  \rho_{n}^{-2} \rP_{g} (\Omega_{p,\rho_{n}, w_{n}}) 
\end{split}
\qquad \text{for all } \rho_{n}, \, \|w_{n}\|_{C^{1}({\mathbb S}^{2})}\to 0.
\end{equation}
Since by construction $B_{p,\rho_{n}}(w_{n}) \subset \Omega_{p,\rho_{n},w_{n}}$ and $\psi_{\rho_{n}}^{-1} (B_{p,\rho_{n}}(w_{n}))\to B^{\delta}_{1}(0)$ smoothly as $n\to \infty$, it also holds 
\begin{equation}\label{eq:PB1delta=}
B_{1}^{\delta}(0)\subset \bar{\Omega}, \qquad \lim_{n\to \infty} \rho_{n}^{-2} \rP_{g} (B_{p,\rho_{n}}(w_{n}))=  \lim_{n\to \infty}  \rP_{g_{\rho_{n}}} (B_{p,\rho_{n}}(w_{n}))= \rP_{\delta} (B_{1}^{\delta}(0)), \quad \forall \rho_{n}, \, \|w_{n}\|_{C^{1}({\mathbb S}^{2})}\to 0.
\end{equation}
Recalling that $\rP_{g}(\Omega_{p,\rho,w})\leq \rP_{g}(B_{p,\rho_{n}}(w_{n}))$, the combination of \eqref{eq:POmegaleq} and \eqref{eq:PB1delta=} yields:
\begin{equation}\nonumber
B_{1}^{\delta}(0)\subset \bar{\Omega}, \; |\bar{\Omega}|_{\delta}<\infty,\; \rP_{\delta}(\bar{\Omega})\leq  \rP_{\delta}(B_{1}^{\delta}(0)).
\end{equation}
The rigidity in the Euclidean isoperimetric inequality yields that $|\bar{\Omega} \Delta B_{1}^{\delta}(0)|_{\delta}=0$. By the arbitrariness of the sequences $(\rho_{n})$ and  $(w_{n})$, we conclude that
\begin{equation}\label{eq:OmegatoB1L1}
\psi_{\rho}^{-1}(\Omega_{p,\rho,w}) \to B_{1}^{\delta}(0) \text{ in } L^{1}_{loc}(\R^{3}) \text{ as } \rho\to 0,\, \|w\|_{C^{1}({\mathbb S}^{2})}\to 0. 
\end{equation}

\textbf{Step 3}. Improving the convergence via regularity theory.
\\Let us first fix some notation. Given a point $p\in M$, denote with $\phi^{p}_{g}:B^{g}_{{\rm Inj}_{p}}(p)\to \R^{3}$ a normal coordinate chart centred at $p$ with respect to the Riemannian metric $g$. Again, we consider the rescaled metrics $g_{\rho}:=\rho^{-2} g$, for $\rho\in (0,1]$. Notice that we can (and will)  choose $\psi_{\rho}$ from step 2 to be  $\psi_{\rho}=(\phi^{p}_{g_{\rho}})^{-1}$.
\\

We first claim that there exist constants $C=C(p)>0$ and $\rho_{0}=\rho_{0}(p)$ such that%
	\begin{equation}\label{eq:32}
		\left| \rP_{\delta}(\phi^p_{g_{\rho}}(\Omega_{p,\rho,w}),B^{\delta}_r(\phi^p_{g_{\rho}}(q))) - \rP_{g_{\rho}}(\Omega_{p,\rho,w},B^{g_{\rho}}_r(q)) \right| \leq C r^{4}, \quad \text{for all $r\in (0,1]$, $\rho\in (0, \rho_{0}]$, $q\in B^{g_{\rho}}_{10}(p)$, $\|w\|_{C^{1}}\leq 1$}.
	\end{equation}
First, for any part of $\partial \Omega_{p,\rho,w}$ coinciding with the submanifold $S_{p,\rho}(w)=\partial B_{p,\rho}(w)$ we can use that
\begin{equation}\label{eq:AreaSprho}
\rP_{g_{\rho}}(B_{p,\rho}(w),  B^{g_{\rho}}_r(q)) \leq C\,  r^2,\quad  \text{for all $r\in (0,1]$, $\rho\in (0, \rho_{0}(p)]$, $q\in B^{g_{\rho}}_{10}(p)$, $\|w\|_{C^{1}}\leq 1$}.
\end{equation}
	Second,  away from the intersection points with $S_{p,\rho}(w)$, by construction $\Omega_{p,\rho,w}$ is locally perimeter minimising with respect to the metric $g_{\rho}$. Thus we can apply Lemma \ref{Lem:7} with $E= \Omega_{p,\rho,w}$  to obtain:
\begin{equation}\label{eq:157}
	\rP_{g_{\rho}}(\Omega_{p,\rho,w},B^{g_{\rho}}_r(q)) \leq Cr^2, \quad \text{for all $r\in (0,1]$, $\rho\in (0, \rho_{0}]$,  $\|w\|_{C^{1}}\leq 1$, $q\in B^{g_{\rho}}_{10}(p)$ with $B^{g_{\rho}}_r(q)\cap S_{p,\rho}(w)=\emptyset$}.
\end{equation}
The constant coming from Lemma \ref{Lem:7} is independent of $\rho$ because the $\Omega_{p,\rho,w}$ have uniformly bounded perimeter and $g_{\rho}$ is smoothy converging to the Euclidean metric $\delta$  on $B_{11}^{\delta}(0)$. 
\\Combining \eqref{eq:AreaSprho} and \eqref{eq:157} with Lemma  \ref{Lem:14} gives the claim \eqref{eq:32}.
\\

We next claim that the sequence $\phi^p_{g_{\rho}}(\Omega_{p,\rho,w})$ satisfies the conditions of Theorem \ref{thm:5}.
\\Let $U = B^{\delta}_{10}(0)$. Notice that for $\rho_{0}=\rho_{0}(p)>0$ small enough we have that ${\rm Inj}_{p}^{g_{\rho}}>11$, so that  $U = \phi^p_{g_{\rho}}(B^{g_{\rho}}_{10}(p))$ for all $\rho\in(0, \rho_{0}]$.

	Let $F\subset M$ be such that $F\Delta \Omega_{p,\rho,w} \Subset B^{g_{\rho}}_r(q)$, and define $F' := F\cup B_{p,\rho}(w)$. Then, by the minimising assumption on $\Omega_{p,\rho,w}$, we have:
	\begin{equation*}
		\rP_{g_{\rho}}(\Omega_{p,\rho,w}) \leq \rP_{g_{\rho}}(F').
	\end{equation*}
From standard properties of the perimeter, we have that
	\begin{equation*}
		\rP_{g_{\rho}}(F\cup B_{p,\rho}(w)) + \rP_{g_{\rho}}(F\cap B_{p,\rho}(w)) \leq \rP_{g_{\rho}}(F) + \rP_{g_{\rho}}(B_{p,\rho}(w)).
	\end{equation*}
Applying Lemma \ref{Lem:30} with $U = B_{p,\rho}(w)$ gives that there exists $\bar{r}=\bar{r}(p)\in (0,1], \, C=C(p)>0$ such that
	\begin{equation*}
		\rP_{g_{\rho}}(B_{p,\rho}(w)) \leq \rP_{g_{\rho}}(G) + Cr^3, \quad \text{for all $G\Delta B_{p,\rho}(w)\Subset B^{g_{\rho}}_r(q)$, $q\in B^{g_{\rho}}_{10}(p)$, $r\in (0,\bar{r}]$. }
	\end{equation*}
 Letting $G = F\cap B_{p,\rho}(w)$ and combining the three previous inequalities gives:
	\begin{equation*}
		\rP_{g_{\rho}}(\Omega_{p,\rho,w}) \leq \rP_{g_{\rho}}(F') \leq \rP_{g_{\rho}}(F) + Cr^3, \quad \text{for all $F\Delta \Omega_{p,\rho,w} \Subset B^{g_{\rho}}_r(q)$,  $q\in B^{g_{\rho}}_{10}(p)$, $r\in (0,\bar{r}]$.}
	\end{equation*}
 Since the sets $\Omega_{p,\rho,w}$ and $F$ coincide outside of $B^{g_{\rho}}_r(q)$, the last estimate is equivalent to
	\begin{equation}\label{eq:146}
		\rP_{g_{\rho}}(\Omega_{p,\rho,w},B^{g_{\rho}}_r(q)) \leq \rP_{g_{\rho}}(F,B^{g_{\rho}}_r(q)) + Cr^3, \quad \text{for all $F\Delta \Omega_{p,\rho,w} \Subset B^{g_{\rho}}_r(q)$,  $q\in B^{g_{\rho}}_{10}(p)$, $r\in (0,\bar{r}]$.}
	\end{equation}
We can finally estimate the excess:
	\begin{align}
		\Psi(\phi^p_{g_{\rho}}(\Omega_{p,\rho,w}),B^{\delta}_r(\phi^p_{g_{\rho}}(q)))  ) &:= \rP_{\delta}(\phi^p_{g_{\rho}}(\Omega_{p,\rho,w}),B^{\delta}_r(\phi^p_{g_{\rho}}(q))) \nonumber \\
		&\qquad - \inf \left\{\rP_{\delta}\big(\phi^p_{g_{\rho}}(F),B^{\delta}_r(\phi^p_{g_{\rho}}(q))\big) | \phi^p_{g_{\rho}}(F)\Delta \phi^p_{g_{\rho}}(\Omega_{p,\rho,w}) \Subset  B^{\delta}_r(\phi^p_{g_{\rho}}(q)) \right\} \nonumber\\
		&\overset{\eqref{eq:32}, \eqref{eq:30}}{=} \rP_{g_{\rho}}(\Omega_{p,\rho,w},B^{g_{\rho}}_r(q)) + \cO(r^4) \nonumber \\
		&\qquad - \inf \left\{(1+ \cO(r^2))\rP_{g_{\rho}}(F,B^{g_{\rho}}_r(q)) + \cO(r^4)|\phi^p_{g_{\rho}}(F)\Delta \phi^p_{g_{\rho}}(\Omega_{p,\rho,w}) \Subset B^{\delta}_r(\phi^p_{g_{\rho}}(q)) \right\} \nonumber \\
		&\overset{\eqref{eq:146} }\leq \rP_{g_{\rho}}(\Omega_{p,\rho,w},B^{g_{\rho}}_r(q)) + \cO(r^4) - [(1+ \cO(r^2))(\rP_{g_{\rho}}(\Omega_{p,\rho,w},B^{g_{\rho}}_r(q)) - Cr^3) + \cO(r^4)] \nonumber \\
		&\overset{\eqref{eq:157}}\leq C r^{3} , \quad \text{for all   $r\in (0,\bar{r}]$, $\|w\|_{C^{1}}\leq 1$, $q\in B^{\delta}_{10}(0)$,}
	\end{align}
	for some constant $C=C(p)>0$.
	Thus, the family $\phi^p_{g_{\rho}}(\Omega_{p,\rho,w})$ satisfies the assumptions of  Theorem \ref{thm:5}  with $\alpha = \frac{1}{2}$. 
	We infer that $\partial \Omega_{p,\rho,w}$ are $C^{1,1/2}$ surfaces, and  the outward pointing unit normals of $\partial \Omega_{p,\rho,w}$ converge to the outward pointing unit normal of $\partial B^{\delta}_1(0)$ as $\rho, \|w\|_{C^{1}}\to 0$. This implies that there exists $r_{0}=r_{0}(p)>0$ and $\rho_{0}=\rho_{0}(p)>0$ such that, for all $\|w\|_{C^{1}}\leq r_{0}$ and  $\rho\in (0,\rho_{0}]$, the surfaces $\partial \Omega_{p,\rho,w}$ are $C^{1, 1/2}$ graphs over $\partial B^{\delta}_1(0)$. Moreover, such graphs converge to $0$ in the $C^{1,1/2}$ topology as $\rho\to 0, \|w\|_{C^{1}}\to 0$.
\\

\textbf{Step 4}. Conclusion by a first variation argument.
\\First of all, notice that for $\rho>0$ small enough (depending only on $p$), the surface $\phi^p_{g_{\rho}}(S_{p,\rho}(w))$ is a graph over $\partial B^{\delta}_1(0) = {\mathbb S}^2$.
Combining this fact with step 3, we get that for $\rho>0$ and $\|w\|_{C^{1}}$ small enough  (depending only on $p$) the surface $\partial \phi^p_{g_{\rho}}(\Omega_{p,\rho,w})$ is a graph over $\phi^p_{g_{\rho}}(S_{p,\rho}(w))$. Thus $\partial\Omega_{p,\rho,w}$ is parameterized by:
	\begin{equation*}
		\partial\Omega_{p,\rho,w} = \{\exp^{g_{\rho}}_q{(u_{\rho,w}(q)\hat{N}(q))} : q\in S_{p,\rho}(w)\}
	\end{equation*}
	for some function $u_{\rho,w}\in C^1(S_{p,\rho}(w))$. Notice that $u_{\rho,w}\geq 0$, since  by assumption  $S_{p,\rho}(w) \subset \Omega_{p,\rho,w}$.
	\\Using that both $\partial \phi^p_{g_{\rho}}(\Omega_{p,\rho,w})$ and $\phi^p_{g_{\rho}}(S_{p,\rho}(w))$ converge to $B^{\delta}_1(0)$ as $\rho\to 0, \|w\|_{C^{1}}\to 0$, we also have 
	\begin{equation}\label{eq:urhowto0}
	\norm{u_{\rho,w}}_{C^1} \rightarrow 0 \text{ as $\rho \rightarrow 0$, $\| w\|_{C^{1}}\to 0$.}
	\end{equation}
	For fixed  $\rho>0$, consider the Banach space $C^1(S_{p,\rho}(w))$ of graph functions over $S_{p,\rho}(w)$. The area functional in $g_{\rho}$-metric, $\rA_{g_{\rho}} : C^1(S_{p,\rho}(w)) \rightarrow \R$, is Fr\'echet differentiable at 0, 	with derivative $d(\rA_{g_{\rho}})_0 \in {\mathcal L}(C^1(S_{p,\rho}(w)),\R)$ such that:
	\begin{equation*}
		\rA_{g_{\rho}}(0 + h) = \rA_{g_{\rho}}(0) + d(\rA_{g_{\rho}})_0(h) + o(\norm{h}_{C^1}), \quad \text{for all $h\in C^1(S_{p,\rho}(w))$}.
	\end{equation*}
	In particular, setting $h = u_{\rho,w}$ gives:
	\begin{equation*}
		\rA_{g_{\rho}}(0 + u_{\rho,w}) = \rA_{g_{\rho}}(0) + d(\rA_{g_{\rho}})_0(u_{\rho,w}) + o(\norm{u_{\rho,w}}_{C^1}) .
	\end{equation*}
	Comparing this to the first variation (Gateaux derivative) of $\rA_{g_{\rho}}$, we see that: 
	\begin{equation*}
		d(\rA_{g_{\rho}})_0(u_{\rho,w}) = \int_{S_{p,\rho}(w)}H^{g_{\rho}}_{S_{p,\rho}(w)}u_{\rho,w}\ dV_{g_{\rho}}.
	\end{equation*}
	Therefore:
	\begin{equation*}
		\rA_{g_{\rho}}(0 + u_{\rho,w}) = \rA_{g_{\rho}}(0) + \int_{S_{p,\rho}(w)}H^{g_{\rho}}_{S_{p,\rho}(w)}u_{\rho,w}\ dV_{g_{\rho}} + o(\norm{u_{\rho,w}}_{C^1}).
	\end{equation*}
	Notice that the left hand side coincides with $\rA_{g_{\rho}}(\partial\Omega_{p,\rho,w})=\rP_{{g_{\rho}}}(\Omega_{p,\rho,w})$. Moreover,  for small $\rho$, we have that 
	$$H^{g_{\rho}}_{S_{p,\rho}(w)} = 2 + \cO(\rho^2) > 0,$$
	indeed, by \eqref{eq:ExpH} we know $H^g_{S_{p,\rho}(w)} = 2\rho^{-1} + \cO(\rho)$, which gets multiplied by a factor $\rho$ due to the scaling of the metric.  Thus, for small $\rho$ we get: 
	\begin{equation*}
		\rA_{g_{\rho}}(\partial \Omega_{p,\rho,w}) \geq \rA_{g_{\rho}}(0) = \rA_{g_{\rho}}(S_{p,\rho}(w)),
	\end{equation*}
	with equality if and only if $u_{\rho,w}\equiv 0$.
	\\Since by construction $\partial \Omega_{p,\rho,w}$ is the minimising hull of   $B_{p,\rho}(w)$, we have $\rA_{g_{\rho}}(\partial \Omega_{p,\rho,w})\leq  \rA_{g_{\rho}}(S_{p,\rho}(w))$. Thus $u_{\rho,w}\equiv 0$, that is $\partial \Omega_{p,\rho,w} = S_{p,\rho}(w)$. Hence $S_{p,\rho}(w)$ is outer-minimising for  $\rho>0$ and $\|w\|_{C^{1}}$ sufficiently small (smallness depending only on $p$).

\end{proof}

\subsection{Proof of the Bartnik mass Theorem \ref{thm:7-8} }

Firstly, recall that  \eqref{mBgeqmH} gives: 
\begin{equation*}
    m_B(\Omega) \geq m_H(\partial\Omega).
\end{equation*}
For every perturbed geodesic sphere $S_{p,\rho}(w) \subset \Omega$, with $\rho>0$ and $\|w\|_{C^{1}}$ sufficiently small (only depending on $p$), we have:
\begin{equation}\label{eq:mBOmegamHSp}
    m_H(S_{p,\rho}(w)) \leq m_B(B_{p,\rho}(w)) \leq m_B(\Omega) ,
\end{equation}
where the first inequality follows from \eqref{mBgeqmH} applied to $S_{p,\rho}(w)$ and the second follows by the monotonicity property \eqref{eq:monmB}, which applies thanks to the outer-minimising property of $S_{p,\rho}(w)$ proved in Theorem \ref{Prop:OuterMin}. 
For every $p\in \Omega\setminus \partial M$ and for $\rho>0$ sufficiently small (depending only on $p$), let $w_{p,\rho}\in C^{4,\alpha}(\mathbb S^{2})^{\perp}\subset C^{1}(\mathbb S^{2})$ be given by Lemma \ref{lem:estw}, i.e. $w_{p,\rho}$ is the optimal perturbation extremizing (in $C^{4,\alpha}(\mathbb S^{2})^{\perp}$) the Hawking mass under area constraint. Combining the expansion \eqref{eq:mHSprhow} and the inequality \eqref{eq:mBOmegamHSp}, specialised to $w=w_{p,\rho}$, gives the claimed lower bound on the Bartnik mass:
$$
m_B(\Omega) \geq  \frac{1}{12} \Sc_p\rho^{3} +  \left(\frac{1}{120} \Delta \Sc(p) + \frac{1}{90}\norm{S_p}^2 - \frac{1}{144}\Sc_p^2  \right)  \rho^5 + \cO(\rho^6).
$$
If $m_B(\Omega)=0$, from \eqref{eq:mBOmegamHSp} we infer that $m_H(S_{p,\rho}(w))$ is non-positive for every perturbed geodesic sphere of sufficiently small radius. We can then apply Theorem \ref{prop:Riem0} to $\Omega$ to infer that $\Omega \setminus \partial M$ is locally isometric to Euclidean $\R^3$.

\hfill$\Box$

\section{Other rigidity results}\label{Sec:OtherRigRes}

In this section, we collect other rigidity results involving the Hawking mass in various different settings. As the reader will appreciate, the proofs will be quite straightforward thanks to the work done in the previous sections (in particular we will make repeated use of the expansion of the Hawking mass obtained in Proposition \ref{prop:expHawk}).

\subsection{Rigidity for the generalised Hawking mass (i.e. for non-zero cosmological constant)}

The standard Hawking mass \eqref{eq:defHaw} is relevant when the ambient space is a Riemannian $3$-manifold  with non-negative scalar curvature. Such metrics are natural when the cosmological constant is zero. Instead,  when the cosmological constant $\Lambda$ is negative (resp. positive), it is is more natural to consider metrics with scalar curvature bounded below by a negative (resp. positive) constant. Indeed, the Dominant Energy Condition coupled with the  Einstein Constraint Equations imply that the scalar curvature of a totally geodesic space-like hypersurface (i.e. the so-called time-symmetric case) is bounded below by $2\Lambda$.
\\ When $\Lambda$ is negative (resp. positive) it is standard to choose the normalization $\Lambda=-3$ (resp. $\Lambda=3$) and it is natural to compare the geometry of a totally geodesic space-like hypersurface with a space-form of constant sectional curvature $K=-1$ (resp. $K=1$). 
\\When $\Lambda\in \{-3,0,3\}$, it is also natural to modify the Hawking mass as follows (see for instance \cite{neves}).
\begin{definition}\label{def:GenHaw}
Let  $K \in \{-1,0,1\}$ and let $(M^3,g)$ be a $3$-dimensional Riemannian manifold with $\Sc\geq 6K$. The generalized Hawking mass  of an immersed sphere  $\Sigma$ in $M$ is
\begin{equation}\label{eq:defGenHaw}
    m_{H}(\Sigma) := \sqrt{\frac{|\Sigma |}{(16\pi)^3}}\left(16\pi - \int_{\Sigma} \left( H^2 + 4K \right) \, dV_{\Sigma}\right).
\end{equation}
\end{definition}
Arguing along the lines as the proof Theorem \ref{thm:1} we obtain the following rigidity result involving the generalised Hawking mass.
%

\begin{theorem}\label{thm:9}
Let $(M^3,g)$ be a connected, complete Riemannian manifold without boundary,  with scalar curvature $\Sc\geq 6K$ where $K \in \{-1,0,1\}$. If for every $p \in M$ there is a neighbourhood $U$ of $p$ such that the generalised Hawking mass of every embedded sphere contained in $U$ is non-positive, then $(M^3,g)$ is isometric to a space form of constant sectional curvature $K$.
\end{theorem}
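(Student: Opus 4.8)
The plan is to mirror the structure of the proof of Theorem \ref{thm:1}, reducing the generalised statement to the already-established machinery by a conformal-type change, or more directly by repeating the Taylor-expansion argument with the extra $4K$ term included. First I would record an expansion of the generalised Hawking mass $m_H(S_{p,\rho}(w_{p,\rho}))$ on optimally perturbed geodesic spheres analogous to \eqref{eq:mHSprhow}. The only new ingredient compared to Proposition \ref{prop:expHawk} is the term $\int_{\Sigma} 4K \, dV_{\Sigma} = 4K\, |S_{p,\rho}(w_{p,\rho})|$; using the area expansion \eqref{eq:AreaSprhow}, namely $|S_{p,\rho}(w)|_g = |{\mathbb S}^2|_{\mathring\delta}\rho^2(1 - \tfrac{1}{18}\Sc_p\rho^2 + \cO(\rho^4))$, one gets that this term contributes $16\pi K \rho^2 + \cO(\rho^4)$ inside the bracket $16\pi - \int_\Sigma(H^2+4K)$. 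Combining this with the expansion \eqref{eq:77} of $W(S_{p,\rho}(w_{p,\rho}))$ and being careful to carry one more order, I expect to obtain
\begin{equation*}
m_H(S_{p,\rho}(w_{p,\rho})) = \frac{1}{12}(\Sc_p - 6K)\rho^3 + \Big(\frac{1}{120}\Delta\Sc(p) + \frac{1}{90}\|S_p\|^2 - \frac{1}{144}(\Sc_p-6K)^2 + c\, K(\Sc_p - 6K)\Big)\rho^5 + \cO_p(\rho^6),
\end{equation*}
for a universal constant $c$ whose precise value is irrelevant for what follows.

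Next I would run the quasi-local rigidity argument of Theorem \ref{prop:Riem0} verbatim on this expansion. The hypothesis $\Sc \geq 6K$ together with $m_H(\Sigma)\leq 0$ for all $\Sigma\subset U$ (in particular for the optimally perturbed spheres) forces, at order $\rho^{-2}$, the identity $\Sc \equiv 6K$ on $U$. Feeding $\Sc\equiv 6K$ back in, the $\rho^5$-coefficient collapses to $\frac{1}{120}\Delta\Sc(p) + \frac{1}{90}\|S_p\|^2 = \frac{1}{90}\|S_p\|^2$ (since $\Sc$ is constant), which is non-negative; the order-$\rho^0$ inequality then forces $S\equiv 0$, i.e. $\Ric \equiv 2K g$ on $U$. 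In dimension three this determines the full curvature tensor via $\Rm = \Ric\KN g - \frac{1}{4}\Sc\, g\KN g$, giving that $(U,g)$ has constant sectional curvature $K$. Since every point has such a neighbourhood, $(M,g)$ has constant sectional curvature $K$ everywhere.

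Finally I would invoke the classification of complete, connected space forms (Killing–Hopf, as in Proposition \ref{prop:ALSCR3} and \cite{Wolf}): a complete connected Riemannian manifold of constant sectional curvature $K$ is isometric to a quotient of the model space $M_K$ (which is ${\mathbb S}^3$, $\R^3$, or ${\mathbb H}^3$ according as $K=1,0,-1$) by a discrete group of isometries acting freely and properly discontinuously — that is, it is a space form of curvature $K$, which is exactly the claimed conclusion. (Unlike Theorem \ref{thm:1}, no ALSC hypothesis is assumed here, so I would not attempt to further pin down the quotient; the statement only asks for ``a space form of constant sectional curvature $K$''.) The main obstacle is purely computational: carrying the expansion of $m_H$ to order $\rho^5$ with the $4K$ correction and double-checking that the $\Sc\equiv 6K$ substitution really does kill the sign-indefinite cross terms, so that the argument of Theorem \ref{prop:Riem0} goes through unchanged; once that bookkeeping is done, the rigidity and the classification steps are immediate.
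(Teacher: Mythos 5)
Your proposal is correct and follows essentially the same route as the paper: the paper likewise computes the generalised Hawking mass of $S_{p,\rho}(w_{p,\rho})$ by adding the term $4K\int_{{\mathbb S}^2}\sqrt{\det\mathring g}=16K\pi\rho^2-\frac{8K\pi}{9}\Sc_p\rho^4+\cO_p(\rho^5)$ to \eqref{eq:77}, deduces $\Sc\equiv 6K$ from the leading order, then $S\equiv 0$ from the next order (your ansatz for the $\rho^5$ coefficient is indeed consistent with the paper's \eqref{eq:expmHSprhow}, with $c=-\tfrac{1}{24}$, and the cross terms do vanish once $\Sc\equiv 6K$), and concludes constant sectional curvature $K$ via the three-dimensional identity $\Rm=\Ric\KN g-\tfrac14\Sc\,g\KN g$. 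Your explicit appeal to Killing--Hopf at the end is the only (harmless) addition, since the paper stops at constant curvature, which together with completeness and connectedness is exactly the stated space-form conclusion.
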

\begin{proof}
Let us compute the generalized Hawking mass of the perturbed geodesic spheres as before. Notice that the only difference with the standard Hawking mass is the extra term $4K\int_{{\mathbb S}^2}\sqrt{\det\mathring{g}}$.
Recalling \eqref{eq:AreaSprhow},  this is easily evaluated up to fourth order as
\begin{equation*}
    4K\int_{{\mathbb S}^2}\sqrt{\det\mathring{g}} = 16K\pi \rho^2 - \frac{8K\pi}{9}\Sc_p\rho^4+\cO_{p}(\rho^{5}).
\end{equation*}
Recalling \eqref{eq:77}, we obtain the following expansion for the  generalized Hawking mass of $S_{p,\rho}(w_{p,\rho})$:
\begin{equation}\label{eq:expmHSprhow}
    m_H(S_{p,\rho}(w_{p,\rho})) = \sqrt{\frac{|S_{p,\rho}(w_{p,\rho})|}{(16\pi)^3}}\left[\left(\frac{8\pi}{3}\Sc_p - 16K\pi\right)\rho^2 + \left(\frac{4\pi}{15}\Delta \Sc_p + \frac{16\pi}{45}\norm{S_{p}}^2 - \frac{4\pi}{27}\Sc_p^2 + \frac{8K\pi}{9}\Sc_p\right)\rho^4 + \cO_{p}(\rho^5)\right].
\end{equation}
Assuming that $m_H(S_{p,\rho}(w_{p,\rho}))\leq 0$ for $\rho>0$ sufficiently small yields  $\Sc_p \leq 6K$.
Since we assumed that $\Sc_p\geq 6K$ for all $p\in M$,  we have
\begin{equation}\label{eq:Sc=6K}
 \Sc\equiv 6K.
 \end{equation}
Inserting \eqref{eq:Sc=6K} into \eqref{eq:expmHSprhow} and evaluating at sufficiently small $\rho>0$ gives  $\frac{16\pi}{45}\norm{S_{p}}^2 \leq 0$ for every $p\in M$. Therefore the trace-free Ricci tensor vanishes: 
\begin{equation}\label{eq:S=0bis}
S \equiv 0.
 \end{equation}
Putting together \eqref{eq:Sc=6K} and \eqref{eq:S=0bis} gives
\begin{equation*}
\Ric \equiv 2K g.
\end{equation*}
Recalling that in dimension three the Riemann curvature tensor can be written as $\Rm = \Ric\KN g - \frac{1}{4} \Sc \, g\KN g $ where $\KN$ is the Kulkarni-Nomizu product (see for instance \cite[Corollary 7.26]{LeeRM}) we conclude that $g$ has  constant sectional curvature $K$.
\end{proof}

\begin{remark}\label{rem:HMHyp}
Notice that \eqref{eq:expmHSprhow} actually gives a strictly positive (yet small) lower bound on the generalised Hawking mass of the optimally perturbed geodesic sphere $S_{p,\rho}(w_{p,\rho})$, if $\Sc_p>6K$ or $\Sc \equiv 6K \, \& \, \|S_p\|\neq 0$ (and, as observed in the proof, such a point $p\in M$ always exists if $(M,g)$ does not have constant sectional curvature and $\Sc\geq 6K$). 

Even though the proof of Theorem \ref{Prop:OuterMin} made use of the AF assumption, we expect that for $\rho>0$ sufficiently small the surface $S_{p,\rho}(w_{p,\rho})$ is outward minimising also in a locally asymptotically hyperbolic framework.  We did not push in that direction since it does not seem to be useful in order to obtain a lower bound on the hyperbolic analogue of the Bartnik mass, in the spirit of Theorem \ref{thm:7-8}. Indeed, if for $\rho>0$ sufficiently small the surface $S_{p,\rho}(w_{p,\rho})$ is outward-minimising, one can start the (weak) Inverse Mean Curvature Flow in the sense of Huisken-Ilmanen \cite{HuiskenIlmanen} and the generalised Hawking mass is monotone non-decreasing also in this setting (see for instance \cite{neves} for more details). However, as proved by Neves \cite{neves}, it may happen that the asymptotic limit of the Hawking mass along the IMC flow exceeds the hyperbolic-ADM mass of the manifold, thus preventing us to repeat the proof of Theorem  \ref{thm:7-8} in the $K=-1$ case.  
\end{remark}



\subsection{$\mathbb{R}^3$ and $\mathbb{H}^3$ rigidity in the homogeneous setting}

In this section we replace the ALSC assumption in the rigidity Theorem \ref{thm:1} with the homogeneity condition.  Recall that  a Riemannian manifold $(M,g)$ is said to be \emph{homogeneous} if its isometry group ${\rm Isom}(M,g)$ acts transitively on $M$. In other words, if for every  $p,q \in M$, there exists $ \gamma \in {\rm Isom}(M,g)$ such that $\gamma(p) = q$.

Whilst of course our spatial universe is not  \textit{exactly} homogenous,  at cosmological scales the homogeneity provides a useful idealisation. Indeed,  spatial homogeneity is a standard assumption in Cosmology. For instance, it leads to an exact solution of Einstein's field equations, known as the Robertson-Walker metric for space-time \cite{Weinberg,HawkingEllis}.

\begin{theorem}\label{thm:2}
Let $K \in \{-1,0,1\}$ and let  $(M^3,g)$ be a connected, homogeneous Riemannian manifold with scalar curvature $\Sc\geq 6K$. If for every $p \in M$ there is a neighbourhood $U$ of $p$ such that the generalised Hawking mass \eqref{eq:defGenHaw} of every embedded sphere contained in $U$ is non-positive, then $(M^3,g)$ is isometric to
\begin{itemize}
\item  $\mathbb{H}^3$ (if $K=-1$); or 
\item $\R^m \times \mathbb{T}^{3-m}$, for some $0\leq m \leq 3$, where $\mathbb{T}^{3-m}$ is a flat torus of dimension $3-m$  (if $K=0$); or
\item ${\mathbb S}^3/\Gamma$  for some finite subgroup of isometries  $\Gamma< {\rm Iso} ({\mathbb S}^3)$ acting freely on ${\mathbb S}^3$  (if $K=1$).
\end{itemize}
\end{theorem}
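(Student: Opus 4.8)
The plan is to combine Theorem \ref{thm:9} with the classification of homogeneous space forms. By Theorem \ref{thm:9}, the hypotheses force $(M^3,g)$ to be isometric to a complete space form of constant sectional curvature $K\in\{-1,0,1\}$, i.e. a quotient of the model $X_K$ (which is $\mathbb{H}^3$, $\R^3$, or ${\mathbb S}^3$ respectively) by a discrete subgroup $\Gamma<{\rm Isom}(X_K)$ acting freely and properly discontinuously. Since $M$ is moreover \emph{homogeneous}, the structure of $\Gamma$ is highly constrained: one needs $\Gamma$ to be such that the quotient $X_K/\Gamma$ admits a transitive isometry group. The remaining work is to identify, in each of the three curvature cases, exactly which such $\Gamma$ are allowed, which is a classification one can extract from the literature on homogeneous space forms (e.g. Wolf \cite{Wolf}).

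First I would treat $K=-1$. Here $X_K=\mathbb{H}^3$. If $\Gamma$ is nontrivial, pick $1\neq\gamma\in\Gamma$; since $\gamma$ is a nontrivial isometry of $\mathbb{H}^3$ acting freely, it is either hyperbolic or parabolic, and in either case it has a well-defined (minimal) translation length / displacement function whose infimum over $\mathbb{H}^3$ either is attained on a lower-dimensional set (the axis, for a hyperbolic element) or is zero but not attained (for a parabolic element). In both cases the displacement function of $\gamma$ is a nonconstant function on $\mathbb{H}^3$ that is invariant under the deck group, hence descends to a nonconstant function on $M=\mathbb{H}^3/\Gamma$; but any isometry-invariant geometric function such as $q\mapsto \inf_{1\neq\gamma\in\Gamma}\sfd(\tilde q,\gamma\tilde q)$ must be constant on a homogeneous manifold, a contradiction. (Equivalently: the injectivity radius is constant on a homogeneous manifold, yet for a nontrivial discrete group of hyperbolic/parabolic isometries of $\mathbb{H}^3$ the injectivity radius is nonconstant.) Hence $\Gamma=\{{\rm Id}\}$ and $M\cong\mathbb{H}^3$.

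Next, $K=0$. Here $X_K=\R^3$ and $\Gamma$ is a discrete subgroup of ${\mathbb E}(3)={\rm O}(3)\ltimes\R^3$ acting freely; the quotient is a compact or noncompact flat manifold, and by Bieberbach's theorems $\Gamma$ contains a finite-index subgroup of pure translations. Homogeneity of $M=\R^3/\Gamma$ forces the rotational parts to be trivial: indeed, an element $\gamma=(r,a)$ with $r\neq{\rm Id}$ has a nonconstant displacement function (it is unbounded in the directions fixed by $r$ and bounded in the others, reflecting the screw-motion structure), which would descend to a nonconstant isometry-invariant function on the homogeneous $M$ — impossible unless there is no such $\gamma$, i.e. $\Gamma$ consists only of translations. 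A discrete subgroup of the translation group $\R^3$ is a lattice $\Z^m$ for some $0\le m\le 3$, after choosing suitable coordinates, so $M\cong\R^m\times\mathbb{T}^{3-m}$, as claimed. Finally $K=1$: here $X_K={\mathbb S}^3$, which is compact, so $\Gamma$ is automatically \emph{finite}, and $M={\mathbb S}^3/\Gamma$ for a finite subgroup $\Gamma<{\rm Isom}({\mathbb S}^3)$ acting freely; moreover ${\mathbb S}^3$ itself is homogeneous and the quotient by a finite free action is automatically homogeneous only when $\Gamma$ lies in a subgroup acting by left translations in the $S^3=SU(2)$ group structure (spherical space forms need not all be homogeneous), but the statement merely asserts $M\cong {\mathbb S}^3/\Gamma$ for such a $\Gamma$, which is immediate from Theorem \ref{thm:9}. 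Thus in each case the conclusion follows.

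\textbf{Main obstacle.} The genuine content beyond invoking Theorem \ref{thm:9} is the homogeneity-forces-no-twisting argument in the $K=-1$ and $K=0$ cases: one must argue cleanly that a nontrivial deck transformation with either a rotational part (flat case) or hyperbolic/parabolic type (hyperbolic case) produces an isometry-invariant nonconstant function (injectivity radius, or a displacement/systole function) on the quotient, contradicting homogeneity. The delicate point is that the injectivity radius of a homogeneous manifold is genuinely constant (this is standard: isometries preserve it and act transitively), so the argument is really about checking that the relevant candidate invariant is indeed nonconstant upstairs whenever $\Gamma$ is nontrivial in the hyperbolic case, or has nontrivial rotational parts in the flat case — which is exactly the classical classification of homogeneous space forms, so one may alternatively just cite \cite{Wolf} for all three cases.
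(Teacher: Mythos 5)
Your proposal follows essentially the same route as the paper: after observing that homogeneity supplies the completeness and empty-boundary hypotheses of Theorem \ref{thm:9}, which yields constant sectional curvature $K$, the paper concludes by citing the classification of homogeneous space forms (\cite[Theorem 2.7.1]{Wolf}) — exactly the fallback you offer. Your additional hands-on sketch of that classification is not needed and is slightly loose (the displacement function of a single deck transformation $\gamma$ is not $\Gamma$-invariant, only the infimum $q\mapsto\inf_{1\neq\gamma\in\Gamma}\sfd(\tilde q,\gamma\tilde q)$ is, and its nonconstancy is asserted rather than proved), but since you ultimately defer to Wolf, the argument is correct and matches the paper's.
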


\begin{proof}
Since homogeneity implies completeness and that $\partial M=\emptyset$, Theorem \ref{thm:9} yields that $M$ has constant sectional curvature $K$. The conclusion now follows from the classical classification of homogenous spaces of constant sectional curvature  (see for instance \cite[Theorem 2.7.1]{Wolf}). Notice that in the case $K=1$ one can be more precise about the type of quotients appearing, at the price of a more technical statement.
\end{proof}

%
%

\subsection{$\mathbb{R}^3$ and $\mathbb{H}^3$ rigidity under global asymptotic volume growth assumptions}

In this section we replace the ALSC assumption in the rigidity Theorem \ref{thm:1} with a global volume growth assumption (satisfied, for instance, by asymptotically flat  spaces and asymptotically hyperbolic spaces).  Such an assumption is obtained by comparing the volume growth of metric balls in the space under consideration with metric balls in an appropriate model space, as the radius goes to infinity.

\begin{definition}
Let $K\in \{-1,0\}$ and let $(M^3,g)$ be a complete Riemannian manifold without boundary and  with $\Sc\geq 6K$.
We say that $(M^3,g)$ satisfies the $K$-Global Asymptotic Volume property ($K$-GAVP) if:
\begin{equation}\label{eq:K-GAV}
    \limsup_{r\to\infty} \frac{{\rm Vol}_g(B^g_r(p))}{{\rm Vol}_K(r)} \geq 1, 
\end{equation}
where ${\rm Vol}_K(r)$ denotes the volume of a metric ball of radius $r$ in the $3$-dimensional simply connected space of constant sectional curvature $K$.
\end{definition}

There are various ways to define an asymptotically hyperbolic manifold. For the conformal compactification approach, see for instance Wang \cite{Wang}. In closer analogy to Definition \ref{def:AFRiemMan}, we take the asymptotic chart approach (see for instance \cite{ChruscielHerzlich, Herzlich, Sakovich} for discussions on the physical relevance of such metrics):

\begin{definition}\label{def:4}
A 3-dimensional Riemannian manifold $(M,g)$ is said to be \emph{asymptotically hyperbolic (AH)} if there is a compact subset $C\subset M$ and a diffeomorphism $\phi : M\setminus C \rightarrow \R^3 \setminus \overline{B_1(0)}$ such that the metric satisfies:
\begin{align*}
    \abs{g_{\mu\nu} - (g_{\mathbb{H}^3})_{\mu\nu}} &= \cO(r^{-s})
\end{align*}
for some $s>0$ in the chart $\phi$. Here $\mathbb{H}^3 = (\R^3,g_{\mathbb{H}^3})$ denotes the standard hyperbolic space with metric $g_{\mathbb{H}^3} = \frac{1}{1+r^2}dr^2 + r^2g_{\mathbb {\mathbb S}^2}$ (in polar coordinates).
\end{definition}

\begin{remark}\label{Rem1}
AF (resp. AH) manifolds satisfy the $0$-Global Asymptotic Volume property (resp. $-1$-GAVP). For the AF case, consider a straight line segment $\gamma$ in $\R^3$,  parameterised on the interval $(0,\sqrt{r})$ by $\gamma(t) = \sqrt{r}t\hat{a}$, for some unit vector $\hat{a}\in \R^3$. It is easy to see that if $(M,g)$ is AF  with metric $g|_{M\setminus C} = \bar{g} + h$, with  $h = \cO(r^{-s})$ for some $s>0$, then there exists a constant $A>0$ such that
$$
{\rm Length}_g(\gamma)\leq A + (1+ A r^{-s}) \,  {\rm Length}_{\delta} (\gamma),  \quad \text{for all } r\geq 1,
$$
giving that $B^{\delta}_r(p) \subset B^g_{A r(1 + Ar^{-s})}(p)$.  Therefore ${\rm Vol}_\delta(B^{\delta}_r(p)) \leq {\rm Vol}_g(B^g_{A+ r(1 + Ar^{-s})}(p))$. Sending $r\rightarrow \infty$ yields the $0$-GAVP:
\begin{equation*}
    \limsup_{r\to\infty} \frac{{\rm Vol_g}(B^g_r(p))}{{\rm Vol_\delta}(B^{\delta}_r(p))} \geq 1. 
\end{equation*}
The hyperbolic case is analogous, replacing straight lines by  minimising geodesics in $\mathbb{H}^3$ and applying Definition \ref{def:4}. 
\end{remark}

\begin{theorem}\label{thm:4}
Let $K\in \{ -1, 0\}$. Let $(M^3,g)$ be a connected, complete Riemannian manifold without boundary, with scalar curvature $\Sc\geq 6K$, and satisfying the $K$-Global Asymptotic Volume property. If for every $p \in M$ there is a neighbourhood $U$ of $p$ such that the generalised Hawking mass of every embedded sphere contained in $U$ is non-positive, then $(M,g)$ is isometric to $\mathbb{H}^3$ (if $K=-1$) or $\R^3$ (if $K=0$).
\end{theorem}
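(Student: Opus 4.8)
The plan is to first reduce to a space‑form statement and then exploit the volume growth. Since homogeneity is not assumed here, but completeness and $\partial M=\emptyset$ are, Theorem~\ref{thm:9} applies verbatim and yields that $(M^3,g)$ has constant sectional curvature $K$. By the Killing--Hopf theorem (see \cite{Wolf}), $(M,g)$ is then isometric to $\widetilde{M}/\Gamma$, where $\widetilde M$ is $\mathbb{H}^3$ if $K=-1$ and $\R^3$ if $K=0$, and $\Gamma$ is a discrete group of isometries of $\widetilde M$ acting freely and properly discontinuously; let $\pi:\widetilde M\to M$ be the associated Riemannian covering. It thus suffices to show that the $K$‑Global Asymptotic Volume property forces $\Gamma=\{{\rm Id}\}$, which gives $(M,g)\cong \widetilde M$, i.e. $\R^3$ if $K=0$ and $\mathbb{H}^3$ if $K=-1$.

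Next I would set up a Dirichlet fundamental domain and a volume comparison. Fix $p\in M$, pick $\tilde p\in\pi^{-1}(p)$, and let $F:=\{x\in\widetilde M:\sfd(x,\tilde p)\le\sfd(x,\gamma\tilde p)\ \ \forall\gamma\in\Gamma\}$ be the Dirichlet domain centred at $\tilde p$. Since the nearest lift of $\tilde p$ to any point of $F$ is $\tilde p$ itself, one gets $B^g_r(p)=\pi(B^{\widetilde M}_r(\tilde p))$ and $\pi^{-1}(B^g_r(p))\cap F=B^{\widetilde M}_r(\tilde p)\cap F$, whence ${\rm Vol}_g(B^g_r(p))={\rm Vol}_{\widetilde M}\big(B^{\widetilde M}_r(\tilde p)\cap F\big)$. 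For any $\gamma\in\Gamma$, the open half‑space $H_\gamma:=\{x:\sfd(x,\gamma\tilde p)<\sfd(x,\tilde p)\}$ is disjoint from $F$, so $B^{\widetilde M}_r(\tilde p)\cap F\subset B^{\widetilde M}_r(\tilde p)\setminus H_\gamma$ and therefore
\[
\frac{{\rm Vol}_g(B^g_r(p))}{{\rm Vol}_K(r)}\;\le\; 1-\frac{{\rm Vol}_{\widetilde M}\big(B^{\widetilde M}_r(\tilde p)\cap H_\gamma\big)}{{\rm Vol}_{\widetilde M}\big(B^{\widetilde M}_r(\tilde p)\big)}.
\]

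The crux is then to show that, for a nontrivial $\gamma$, the half‑space $H_\gamma$ occupies a definite fraction of large geodesic balls. Suppose $\Gamma\ne\{{\rm Id}\}$ and take $\gamma\ne{\rm Id}$; freeness gives $q_0:=\gamma\tilde p\ne\tilde p$ and $a:=\sfd(\tilde p,q_0)>0$, and the bisector $\partial H_\gamma$ is a totally geodesic plane at distance $a/2$ from $\tilde p$, with $q_0\in{\rm int}\,H_\gamma$. In geodesic polar coordinates at $\tilde p$ (where $dV=j_K(t)^2\,dt\,d\sigma$ with $j_0(t)=t$, $j_{-1}(t)=\sinh t$), the set $U\subset S^2$ of unit directions $v$ such that $\Exp_{\tilde p}(tv)\in H_\gamma$ for all $t\ge a$ is open and nonempty: the direction pointing towards $q_0$ lies in $U$ because a geodesic meets $\partial H_\gamma$ at most once, and openness follows from the same crossing observation together with $q_0\in{\rm int}\,H_\gamma$. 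Hence $B^{\widetilde M}_r(\tilde p)\cap H_\gamma$ contains the truncated cone $\{\Exp_{\tilde p}(tv):v\in U,\ a\le t<r\}$ of volume $\sigma(U)\int_a^r j_K(t)^2\,dt$, so that $\liminf_{r\to\infty}\frac{{\rm Vol}_{\widetilde M}(B^{\widetilde M}_r(\tilde p)\cap H_\gamma)}{{\rm Vol}_{\widetilde M}(B^{\widetilde M}_r(\tilde p))}\ge\frac{\sigma(U)}{4\pi}>0$ (this limit equals $\tfrac12$ in the flat case). Combined with the displayed inequality this gives $\limsup_{r\to\infty}\frac{{\rm Vol}_g(B^g_r(p))}{{\rm Vol}_K(r)}\le 1-\frac{\sigma(U)}{4\pi}<1$, contradicting the $K$‑GAVP. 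Therefore $\Gamma=\{{\rm Id}\}$, completing the proof.

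The only non‑routine step is this last one: one must produce an \emph{open} cone of directions that stay inside $H_\gamma$ for \emph{all} large radii, so as to capture a fixed fraction of the (exponentially, in the hyperbolic case) growing ball volume — the flat case is immediate since $H_\gamma$ is then a genuine half‑space of density $\tfrac12$. The key enabling fact is the elementary observation that a geodesic crosses a totally geodesic hyperplane at most once, which upgrades "$\Exp_{\tilde p}(av)\in H_\gamma$" to "$\Exp_{\tilde p}(tv)\in H_\gamma$ for all $t\ge a$" and makes $U$ open.
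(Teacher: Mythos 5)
Your proof is correct, but after the common first step it takes a genuinely different route from the paper. Both arguments begin by invoking Theorem \ref{thm:9} to conclude that $(M,g)$ has constant sectional curvature $K$. The paper then finishes in one stroke: since $\Ric\equiv 2Kg$, Bishop--Gromov gives that $r\mapsto {\rm Vol}_g(B^g_r(p))/{\rm Vol}_K(r)$ is non-increasing and $\leq 1$, and the $K$-GAVP forces the ratio to be identically $1$, so the equality (rigidity) case of Bishop--Gromov yields that every ball, hence $M$, is the simply connected model. You instead pass through Killing--Hopf to write $M=\widetilde M/\Gamma$ and rule out $\Gamma\neq\{{\rm Id}\}$ by a Dirichlet-domain volume deficit: the identities ${\rm Vol}_g(B^g_r(p))={\rm Vol}_{\widetilde M}(B^{\widetilde M}_r(\tilde p)\cap F)$ and $F\cap H_\gamma=\emptyset$ are standard and correctly justified, and your key step --- that a geodesic ray from $\tilde p$ which is in $H_\gamma$ at time $a$ stays in $H_\gamma$ for all $t\geq a$, because otherwise it would meet the totally geodesic bisector twice and hence lie in it --- does produce an open, nonempty cone of directions capturing a fixed fraction $\sigma(U)/(4\pi)$ of the model ball volume, contradicting the $K$-GAVP. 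What each approach buys: the paper's is shorter and needs no covering-space theory, but leans on the rigidity statement in Bishop--Gromov; yours avoids that rigidity case entirely, is quantitative (it bounds $\limsup_r {\rm Vol}_g(B^g_r(p))/{\rm Vol}_K(r)$ strictly below $1$ in terms of the shortest orbit distance), and is structurally parallel to the paper's own Proposition \ref{prop:ALSCR3}, where a space-form quotient is trivialised using an asymptotic hypothesis; the price is the extra machinery of Killing--Hopf, fundamental domains, and the half-space volume estimate, which is only genuinely needed in the hyperbolic case.
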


\begin{proof}
By recalling  Theorem \ref{thm:9}, the assumptions on the scalar curvature and Hawking mass imply that $(M^3,g)$ has constant sectional curvature $K$. Therefore the Ricci curvature of $(M^3,g)$ is identically equal to  $2Kg$.  The Bishop-Gromov Theorem (see for instance \cite{LeeRM,Petersen}) gives that the ratio
$$
r\mapsto  \frac{{\rm Vol}_g(B^g_r(p))}{{\rm Vol}_K(r)} \text{ is non-increasing and is bounded above by $1$}.
$$
Moreover, if equality holds for some $r>0$, then the metric ball $B^g_r(p)$ in $(M,g)$ is isometric to a metric ball of radius $r$ in the simply connected $3$-dimensional space of constant sectional curvature $K$.  It follows that
\begin{equation}\label{eq:RatVol1}
\limsup_{r\to \infty}  \frac{{\rm Vol}_g(B^g_r(p))}{{\rm Vol}_K(r)} \leq 1,
\end{equation}
with equality if and only if for every $r>0$ the metric ball  $B^g_r(p)$ in $(M,g)$ is isometric to a metric ball of radius $r$ in the simply connected $3$-dimensional space of constant sectional curvature $K$ or, equivalently, if  $(M,g)$ is globally isometric to the simply connected $3$-dimensional space of constant sectional curvature $K$.
\\Since the equality in \eqref{eq:RatVol1} is forced by the $K$-GAVP, the result follows.
\end{proof}

\section{Appendix}

\subsection{A $\sup$-Hawking mass for AF manifolds with non-negative scalar curvature}\label{SS:supHawk}
Inspired by the results of the present paper, it is natural to propose a slight variant of the Hawking mass. Indeed, the standard Hawking mass, while being very useful in applications (for instance in the proof of the Riemannian Penrose inequality via Inverse Mean Curvature Flow by Huisken-Ilmanen \cite{HuiskenIlmanen}), has some inconvenient features. For instance, it can be negative and it has no clear monotone property under inclusion. Even though the list of properties that are desirable for a quasi-local mass is open for debate, let us mention some natural ones.
\\

Let $(M^3,g)$ be an asymptotically flat Riemannian manifold with non-negative scalar curvature and with (possibly empty) horizon boundary $\partial M$.  
\\According to Bartnik \cite{Bartnik},  a  ``good'' notion of quasi-local mass $m(\Omega)$ for subsets $\Omega\subset M$ should satisfy:
\begin{enumerate}
\item [(i)]  $m(\Omega)$ should be uniquely defined for every domain $\Omega$;
\item [(ii)] Positivity: $m(\Omega)>0$ unless $\Omega\subset \R^{3}$, in which case $m(\Omega)=0$;
\item [(iii)] Monotonicity: if $\Omega_{1}\subset \Omega_{2}\subset M$, then $m(\Omega_{1})\leq m(\Omega_{2})$;
\item [(iv)] Asymptotic to ADM mass: If $\{\Omega_{k}\}_{k=1}^{\infty}$ is an exhaustion
of $M$, then $m(\Omega_{k})\to m_{\ADM}(M,g)$ as $k\to \infty$.
\end{enumerate}
Even if not explicitly requested by Bartnik, it is also natural to require: 
\begin{enumerate}
\item [(v)] Compatibility with Schwartzshild: Let $m_{Sch}>0$ and consider $\R^{3}\setminus B^{\delta}_{2m_{Sch}}(0) \simeq {\mathbb S}^{2}\times [2m_{Sch}, \infty)$ endowed with the Schwarzshild metric $g^{m_{Sch}}$ of mass $m_{Sch}>0$: 
\begin{equation}\label{eq:defgSch}
g^{m_{Sch}}:= \left(1- \frac{2m_{Sch}}{r}\right)^{-1} dr \otimes dr + r^{2} g_{{\mathbb S}^{2}}.
\end{equation}
Then  the quasi-local mass of every subset (satisfying suitable geometric conditions) containing the horizon $\{r=2 m_{Sch}\}$ is equal to $m_{Sch}$.
\end{enumerate}
\medskip

\noindent
We propose the next definition.
\begin{definition}[$\sup$-Hawking mass]\label{def:supHaw}
Let $(M^3,g)$ be an asymptotically flat Riemannian manifold  in the sense of Definition \ref{def:AFRiemMan} with non-negative scalar curvature and with (possibly empty) horizon boundary $\partial M$. 
For every open subset $\Omega\subset M$, denote with $\bar{\Omega}$ its topological closure and define the \emph{$\sup$-Hawking mass} as
\begin{equation}\label{eq:defmSH}
m_{SH}(\Omega):=\sup \{m_H(\partial \Omega') \mid \Omega'\subset \bar{\Omega} \text{ such that  $\partial \Omega'\simeq {\mathbb S}^2$ is smooth  and outer-minimising in $\bar{\Omega}$} \}.
\end{equation}

\end{definition}

It is clear that, if $\partial \Omega \simeq {\mathbb S}^2$ is smooth then
\begin{equation}\label{eq:mHleqmSH}
m_{H}(\partial \Omega)\leq m_{SH}(\Omega).
\end{equation}

A benefit of the proposed $\sup$-Hawking mass \eqref{eq:defmSH} is that it satisfies (a suitable version) of all the requirements (i)--(v). Property (i) is clearly satisfied, so let us discuss the others. The proof of (ii) is a nice application of the present paper as it involves basically all of the main results.
\\

\begin{prop}[Validity of (ii)] \label{prop:QLM(ii)}
Let $(M,g)$ and $\Omega\subset M$ be as in Definition \ref{def:supHaw}. 

Then $ m_{SH}(\Omega)\geq 0 $ with equality if and only if $\Omega\setminus \partial M$ is locally isometric to Euclidean $\R^{3}$.
\end{prop}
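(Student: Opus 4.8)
\textbf{Proof plan for Proposition \ref{prop:QLM(ii)}.}

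The plan is to argue in both directions. For the inequality $m_{SH}(\Omega)\geq 0$, I would first note that this is essentially immediate: if $\Omega$ is nonempty and open, then for any $p\in \Omega\setminus\partial M$ and $\rho>0$ small enough, the perturbed geodesic sphere $S_{p,\rho}(w_{p,\rho})$ (with $w_{p,\rho}$ the optimal perturbation from Lemma \ref{lem:estw}) bounds a region $B_{p,\rho}(w_{p,\rho})\subset \bar\Omega$, and by Theorem \ref{Prop:OuterMin} it is outer-minimising in $M$, hence a fortiori in $\bar\Omega$ (since fewer competitors are allowed). Actually one needs outer-minimising in $\bar\Omega$, which follows from outer-minimising in $M$. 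Since $\Sc\geq 0$ is not assumed here pointwise away from what the AF definition gives — wait, it is, $M$ has non-negative scalar curvature — the expansion \eqref{eq:mHSprhow} gives $m_H(S_{p,\rho}(w_{p,\rho})) = \tfrac{1}{12}\Sc_p\rho^3 + \cO_p(\rho^4)$ if $\Sc_p>0$, which is positive for small $\rho$; and if $\Sc_p=0$ then the $\rho^5$ term is $\big(\tfrac{1}{120}\Delta\Sc(p)+\tfrac{1}{90}\|S_p\|^2\big)\rho^5$, which is $\geq 0$ up to the error only if we know the bracket is non-negative — this is not automatic pointwise. The clean way is instead: one of the admissible competitors $\Omega'$ in \eqref{eq:defmSH} is $\Omega'$ with $\partial\Omega'=S_{p,\rho}(w_{p,\rho})$, and by \eqref{eq:mBOmegamHSp}-type reasoning (i.e.\ \eqref{mBgeqmH} applied to the outer-minimising sphere inside the AF manifold $M$ itself) we get $m_H(S_{p,\rho}(w_{p,\rho}))\leq m_{\ADM}(M,g)$, which does not directly give non-negativity of $m_H$ itself. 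So the correct route to $m_{SH}(\Omega)\geq 0$ is through \emph{some} outer-minimising sphere with non-negative Hawking mass; a robust choice is a small geodesic sphere around a point where $\Sc>0$ if such a point exists in $\Omega$, and if $\Sc\equiv 0$ on all of $\Omega$ then by the argument in Theorem \ref{prop:Riem0} combined with \eqref{eq:mHSprhow} either $\|S\|\not\equiv 0$ somewhere (giving a positive-Hawking-mass competitor at small scale, using that the $\rho^5$ coefficient is then $\tfrac{1}{90}\|S_p\|^2\geq 0$ plus $\tfrac{1}{120}\Delta\Sc(p)=0$) or $\Omega$ is flat. In the flat case every small geodesic sphere has Hawking mass $o(\rho^5)$ but in fact round spheres in $\R^3$ have Hawking mass exactly $0$, so $m_{SH}(\Omega)\geq 0$ holds with supremum $0$.

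For the equality case, suppose $m_{SH}(\Omega)=0$. By \eqref{eq:mHleqmSH}-type reasoning applied to perturbed geodesic spheres: for every $p\in\Omega\setminus\partial M$ and every sufficiently small $\rho$, the surface $S_{p,\rho}(w_{p,\rho})$ is outer-minimising in $M$ by Theorem \ref{Prop:OuterMin}, hence outer-minimising in $\bar\Omega$, hence it is an admissible competitor in \eqref{eq:defmSH}, so $m_H(S_{p,\rho}(w_{p,\rho}))\leq m_{SH}(\Omega)=0$. Therefore the hypothesis \eqref{eq:HpRiem0} of Theorem \ref{prop:Riem0} holds at every $p\in\Omega\setminus\partial M$: indeed $\limsup_{\rho\downarrow 0}\rho^{-5}m_H(S_{p,\rho}(w_{p,\rho}))\leq 0$. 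Since $M$ (and hence $\Omega$) has non-negative scalar curvature, Theorem \ref{prop:Riem0} applies directly and yields that $\Omega\setminus\partial M$ is locally isometric to Euclidean $\R^3$. Conversely, if $\Omega\setminus\partial M$ is locally isometric to $\R^3$, then every surface $\Sigma\subset\bar\Omega$ with $\Sigma\subset\Omega\setminus\partial M$ sits inside a flat region, and by Willmore's inequality $m_H(\Sigma)\leq 0$ for all such $\Sigma$; taking the supremum over admissible $\Omega'$ (whose boundaries lie in $\bar\Omega$) gives $m_{SH}(\Omega)\leq 0$, while $m_{SH}(\Omega)\geq 0$ was already shown, so $m_{SH}(\Omega)=0$. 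One subtlety: if $\partial M\cap\bar\Omega\neq\emptyset$ one must check that admissible $\Omega'$ whose boundary touches $\partial M$ do not produce positive Hawking mass; but boundary components of $\partial M$ are minimal, and if $\Omega'$ contains a component of $\partial M$ then $\partial\Omega'$ — being outer-minimising and homeomorphic to $S^2$ — would force, via the Riemannian Penrose inequality, a strictly positive lower bound, contradicting $m_{SH}(\Omega)=0$ unless no such $\Omega'$ exists; so when $m_{SH}(\Omega)=0$ the minimal boundary components play no role, and the conclusion is about $\Omega\setminus\partial M$ as stated.

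The main obstacle I anticipate is the careful bookkeeping in the ``$m_{SH}(\Omega)\geq 0$'' direction when $\Sc\equiv 0$ on $\Omega$: one cannot simply quote the $\rho^3$-term of \eqref{eq:mHSprhow}, and one must instead distinguish whether $\|S\|$ vanishes identically on $\Omega$. If $\|S\|\not\equiv 0$, pick $p$ with $\|S_p\|>0$; since also $\Sc\equiv 0$ near $p$ forces $\Delta\Sc(p)=0$, the $\rho^5$-coefficient in \eqref{eq:mHSprhow} is $\tfrac{1}{90}\|S_p\|^2>0$, giving $m_H(S_{p,\rho}(w_{p,\rho}))>0$ for small $\rho$ and hence $m_{SH}(\Omega)>0$. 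If $\|S\|\equiv 0$ and $\Sc\equiv 0$ on $\Omega$, then $\Ric\equiv 0$, so (as in Theorem \ref{prop:Riem0}) $\Omega\setminus\partial M$ is flat; then round geodesic spheres have Hawking mass $\to 0$ and one checks the supremum is exactly $0$, using Willmore's inequality for the upper bound. The remaining case $\Sc_p>0$ for some $p$ is the easy one. Assembling these cases cleanly, while keeping track of the ``outer-minimising in $\bar\Omega$ vs.\ in $M$'' distinction (the former being weaker, so Theorem \ref{Prop:OuterMin} suffices), is the only genuinely delicate point; everything else follows from the already-established Theorems \ref{prop:Riem0} and \ref{Prop:OuterMin} and the expansion \eqref{eq:mHSprhow}.
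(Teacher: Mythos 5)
Your proposal is correct and takes essentially the same route as the paper's proof: admissibility of the (optimally perturbed) geodesic spheres as competitors in \eqref{eq:defmSH} via Theorem \ref{Prop:OuterMin}, the expansion \eqref{eq:mHSprhow} of Proposition \ref{prop:expHawk}, and Theorem \ref{prop:Riem0} for the non-negativity and the ``equality only if flat'' direction, with small round spheres (and a Willmore-type upper bound, which you make explicit while the paper leaves it implicit) handling the converse in the flat case. Your side digression invoking the Riemannian Penrose inequality for competitors enclosing horizon components is unnecessary and somewhat circular as phrased, but it is also harmless: every admissible competitor has boundary $\simeq{\mathbb S}^2$, so in the locally flat case the lifting/developing plus Willmore argument already gives non-positive Hawking mass for all of them.
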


\begin{proof}
For every $p\in \Omega\setminus \partial M$ let $S_{p,\rho}(w_{p,\rho})$ be the optimally perturbed geodesic sphere constructed in Lemma \ref{lem:estw}. Of course, for $\rho>0$ small enough, it holds that  $S_{p,\rho}(w_{p,\rho})\subset \Omega\setminus \partial M$. 

From Theorem \ref{Prop:OuterMin},  $S_{p,\rho}(w_{p,\rho})$ is outer-minimising in $M$ (and thus in $\bar \Omega$) for $\rho>0$ small enough. Thus, 
from the very definition of the sup-Hawking mass  \eqref{eq:defmSH} it holds that
\begin{equation}\label{eq:SHmOmegaHmSp}
m_{H}(S_{p,\rho}(w_{p,\rho})) \leq m_{SH}(\Omega), \quad \forall p\in \Omega\setminus \partial M, \, \forall \rho\in (0, \bar{\rho}_{p}),
\end{equation}
for some suitable $\bar{\rho}_{p}>0$ depending on $p\in \Omega\setminus \partial M$.
The combination of Proposition \ref{prop:expHawk} and Theorem \ref{prop:Riem0} then yields that $ m_{SH}(\Omega)\geq 0 $ with equality only if $\Omega\setminus \partial M$ is locally isometric to Euclidean $\R^{3}$. Conversely,  if $\Omega\setminus \partial M$ is locally isometric to $\R^{3}$ then one can choose sufficiently small  round spheres  as competitors in \eqref{eq:defmSH} and obtain that $ m_{SH}(\Omega)=  0 $. 
%
\end{proof}

\begin{prop}[Validity of (iii)] \label{prop:MonmSH}
Let $(M,g)$ and $\Omega_{1}\subset \Omega_{2}\subset M$ be as in Definition \ref{def:supHaw} with $\partial \Omega_{1}$  outer-minimising in $\bar{\Omega}_{2}$. 
 Then $m_{SH}(\Omega_{1})\leq m_{SH}(\Omega_{2})$.
\end{prop}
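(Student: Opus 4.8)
The plan is to show that every admissible competitor in the definition of $m_{SH}(\Omega_1)$ is also an admissible competitor in the definition of $m_{SH}(\Omega_2)$, so that the supremum over the larger family dominates. Concretely, let $\Omega'\subset\bar\Omega_1$ be such that $\partial\Omega'\simeq{\mathbb S}^2$ is smooth and outer-minimising in $\bar\Omega_1$; I want to conclude that $\partial\Omega'$ is also outer-minimising in $\bar\Omega_2$, whence $m_H(\partial\Omega')\le m_{SH}(\Omega_2)$, and taking the supremum over all such $\Omega'$ gives the claim.

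The key step is therefore the transitivity of the outer-minimising property under the inclusion $\bar\Omega_1\subset\bar\Omega_2$, using the hypothesis that $\partial\Omega_1$ is outer-minimising in $\bar\Omega_2$. First I would recall that $\partial\Omega'$ outer-minimising in $\bar\Omega_1$ means $\rP(\Omega')\le\rP(\Omega'')$ for every finite-perimeter, finite-volume set $\Omega''$ with $\Omega'\subset\Omega''\subset\bar\Omega_1$ (more precisely, competitors are taken in the ambient manifold but confined to $\bar\Omega_1$, as in the definition used throughout the paper). Now take an arbitrary competitor $\Omega''$ with $\Omega'\subset\Omega''$, $\Omega''\subset\bar\Omega_2$, of finite perimeter and volume. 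The idea is to ``project'' $\Omega''$ back inside $\bar\Omega_1$ by intersecting with $\Omega_1$: set $\tilde\Omega:=\Omega''\cap\bar\Omega_1$. Then $\Omega'\subset\tilde\Omega\subset\bar\Omega_1$, so by the outer-minimising property in $\bar\Omega_1$ we get $\rP(\Omega')\le\rP(\tilde\Omega)$. It remains to bound $\rP(\tilde\Omega)=\rP(\Omega''\cap\bar\Omega_1)$ by $\rP(\Omega'')$; for this I would use the standard submodularity inequality for perimeter together with the hypothesis that $\partial\Omega_1$ is outer-minimising in $\bar\Omega_2$: since $\Omega''\cup\Omega_1$ contains $\Omega_1$ and sits inside $\bar\Omega_2$, we have $\rP(\Omega_1)\le\rP(\Omega''\cup\Omega_1)$, and combining with $\rP(\Omega''\cup\Omega_1)+\rP(\Omega''\cap\Omega_1)\le\rP(\Omega'')+\rP(\Omega_1)$ yields $\rP(\Omega''\cap\Omega_1)\le\rP(\Omega'')$, hence $\rP(\Omega')\le\rP(\Omega'')$. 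This shows $\partial\Omega'$ is outer-minimising in $\bar\Omega_2$.

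Having established that, the conclusion is immediate: for every competitor $\Omega'$ admissible for $m_{SH}(\Omega_1)$ we have $m_H(\partial\Omega')\le m_{SH}(\Omega_2)$ by the definition \eqref{eq:defmSH} applied to $\Omega_2$; taking the supremum over all such $\Omega'$ gives $m_{SH}(\Omega_1)\le m_{SH}(\Omega_2)$.

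I expect the main (and only genuine) obstacle to be a careful treatment of the measure-theoretic boundary issues: one must make sure that the competitor class in \eqref{eq:defmSH} (sets with closure in $\bar\Omega_i$, smooth outer-minimising boundary diffeomorphic to a sphere) behaves well under the operations $\Omega''\mapsto\Omega''\cap\bar\Omega_1$ and $\Omega''\mapsto\Omega''\cup\Omega_1$, and in particular that it suffices to test outer-minimality against finite-perimeter sets rather than smooth ones (this is the standard relaxation, and is what makes the submodularity argument work). One should also note that replacing $\bar\Omega_i$ by $\Omega_i$ in these set operations does not affect perimeters or the inclusion relations, since $\partial\Omega_i$ is Lebesgue-negligible. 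Apart from this bookkeeping, the argument is a routine application of submodularity of the perimeter and the hypothesis on $\partial\Omega_1$.
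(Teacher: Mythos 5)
Your proposal is correct and follows essentially the same route as the paper: the paper's proof simply asserts that outer-minimality of $\partial\Omega'$ in $\bar{\Omega}_1$ combined with outer-minimality of $\partial\Omega_1$ in $\bar{\Omega}_2$ gives outer-minimality of $\partial\Omega'$ in $\bar{\Omega}_2$, and then invokes the definition \eqref{eq:defmSH}. Your intersection/union argument via submodularity of the perimeter supplies exactly the detail behind that asserted transitivity (the same standard inequality $\rP(A\cup B)+\rP(A\cap B)\leq \rP(A)+\rP(B)$ is used in the paper's proof of Theorem \ref{Prop:OuterMin}), so it is an elaboration of the paper's argument rather than a different approach.
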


\begin{proof}
If $\Omega'\subset \bar{\Omega}_{1}$ satisfies that $\partial \Omega'\simeq {\mathbb S}^2$ is smooth and outer-minimising in $\bar{\Omega}_{1}$ and $\partial \Omega_{1}$ is outer-minimising in $\bar{\Omega}_{2}$, then $\partial \Omega'\simeq {\mathbb S}^2$ is smooth and outer-minimising in $\bar{\Omega}_{2}$ as well. The monotonicity is then a direct consequence of the definition \eqref{eq:defmSH}.
\end{proof}

In order to prove (a suitable version of) property (iv), we first establish the next two results of independent interest.

\begin{prop} [$m_{\ADM}$ is an upper bound for $m_{SH}$]\label{prop:mSHleqmADM}
Let $(M,g)$ and $\Omega \subset M$ be as in Definition \ref{def:supHaw}. Assume in addition that $\partial \Omega$ is outer-minimising in $M$. Then $m_{SH}(\Omega)\leq m_{\ADM}(M,g)$.
\end{prop}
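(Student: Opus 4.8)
The key point is that the Inverse Mean Curvature Flow monotonicity estimate \eqref{eq:mADmgeqmH} gives, for every outer-minimising sphere $\Sigma \subset M$,
\begin{equation*}
m_H(\Sigma) \leq m_{\ADM}(M,g).
\end{equation*}
So the strategy is simply to test \eqref{eq:defmSH} against competitors $\Omega' \subset \bar\Omega$ and compare each admissible $m_H(\partial\Omega')$ with $m_{\ADM}(M,g)$ by reducing to \eqref{eq:mADmgeqmH}. The only subtlety is that in \eqref{eq:defmSH} we only know that $\partial\Omega'$ is outer-minimising \emph{in $\bar\Omega$}, whereas \eqref{eq:mADmgeqmH} requires outer-minimising \emph{in $M$}; this is where the extra hypothesis that $\partial\Omega$ is outer-minimising in $M$ enters.

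\textbf{Step 1: transitivity of the outer-minimising property.} First I would record the elementary fact that if $\partial\Omega'$ is outer-minimising in $\bar\Omega$ and $\partial\Omega$ is outer-minimising in $M$, then $\partial\Omega'$ is outer-minimising in $M$. Indeed, take any finite-perimeter, finite-volume $E \supset \Omega'$ in $M$. Then $E \cup \Omega \supset \Omega$, so $\rP(\Omega) \leq \rP(E\cup\Omega)$; moreover by the submodularity of the perimeter $\rP(E\cup\Omega) + \rP(E\cap\Omega) \leq \rP(E) + \rP(\Omega)$, hence $\rP(E\cap\Omega) \leq \rP(E)$. But $E\cap\Omega \supset \Omega'$ and $E\cap\Omega \subset \bar\Omega$ (up to the set of finite perimeter containing $\Omega'$ inside $\bar\Omega$), so the outer-minimising property of $\partial\Omega'$ in $\bar\Omega$ gives $\rP(\Omega') \leq \rP(E\cap\Omega) \leq \rP(E)$. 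Thus $\partial\Omega'$ is outer-minimising in $M$.

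\textbf{Step 2: apply the IMCF bound and take the supremum.} For every admissible competitor $\Omega'$ in \eqref{eq:defmSH}, Step 1 shows $\partial\Omega' \simeq {\mathbb S}^2$ is a smooth outer-minimising sphere in $M$, so \eqref{eq:mADmgeqmH} yields $m_H(\partial\Omega') \leq m_{\ADM}(M,g)$. Taking the supremum over all such $\Omega'$ gives $m_{SH}(\Omega) \leq m_{\ADM}(M,g)$, which is the claim.

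\textbf{Main obstacle.} There is no serious analytic obstacle here: the IMCF machinery is imported wholesale via \eqref{eq:mADmgeqmH}, and the proof is essentially bookkeeping with the definition of the $\sup$-Hawking mass plus the transitivity lemma of Step 1. The one place to be slightly careful is the measure-theoretic handling of $E\cap\Omega$ in Step 1 (choosing good representatives, noting $E\cap\Omega$ has finite volume and finite perimeter and contains $\Omega'$), and verifying that the topological/smoothness requirements on $\partial\Omega'$ built into \eqref{eq:defmSH} are exactly what \eqref{eq:mADmgeqmH} needs — in particular that $\partial\Omega'$ being smooth and outer-minimising suffices to run the weak IMCF of Huisken-Ilmanen and obtain \eqref{eq:mADmgeqmH}, which is precisely the content cited in the introduction.
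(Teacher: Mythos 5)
Your proposal is correct and follows essentially the same route as the paper: the paper's proof likewise asserts that the hypothesis that $\partial\Omega$ is outer-minimising in $M$ upgrades each competitor $\partial\Omega'$ from outer-minimising in $\bar\Omega$ to outer-minimising in $M$, then invokes the weak Inverse Mean Curvature Flow bound \eqref{eq:mADmgeqmH} and takes the supremum in \eqref{eq:defmSH}. The only difference is that you spell out the transitivity step via submodularity of the perimeter ($\rP(E\cup\Omega)+\rP(E\cap\Omega)\leq\rP(E)+\rP(\Omega)$), a detail the paper states without proof, and your argument for it is sound.
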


\begin{proof}
 Since $\partial \Omega$ is outer-minimising in $M$, every  $\Omega'\subset \bar{\Omega}$  with $\partial \Omega' \simeq {\mathbb S}^2$ smooth and outer-minimising in  $\bar{\Omega}$ is also outer-minimising in $M$. Thus, one can run the (weak version of the) Inverse Mean Curvature Flow by Huisken-Ilmanen \cite{HuiskenIlmanen} starting from $\Omega'$ and obtain that $m_{H}(\partial \Omega') \leq m_{\ADM}(M,g)$. The claim follows now from the very definition  \eqref{eq:defmSH} of $m_{SH}(\Omega)$.

\end{proof}

\begin{remark} [$m_{H}\leq m_{SH}\leq m_{B}$]\label{rem:mSHmB}
Since the upper bound in Proposition \ref{prop:mSHleqmADM} holds for any AF extension of $\Omega$, it follows that 
\begin{equation*}
m_{SH}(\Omega)\leq m_{B}(\Omega)\leq m_{\ADM}(M,g) \quad \text{for all } \Omega\subset M \text{ with $\partial \Omega$ outer-minimising},
\end{equation*}
where $m_{B}(\Omega)$ denotes the Bartnik mass of $\Omega$.
Thus, recalling \eqref{eq:mHleqmSH}, we obtain:
\begin{equation*}
m_{H}(\partial \Omega)\leq m_{SH}(\Omega)\leq m_{B}(\Omega)\leq m_{\ADM}(M,g), \quad \forall \Omega\subset M \text{ with $\partial \Omega\simeq {\mathbb S}^2$ smooth and outer-minimising}.
\end{equation*}
\end{remark}

\begin{lemma}[Existence of an exhaustion asymptotic to ADM mass] \label{lem:AsymptmSH}
Let $(M,g)$  be as in Definition \ref{def:supHaw}. Let $\Sigma_{\rho}$ be the coordinate sphere of radius $\rho \gg 1$ in an asymptotically flat coordinate chart and denote with $B_{\rho}$ the  bounded region enclosed by $\Sigma_{\rho}$. Then
$\lim_{\rho\to \infty} m_{SH} (B_{\rho})= m_{\ADM}(M,g)$. 
\end{lemma}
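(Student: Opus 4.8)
The plan is to bound $m_{SH}(B_\rho)$ from both sides by quantities converging to $m_{\ADM}(M,g)$ as $\rho\to\infty$. For the upper bound, I would use Proposition \ref{prop:mSHleqmADM}: provided $\Sigma_\rho=\partial B_\rho$ is outer-minimising in $M$ (which holds for all large $\rho$, since coordinate spheres of large radius in an AF end are outer-minimising — this is classical, see e.g.\ \cite{HuiskenIlmanen}, and can also be seen via the expansions of the mean curvature of coordinate spheres together with the isoperimetric inequality at large scales), we get $m_{SH}(B_\rho)\le m_{\ADM}(M,g)$ for all $\rho\gg 1$. Hence $\limsup_{\rho\to\infty} m_{SH}(B_\rho)\le m_{\ADM}(M,g)$.

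For the lower bound, the natural competitor in the supremum \eqref{eq:defmSH} defining $m_{SH}(B_\rho)$ is $\Omega'=B_{\rho'}$ for some $\rho'\le\rho$ with $\rho'\to\infty$; for such $\Omega'$ the boundary $\partial\Omega'=\Sigma_{\rho'}\simeq{\mathbb S}^2$ is smooth and, for $\rho'$ large, outer-minimising in $\bar B_\rho$ (again by the outer-minimising property of large coordinate spheres). By definition of $m_{SH}$ this gives $m_{SH}(B_\rho)\ge m_H(\Sigma_{\rho'})$ for every such $\rho'\le\rho$. Now I would invoke the well-known fact that the Hawking mass of large coordinate spheres converges to the ADM mass: $\lim_{\rho'\to\infty} m_H(\Sigma_{\rho'})=m_{\ADM}(M,g)$. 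This is a standard computation under the AF decay assumptions of Definition \ref{def:AFRiemMan} (with $\tau>1/2$ and integrable scalar curvature), carried out for instance in \cite{HuiskenIlmanen} or \cite{LeeBook}; the point is that $W(\Sigma_{\rho'})\to 16\pi$ at a rate such that $\sqrt{|\Sigma_{\rho'}|/(16\pi)^3}\,(16\pi-W(\Sigma_{\rho'}))\to m_{\ADM}$. Fixing $\varepsilon>0$, choose $\rho'_\varepsilon$ with $m_H(\Sigma_{\rho'_\varepsilon})\ge m_{\ADM}(M,g)-\varepsilon$; then for all $\rho\ge\rho'_\varepsilon$ (large enough that $\Sigma_{\rho'_\varepsilon}$ is outer-minimising in $\bar B_\rho$) we have $m_{SH}(B_\rho)\ge m_{\ADM}(M,g)-\varepsilon$, so $\liminf_{\rho\to\infty} m_{SH}(B_\rho)\ge m_{\ADM}(M,g)$.

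Combining the two bounds yields $\lim_{\rho\to\infty} m_{SH}(B_\rho)=m_{\ADM}(M,g)$, as claimed. The main obstacle — really the only non-formal input — is establishing that large coordinate spheres in an AF end are outer-minimising (needed both to apply Proposition \ref{prop:mSHleqmADM} and to produce admissible competitors in \eqref{eq:defmSH}); this requires combining the asymptotic expansion of the area and mean curvature of coordinate spheres with the Euclidean-type isoperimetric control at large scales (in the spirit of \cite{HuiskenIlmanen} and the references on isoperimetry used earlier in the paper, e.g.\ \cite{NardCV, FloresNardulli}), and is where the AF hypothesis is genuinely used. Once that is in hand, the rest is the standard $m_H(\Sigma_\rho)\to m_{\ADM}$ computation plus bookkeeping with $\varepsilon$.
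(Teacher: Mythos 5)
Your proof is correct and takes essentially the same approach as the paper: both directions hinge on (a) the outer-minimising property of large coordinate spheres and (b) the classical fact $m_H(\Sigma_\rho)\to m_{\ADM}$, with the upper bound via Proposition \ref{prop:mSHleqmADM} and the lower bound via the inequality $m_H(\partial\Omega)\le m_{SH}(\Omega)$. The only cosmetic difference is that your lower bound detours through competitors $\Sigma_{\rho'}$ with $\rho'\le\rho$ and a fixed-$\varepsilon$ argument, whereas the paper takes $\rho'=\rho$ directly (i.e.\ $m_{SH}(B_\rho)\ge m_H(\Sigma_\rho)$ via \eqref{eq:mHleqmSH}) and passes to the $\liminf$, which is slightly shorter but equivalent.
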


\begin{proof}
It is well known that for $\rho \gg 1$ sufficiently large, the coordinate sphere $\Sigma_{\rho}$ satisfies: 
\begin{itemize}
\item $\Sigma_{\rho}$ is outer-minimising in $M$ (a careful reader has probably noticed that this fact can also be proven by a blow-down argument analogous to the proof by blow-up of Theorem \ref{Prop:OuterMin});
\item $\lim_{\rho\to \infty} m_{H} (\Sigma_{\rho})= m_{\ADM}(M,g)$, see for instance \cite[Exercise 4.25]{LeeBook}.
\end{itemize}
The combination of the last property and \eqref{eq:mHleqmSH} gives on the one hand  that  
$$m_{\ADM}(M,g)= \lim_{\rho\to \infty} m_{H} (\Sigma_{\rho}) \leq \liminf_{\rho\to \infty} m_{SH} (B_{\rho}).$$
On the other hand, using Proposition \ref{prop:mSHleqmADM} and that   $\Sigma_{\rho}$ is outer-minimising, we infer that $m_{SH} (B_{\rho})\leq m_{\ADM}(M,g)$ for every $\rho \gg 1$. The conclusion follows.
\end{proof}

We can now prove the following (suitable version of) property (iv).

\begin{prop} [Validity of (iv)]
Let $(M,g)$ be as in Definition \ref{def:supHaw}.  If $\{\Omega_{k}\}_{k=1}^{\infty}$ is an exhaustion
of $M$ such that each $\partial \Omega_{k}$ is outer-minimising in $M$,  then $m_{SH}(\Omega_{k})\to m_{\ADM}(M,g)$ as $k\to \infty$. 
\end{prop}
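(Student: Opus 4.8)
The plan is to sandwich $m_{SH}(\Omega_k)$ between two quantities both converging to $m_{\ADM}(M,g)$, using the monotonicity of the $\sup$-Hawking mass together with the upper bound already established. First I would observe that since $\{\Omega_k\}$ is an exhaustion of $M$, for each fixed coordinate radius $\rho\gg 1$ the region $B_\rho$ enclosed by the coordinate sphere $\Sigma_\rho$ satisfies $B_\rho\subset \Omega_k$ for all $k$ sufficiently large (depending on $\rho$). Recalling from the proof of Lemma~\ref{lem:AsymptmSH} that $\Sigma_\rho=\partial B_\rho$ is outer-minimising in $M$ for $\rho\gg 1$, and that each $\partial\Omega_k$ is outer-minimising in $M$ by hypothesis, the monotonicity Proposition~\ref{prop:MonmSH} applies and gives $m_{SH}(B_\rho)\leq m_{SH}(\Omega_k)$ whenever $B_\rho\subset\Omega_k$ with $\rho\gg 1$. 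Hence, for each fixed such $\rho$,
\[
m_{SH}(B_\rho)\leq \liminf_{k\to\infty} m_{SH}(\Omega_k).
\]
Letting $\rho\to\infty$ and invoking Lemma~\ref{lem:AsymptmSH}, which asserts $\lim_{\rho\to\infty} m_{SH}(B_\rho)=m_{\ADM}(M,g)$, yields $\liminf_{k\to\infty} m_{SH}(\Omega_k)\geq m_{\ADM}(M,g)$.

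For the reverse inequality I would apply Proposition~\ref{prop:mSHleqmADM} directly to each $\Omega_k$: since $\partial\Omega_k$ is outer-minimising in $M$ by assumption, that proposition gives $m_{SH}(\Omega_k)\leq m_{\ADM}(M,g)$ for every $k$, and therefore $\limsup_{k\to\infty} m_{SH}(\Omega_k)\leq m_{\ADM}(M,g)$. Combining the two bounds gives $\lim_{k\to\infty} m_{SH}(\Omega_k)=m_{\ADM}(M,g)$, as desired.

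A small technical point to verify carefully is the hypothesis of Proposition~\ref{prop:MonmSH}: it requires $\partial\Omega'$ (here $\Sigma_\rho=\partial B_\rho$) to be outer-minimising in $\bar\Omega_k$, not merely in $M$. But being outer-minimising in $M$ implies being outer-minimising in any open subset containing it, since the competitor class only shrinks; so this is automatic once $\Sigma_\rho$ is outer-minimising in $M$, which holds for $\rho\gg 1$. One should also confirm that $B_\rho$ is eventually contained in $\Omega_k$ in the honest sense needed (so that $\Sigma_\rho\subset\bar\Omega_k$): this is immediate from the definition of exhaustion together with the fact that $\bar B_\rho$ is compact. I do not expect any genuine obstacle here — the statement is essentially a formal consequence of Propositions~\ref{prop:MonmSH} and \ref{prop:mSHleqmADM} and Lemma~\ref{lem:AsymptmSH} — so the ``hard part'' is merely bookkeeping the outer-minimising hypotheses correctly along the chain of inclusions $B_\rho\subset\Omega_k\subset M$.

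\begin{proof}
By Proposition~\ref{prop:mSHleqmADM} applied to each $\Omega_{k}$ (whose boundary is outer-minimising in $M$ by hypothesis), we have $m_{SH}(\Omega_{k})\leq m_{\ADM}(M,g)$ for every $k$, hence
\[
\limsup_{k\to\infty} m_{SH}(\Omega_{k})\leq m_{\ADM}(M,g).
\]
For the reverse inequality, fix $\rho\gg 1$ large enough that the coordinate sphere $\Sigma_{\rho}$ is outer-minimising in $M$ (such $\rho$ exist, as recalled in the proof of Lemma~\ref{lem:AsymptmSH}), and let $B_{\rho}$ be the bounded region it encloses. Since $\{\Omega_{k}\}$ is an exhaustion of $M$ and $\bar{B}_{\rho}$ is compact, there is $k_{0}=k_{0}(\rho)$ such that $B_{\rho}\subset\Omega_{k}$ for all $k\geq k_{0}$. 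Because $\Sigma_{\rho}=\partial B_{\rho}$ is outer-minimising in $M$, it is a fortiori outer-minimising in $\bar{\Omega}_{k}$; since moreover $\partial\Omega_{k}$ is outer-minimising in $M$, the monotonicity Proposition~\ref{prop:MonmSH} yields
\[
m_{SH}(B_{\rho})\leq m_{SH}(\Omega_{k}), \quad \forall k\geq k_{0}(\rho).
\]
Therefore $m_{SH}(B_{\rho})\leq \liminf_{k\to\infty} m_{SH}(\Omega_{k})$ for every such $\rho$, and letting $\rho\to\infty$ and applying Lemma~\ref{lem:AsymptmSH} gives
\[
m_{\ADM}(M,g)=\lim_{\rho\to\infty} m_{SH}(B_{\rho})\leq \liminf_{k\to\infty} m_{SH}(\Omega_{k}).
\]
Combining the two displayed bounds, $\lim_{k\to\infty} m_{SH}(\Omega_{k})= m_{\ADM}(M,g)$.
\end{proof}
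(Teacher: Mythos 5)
Your proposal is correct and follows essentially the same route as the paper: the upper bound comes from Proposition \ref{prop:mSHleqmADM} applied to each $\Omega_k$, and the lower bound from eventually enclosing the large coordinate balls $B_\rho$, using Proposition \ref{prop:MonmSH} together with Lemma \ref{lem:AsymptmSH}. Your explicit remark that $\Sigma_\rho$ being outer-minimising in $M$ implies it is outer-minimising in $\bar\Omega_k$ (since the competitor class only shrinks) is a correct spelling-out of a step the paper uses implicitly.
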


\begin{proof}
First, using Proposition \ref{prop:mSHleqmADM} and that   $\partial \Omega_{k}$ is outer-minimising, we infer that 
$$m_{SH} (\Omega_{k})\leq m_{\ADM}(M,g), \quad \forall k\in \N.$$
Also, using that $\{\Omega_{k}\}_{k=1}^{\infty}$ is an exhaustion of $M$, we have that for every $\rho\gg 1$ there exists $k_{0}>0$ such that 
$$
B_{\rho}\subset \Omega_{k}, \quad \forall  k\geq k_{0},
$$
where $B_{\rho}$ is as in Lemma \ref{lem:AsymptmSH}. The monotonicity property  of $m_{SH}$ (see Proposition \ref{prop:MonmSH}) and the fact that $\partial B_{\rho}=\Sigma_{\rho}$ is outer-minimising in $M$ (see the proof of Lemma \ref{lem:AsymptmSH})  yield that  for every $\rho\gg 1$ sufficiently large there exists $k_{0}>0$ such that:
$$m_{SH}(B_{\rho})\leq  m_{SH} (\Omega_{k}), \quad \forall  k\geq k_{0}. $$
Recalling Lemma \ref{lem:AsymptmSH}, we thus obtain
$$m_{\ADM} (M,g)= \lim_{\rho\to \infty}m_{SH}(B_{\rho})\leq \liminf_{k\to \infty} m_{SH} (\Omega_{k}).$$
The conclusion follows by the combination of the first and last displayed formulas in the proof.
\end{proof}

We next establish property (v).
\begin{prop} [Validity of (v)]\label{prop:(v)}
Let $m_{Sch}>0$ and consider $\R^{3}\setminus B^{\delta}_{2m_{Sch}}(0) \simeq {\mathbb S}^{2}\times [2m_{Sch}, \infty)$ endowed with the Schwarzshild metric $g^{m_{Sch}}$ of mass $m_{Sch}>0$ as in \eqref{eq:defgSch}.  Let  $\Omega\subset \R^{3}\setminus B^{\delta}_{2m_{Sch}}(0)$ be an open subset  such that $\partial \Omega$ is outer-minimising and $\Omega$ contains the horizon, i.e. $\{r=2 m_{Sch}\} \subset \Omega$. Then $m_{SH}(\Omega)= m_{Sch}$.  
\end{prop}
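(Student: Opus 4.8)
The plan is to show the two inequalities $m_{SH}(\Omega)\le m_{Sch}$ and $m_{SH}(\Omega)\ge m_{Sch}$ separately, exploiting the special structure of the Schwarzschild metric. For the upper bound, I would invoke Proposition \ref{prop:mSHleqmADM}: since $\partial\Omega$ is outer-minimising in $M=\R^{3}\setminus B^{\delta}_{2m_{Sch}}(0)$ (which is an AF manifold with horizon boundary $\{r=2m_{Sch}\}$, the horizon being minimal and outer-minimising with no other closed minimal surfaces), we get $m_{SH}(\Omega)\le m_{\ADM}(\R^{3}\setminus B^{\delta}_{2m_{Sch}}(0),g^{m_{Sch}})=m_{Sch}$, where the last equality is the classical computation of the ADM mass of Schwarzschild.

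For the lower bound, the key observation is that in Schwarzschild every centered coordinate sphere $\Sigma_r=\{|x|=r\}$, $r\ge 2m_{Sch}$, is a surface of constant mean curvature, is outer-minimising (since $g^{m_{Sch}}$ is rotationally symmetric and the area function $r\mapsto |\Sigma_r|=4\pi r^2$ is increasing), and satisfies $m_H(\Sigma_r)=m_{Sch}$ identically — this is the well-known fact that the Hawking mass is constant and equal to $m_{Sch}$ on the centered spheres of Schwarzschild (it follows directly from $W(\Sigma_r)=\int_{\Sigma_r}H^2\,dV$ computed with $H = \tfrac{2}{r}\sqrt{1-\tfrac{2m_{Sch}}{r}}$, giving $16\pi - W(\Sigma_r)=16\pi\cdot\tfrac{2m_{Sch}}{r}$, hence $m_H(\Sigma_r)=\sqrt{\tfrac{4\pi r^2}{(16\pi)^3}}\cdot 16\pi\cdot\tfrac{2m_{Sch}}{r}=m_{Sch}$). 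Now, since $\Omega$ is open and contains the horizon $\{r=2m_{Sch}\}$, it contains $\Sigma_r$ for all $r$ in some interval $[2m_{Sch},2m_{Sch}+\varepsilon)$; more importantly, I should exhibit one admissible competitor $\Omega'\subset\bar\Omega$ with $\partial\Omega'\simeq{\mathbb S}^2$ smooth and outer-minimising in $\bar\Omega$ for which $m_H(\partial\Omega')=m_{Sch}$. The natural choice is $\Omega'$ the region between the horizon and a coordinate sphere $\Sigma_r\subset\Omega$ (so $\partial\Omega'=\Sigma_r$, as the horizon does not contribute to being a topological sphere boundary in the relevant sense — or, if one prefers a genuine ${\mathbb S}^2$ boundary, take $r$ slightly larger than $2m_{Sch}$ so that $\Sigma_r$ bounds together with the horizon, and recall that outer-minimising is checked against sets containing $\Omega'$). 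Then $\partial\Omega'=\Sigma_r$ is outer-minimising in $\bar\Omega$ because it is outer-minimising in all of $M$, and plugging into \eqref{eq:defmSH} gives $m_{SH}(\Omega)\ge m_H(\Sigma_r)=m_{Sch}$.

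Combining the two inequalities yields $m_{SH}(\Omega)=m_{Sch}$, as claimed.

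The main obstacle I anticipate is the bookkeeping around the horizon: the competitor sets $\Omega'$ in the definition \eqref{eq:defmSH} are required to have $\partial\Omega'\simeq{\mathbb S}^2$ smooth, so one must be slightly careful about whether the horizon boundary component is included in $\partial\Omega'$ or whether $\Omega'$ is taken to be a region whose outer boundary alone is a coordinate sphere. The cleanest resolution is to note that, since $\Omega$ contains a neighbourhood of the horizon, one can pick $r$ with $2m_{Sch}<r$ and $\Sigma_r\subset\Omega$, let $\Omega'$ be the bounded region in $M$ with $\partial\Omega'=\Sigma_r$ (i.e. the region between horizon and $\Sigma_r$, whose topological boundary relevant for the perimeter/outer-minimising notion is $\Sigma_r$ since $\partial M$ never contributes, cf. the convention after \eqref{eq:defPer}), and observe $\Sigma_r$ is outer-minimising in $M$ by rotational symmetry, hence outer-minimising in $\bar\Omega$. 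One then has to double-check that the Hawking mass used in \eqref{eq:defmSH} is the one attached to the surface $\Sigma_r$ and equals $m_{Sch}$ by the computation above — a routine but essential verification. A secondary (again routine) point is confirming the ADM mass of this Schwarzschild exterior is $m_{Sch}$, which is standard.
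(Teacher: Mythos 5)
Your proof is correct and takes essentially the same route as the paper: the upper bound is exactly the paper's (Proposition \ref{prop:mSHleqmADM} together with $m_{\ADM}(g^{m_{Sch}})=m_{Sch}$), and the lower bound is obtained, as in the paper, by exhibiting one admissible outer-minimising competitor in \eqref{eq:defmSH} whose Hawking mass equals $m_{Sch}$. The only difference is the choice of competitor: the paper uses the horizon itself (a minimal surface, so $m_H=\sqrt{|\Sigma|/16\pi}=m_{Sch}$), while you use a nearby coordinate sphere $\Sigma_r$, $r>2m_{Sch}$, with $m_H(\Sigma_r)\equiv m_{Sch}$ — a harmless variation that in fact sidesteps the horizon bookkeeping you flag.
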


\begin{proof}
Since $m_{\ADM}(g^{m_{Sch}})= m_{Sch}$, the upper bound given in Proposition \ref{prop:mSHleqmADM} 
 yields that 
$$m_{SH}(\Omega)\leq m_{Sch}.$$
To obtain the reversed inequality, observe that the horizon $\Sigma_{2 m_{Sch}}:=\{r=2 m_{Sch}\}$ is outer-minimising and satisfies that $m_{H}(\Sigma_{2 m_{Sch}})= m_{Sch}$. The  definition \eqref{eq:defmSH} of $m_{SH}$ yields:
$$
m_{Sch}=m_{H}(\Sigma_{2 m_{Sch}})\leq m_{SH}(\Omega).
$$
The conclusion follows by combining the two inequalities displayed in the proof.
\end{proof}

We wish to conclude this appendix by suggesting a possible research direction, drawing a more precise connection between the sup-Hawking mass and the Bartnik mass. 
\\Notice indeed that the sup-Hawking mass corresponds to ``defining a quasi-local mass by taking the supremum from inside'' while the Bartnik mass corresponds to ``defining a quasi-local mass by taking the infimum from outside''.  In Remark \ref{rem:mSHmB}, we observed that for every $\Omega\subset M$ with $\partial \Omega$ outer-minimising it holds that $m_{SH}(\Omega)\leq m_{B}(\Omega)$, however it is natural to expect that the two objects coincide (under appropriate conditions on $\Omega$ and/or  $M$): 
\\

\textbf{Open Problem}: find appropriate (necessary and/or sufficient) conditions  on $\Omega$ and/or  $M$ so that 
$$m_{SH}(\Omega)= m_{B}(\Omega).$$

 \begin{remark}[Extension to the case of non-null cosmological constant]
By using the generalized Hawking mass \eqref{eq:defGenHaw}, one can extend the definition \eqref{eq:defmSH} of $\sup$-Hawking mass to the case when $(M^3,g)$ has scalar curvature bounded below by $6K$, with $K\in \{-1,0,1\}$.
Some of the good properties above would be  retained:
\begin{enumerate}
\item [{\rm (i)}] is of course satisfied;
\item [{\rm (ii)}] is satisfied in an analogous form as in Proposition \ref{prop:QLM(ii)}: Let $\Omega\subset M$ be a bounded open set, then  $ m_{SH}(\Omega)\geq 0 $ with equality if and only if $\Omega\setminus \partial M$ is locally isometric to the space form of constant sectional curvature $K$. 
\\This follows by localizing the proof of Theorem \ref{thm:9} (analogous to the proof of the quasi-local rigidity Theorem   \ref{prop:Riem0}) and by extending the outer-minimizing property of $S_{p,\rho}(w)$ in $\Omega$ (see the proof of Theorem \ref{Prop:OuterMin}) to the case $\Sc \geq - 6K$ and bounded $\Omega$;
\item [{\rm (iii)}] is satisfied in the same form as in Proposition  \ref{prop:MonmSH}.
\end{enumerate}
However, property {\rm (iv)}, the upper bound analogous to Proposition \ref{prop:mSHleqmADM}, and the analogue of Proposition \ref{prop:(v)} have all little chances to hold for $K\neq 0$  (see Remark \ref{rem:HMHyp} for more details). This is why the presentation here is focused on the $AF$ case with non-negative scalar curvature.  
\end{remark}

%

\subsection{Local area-constrained maximisers of $m_{H}$ are perturbed geodesic spheres}
In this paper, we often estimated the $\sup$ of the Hawking mass using perturbed geodesic spheres $S_{p.\rho}(w)$. Even if not strictly necessary for such arguments, in the next proposition we prove that such a choice of competitors is very natural. Indeed we show that optimal competitors  for  the supremum  of the Hawking mass  among surfaces contained in a small ball  are given by perturbed geodesic spheres $S_{p,\rho}(w)$ with $w\in C^{4,\alpha}({\mathbb S}^{2})^{\perp}$, $\|w\|_{C^{4, \alpha}}\leq C \rho^{2}$. Thus, such optimal competitors satisfy the expansions given in  Lemma \ref{lem:estw} and Proposition \ref{prop:expHawk}.   For related results in this direction, see  Lamm-Metzger \cite{LammMetzger2013}, who proved $W^{2,2}$-closeness to a geodesic sphere under a small energy assumption, and  Laurain-Mondino \cite{MondinoLaurain}, who proved smooth convergence to  a geodesic sphere under a milder energy assumption. 
\\Recall that, by definition 
$$ C^{4,\alpha}({\mathbb S}^{2})^{\perp}:=  C^{4,\alpha}({\mathbb S}^{2}) \cap \big[ {\rm Ker}\big( \Delta_{{\mathbb S}^{2}} (\Delta_{{\mathbb S}^{2}}+2 ) \big) \big]^{\perp},$$
 where $\big[ {\rm Ker}\big( \Delta_{{\mathbb S}^{2}} (\Delta_{{\mathbb S}^{2}}+2 ) \big) \big]^{\perp}\subset L^{2}({\mathbb S}^{2}) $ denotes the $L^{2}$-orthogonal space to the finite (actually four) dimensional space ${\rm Ker}\big( \Delta_{{\mathbb S}^{2}} (\Delta_{{\mathbb S}^{2}}+2 ) \big)$.
 
\begin{prop}\label{prop:SigmaSprhow}
Let $(M,g)$ be a three-dimensional Riemannian manifold and let $\Sigma_{j}\subset M$ be a sequence of maximisers of $m_{H}$ under area constraint and Hausdorff converging to a point $\bar p\in M$. Then $\nabla \Sc(\bar p)=0$ and there exist $p_{j}\to \bar{p}, \rho_{j}\downarrow 0, w_{j}\in C^{4,\alpha}({\mathbb S}^{2})^{\perp}$ with  $\limsup_{j\to \infty}\rho_{j}^{-2} \|w_{j}\|_{C^{4,\alpha}({\mathbb S}^{2})} <\infty$  such that, up to a subsequence,  $\Sigma_{j}=S_{p_{j}, \rho_{j}}(w_{j})$.
\end{prop}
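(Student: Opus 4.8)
The plan is to combine the known $\varepsilon$-regularity and compactness theory for area-constrained Willmore surfaces (Lamm--Metzger \cite{LammMetzger2013}, Laurain--Mondino \cite{MondinoLaurain}) with the uniqueness statement of Lemma~\ref{lem:estw} and the first-order criticality with respect to the \emph{center}. First I would record that each $\Sigma_j$, being a maximiser of $m_H$ under an area constraint, is in particular a critical point of the Willmore functional under area constraint, hence solves the Euler--Lagrange equation \eqref{eq:10} with some Lagrange multiplier $\lambda_j$. Writing $\rho_j := \sqrt{|\Sigma_j|/(4\pi)}$ (so that $\rho_j\downarrow 0$ by the Hausdorff convergence to a point), I would rescale the ambient metric to $g_{\rho_j}=\rho_j^{-2}g$, under which $\widetilde\Sigma_j$ has area of order $1$. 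Since $\Sigma_j$ maximises the Hawking mass, $W(\Sigma_j)\le W(\text{comparison round sphere})=16\pi+\mathcal O(\rho_j^2)$, so the scaled surfaces have Willmore energy below $8\pi$ (indeed converging to $4\pi$); this is the small-energy regime in which the Lamm--Metzger / Laurain--Mondino theory applies and yields, after the rescaling, smooth convergence of $\widetilde\Sigma_j$ (up to translating the ambient normal chart) to a \emph{round unit sphere} in $\mathbb R^3$, centered at some point which, back in $M$, is a sequence $p_j\to\bar p$.

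The next step is to upgrade this smooth convergence to the graph representation $\Sigma_j=S_{p_j,\rho_j}(w_j)$ with a \emph{quantitative} bound $\|w_j\|_{C^{4,\alpha}(\mathbb S^2)}\le C\rho_j^2$. Once the scaled surface is a small $C^{4,\alpha}$ graph over a geodesic sphere of radius $\rho_j$ centered at $p_j$, I can decompose the graph function into its component in $\mathrm{Ker}[\Delta_{\mathbb S^2}(\Delta_{\mathbb S^2}+2)]$ and its orthogonal complement. The kernel is spanned by the constant and the three first spherical harmonics $x^\mu$; the constant mode can be absorbed into the radius $\rho_j$ (reparametrising), and the three linear modes correspond exactly to translating the center $p_j$. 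Thus, by choosing $\rho_j$ and $p_j$ appropriately, I may assume $w_j\in C^{4,\alpha}(\mathbb S^2)^\perp$. For this gauge-fixing I would invoke the implicit function theorem exactly as in the construction underlying Lemma~\ref{lem:estw}: the map sending a perturbed geodesic sphere to the projection of its parametrisation onto the kernel is a submersion in $(\rho,p)$ for $\rho$ small, so the center and radius can be uniquely adjusted. Having fixed the gauge, $w_j\in C^{4,\alpha}(\mathbb S^2)^\perp$ and $\|w_j\|_{C^{4,\alpha}}\to 0$; then by the uniqueness part of Lemma~\ref{lem:estw} (the contraction argument giving a \emph{unique} small area-constrained Willmore graph over the geodesic sphere at $p_j$ of radius $\rho_j$) we must have $w_j=w_{p_j,\rho_j}$, and hence the expansion \eqref{eq:estw} forces $\|w_j\|_{C^{4,\alpha}(\mathbb S^2)}\le C\rho_j^2$.

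Finally, the condition $\nabla\Sc(\bar p)=0$ comes from analysing criticality with respect to the center of the surface. Having the expansion \eqref{eq:mHSprhow} for $m_H(S_{p,\rho}(w_{p,\rho}))$, the leading term is $\tfrac{1}{12}\Sc_p\rho^3$. If $\Sigma_j$ maximises $m_H$ among all surfaces of the same area, then in particular, for $\rho=\rho_j$ fixed, the center $p_j$ must maximise (to leading order, and within the $o(\rho_j^3)$ accuracy of the expansion) the map $p\mapsto m_H(S_{p,\rho_j}(w_{p,\rho_j}))=\tfrac{1}{12}\Sc_p\rho_j^3+\mathcal O(\rho_j^5)$ over a neighbourhood of $\bar p$; comparing $m_H(\Sigma_j)$ with $m_H(S_{q,\rho_j}(w_{q,\rho_j}))$ for nearby $q$ and passing to the limit $j\to\infty$ gives that $\bar p$ is a critical point of $\Sc$, i.e.\ $\nabla\Sc(\bar p)=0$. (This is consistent with, and indeed a weak form of, the known necessary condition for the existence of area-constrained Willmore spheres concentrating at a point, cf.\ \cite{LammMetzger2013, MondinoLaurain}.)

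\textbf{Main obstacle.} The delicate point is the first step: extracting smooth (not merely $W^{2,2}$ or varifold) convergence of the rescaled maximisers to a round sphere, and in particular ensuring there is no loss of area or energy concentration at small scales (no bubbling, no degeneration of the conformal structure). This is exactly what the $\varepsilon$-regularity theory of Lamm--Metzger and the improved convergence of Laurain--Mondino are designed to provide, but one must check that the hypotheses of those results — an energy bound strictly below $8\pi$ together with an area (or diameter) control — are genuinely met here; the Willmore energy bound $W(\Sigma_j)\le 16\pi+\mathcal O(\rho_j^2)$ coming from the maximality of $m_H$ is what makes this work, and articulating that inequality carefully (including that competitor round spheres of the prescribed area exist inside $M$ for $j$ large) is the crux of the argument. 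The gauge-fixing by the implicit function theorem and the invocation of uniqueness from Lemma~\ref{lem:estw} are then comparatively routine.
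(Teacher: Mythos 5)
Your proposal follows the same skeleton as the paper's proof — energy control from maximality, the Lamm--Metzger / Laurain--Mondino convergence theory to extract smooth convergence to a round sphere, a gauge-fixing step to land in $C^{4,\alpha}({\mathbb S}^2)^\perp$, and an appeal to the uniqueness statement of Lemma~\ref{lem:estw} to pin down $w_j=w_{p_j,\rho_j}$ and get the $\rho_j^2$ bound. Two places where your route diverges from the paper's are worth flagging.

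First, the topology of $\Sigma_j$. Before invoking \cite[Corollary 1.3]{MondinoLaurain} one must know the $\Sigma_j$ are topological spheres. The paper makes this explicit: it uses the Li--Yau inequality (non-orientable closed surfaces in $\R^3$ can only be immersed, hence $W\ge 32\pi$) and the Marques--Neves resolution of the Willmore conjecture ($W\ge 8\pi^2$ for genus $\ge 1$), transferred to $M$ near a point via normal coordinates, and compares the resulting upper bound on $m_H(\Sigma_j)/\sqrt{|\Sigma_j|}$ with the value $0$ attained by small geodesic spheres. You instead derive the single estimate $W(\Sigma_j)\le 16\pi+\mathcal O(\rho_j^2)$ by comparison with perturbed geodesic spheres of matching area. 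Since $16\pi$ is strictly below both the Li--Yau and the Marques--Neves thresholds, your bound implicitly delivers the same topological conclusion — but it is worth saying so, because otherwise the reader may worry you are applying the small-energy compactness theory to a torus.

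Second, the derivation of $\nabla\Sc(\bar p)=0$. The paper obtains this for free as part of the conclusion of \cite[Corollary 1.3]{MondinoLaurain}, which records the known necessary condition for area-constrained Willmore spheres to concentrate at a point. You propose a self-contained derivation from the expansion \eqref{eq:mHSprhow} and maximality: comparing $m_H(\Sigma_j)$ with $m_H(S_{q,\rho}(w_{q,\rho}))$ at nearby centers $q$ of matching area gives $\Sc_{p_j}\ge \Sc_q+\mathcal O(\rho_j^2)$, hence $\bar p$ is a local \emph{maximum} of $\Sc$. This is a genuinely more elementary route and it actually proves slightly more than criticality. The technical point you should acknowledge is that the competitors $S_{q,\rho}(w_{q,\rho})$ do not automatically have area $|\Sigma_j|$; the radius must be adjusted $q$-dependently, which by \eqref{eq:AreaSprhow} changes $\rho$ only at order $\rho^3$ and is therefore harmless at the leading $\rho^3$ order — but the bookkeeping should appear, especially since the proposition is placed in the appendix precisely to justify the choice of competitors used elsewhere in the paper.
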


\begin{proof}
First of all, recall that non-orientable closed two-dimensional surfaces cannot be embedded in $\R^{3}$, but only immersed (i.e. with self-intersections). Thus the Li-Yau inequality \cite{LiYau} implies that
\begin{equation}\label{eq:NonOrientable}
\inf\{ W(\Sigma): \Sigma \subset \R^{3} \text{closed non-orientable surface } \}\geq 32 \pi>16 \pi.
\end{equation}
Moreover, from the proof of the Willmore conjecture by Marques-Neves \cite{MarquesNeves} we know that
\begin{equation}\label{eq:WillConj}
\inf\{W(\Sigma): \Sigma \subset \R^{3} \text{closed surface with genus}(\Sigma)\geq 1  \}\geq 8\pi^{2}>16 \pi.
\end{equation}
Using normal coordinates centred at $\bar{p}$ and estimating the difference between the Riemannian and Euclidean geometric quantities (see for instance \cite{MondinoSchygulla}), it is not hard to check that  \eqref{eq:NonOrientable} and \eqref{eq:WillConj} yield the existence of $C>0$ such that for any sequence  $\Sigma_j\subset M$ Hausdorff converging to a point $p$, with $\Sigma_j$ either non-orientable or of genus at least one,  it holds:
\begin{equation}\label{eq:SigmajHGNO}
\limsup_{j\to \infty} \frac{m_H(\Sigma_j)}{ \sqrt{|\Sigma_j|}}\leq -C.
\end{equation}
On the other hand, it is easy to see that for geodesic spheres $S_{p,\rho}$ it holds
\begin{equation}\label{eq:Spro0limsup}
\limsup_{\rho\to 0} \frac{m_H(S_{p,\rho})}{ \sqrt{|S_{p,\rho}|}}=0.
\end{equation}
Thus, the assumptions that $\Sigma_{j}\subset M$ are a sequence of maximisers of $m_{H}$ under area constraint and Hausdorff converging to a point $\bar p\in M$ yield that $\Sigma_j$ must be a topological sphere, for $j$ large enough, with  $W(\Sigma_j)<8 \pi$. 
Now we can apply \cite[Corollary 1.3]{MondinoLaurain} (see also \cite{LammMetzger2010}) to infer that  $\nabla \Sc(\bar p)=0$ and that, if we rescale $(M,g)$ around $\bar p$ in such a way that the rescaled surfaces $\tilde{\Sigma}_{j}$ have fixed area 1, then $\tilde{\Sigma}_{j}$ converge smoothly (up to a subsequence) to a round sphere in the three-dimensional Euclidean space. In particular $\Sigma_j$ is a normal graph over a geodesic sphere, of small radius and graph function (in any $C^k$ norm). Now, by  a contraction mapping argument one can find  
$$p_{j}\to \bar{p}, \;\rho_{j}\downarrow 0, \;w_{j}\in C^{4,\alpha}({\mathbb S}^{2})^{\perp} \text{ with } \limsup_{j\to \infty}\rho_{j}^{-2} \|w_{j}\|_{C^{4,\alpha}({\mathbb S}^{2})} <\infty$$
   such that  $\Sigma_{j}=S_{p_{j}, \rho_{j}}(w_{j})$.  The proof of this last claim can be performed along the lines as \cite[Lemma 5.3]{Mondino2}:  although the statement of \cite[Lemma 5.3]{Mondino2}  is  for critical points of $W$, the same proof holds verbatim  more generally for area-constrained critical points using that the Lagrange multipliers are bounded, thanks to \cite[Lemma 2.2]{MondinoLaurain}.

\end{proof}

\subsection{Second fundamental form on a perturbed geodesic sphere}\label{SS:AppSecFF}
In this section we give a self-contained proof of the Taylor expansion \eqref{eq:Exphij}  for the second fundamental form on a perturbed geodesic sphere. We follow the approach in  \cite{Pacard}, however we
compute more terms, since they are needed for the main results in this paper.
\\First of all let $Z_{i}$, $i=1,2$, be  the coordinate vector fields on $S_{p,\rho}(w)$:
\begin{equation*}
    Z_i = \exp_*{(\partial_{\theta^i}\rho(1 - w)\Theta)} = \rho((1 - w)\Theta_i - w_i\Theta).
\end{equation*}
In order to find an expression for the inward pointing unit normal vector to $S_{p,\rho}(w)$, consider:
\begin{align*}
    \Tilde{N} := -\Theta + a^jZ_j
\end{align*}
where  $a^j$ are such that $\Tilde{N}$ is orthogonal to both $Z_1$ and $Z_2$. Computing, we get:
 \begin{align*}
     g(\Tilde{N}, Z_i) &= g(-\Theta + a^jZ_j, Z_i) = - g(\Theta, Z_i) + a^jg(Z_j, Z_i) = - g(\Theta, \rho((1 - w)\Theta_i - w_i\Theta)) + a^j\mathring{g}_{ij} \\ &= \rho w_i + a^j\mathring{g}_{ij}, 
 \end{align*}
where we used that $g(\Theta, \Theta) = 1$ and $g(\Theta, \Theta_i) = 0$. Therefore, to satisfy orthogonality, we need to choose $a^j$ such that $a^j\mathring{g}_{ij} = - w_i\rho$, or $a^j = - \mathring{g}^{ij}w_i\rho$. In order to find the normalization constant, compute:
\begin{align*}
    g(\Tilde{N}, \Tilde{N}) &= g(-\Theta + a^jZ_j, -\Theta + a^iZ_i) \\ &= g(\Theta, \Theta) - g(\Theta, a^iZ_i) - g(\Theta, a^jZ_j) + g(a^jZ_j, a^iZ_i) \\ &= 1 - a^ig(\Theta, \rho((1 - w)\Theta_i - w_i\Theta)) - a^jg(\Theta, \rho((1 - w)\Theta_j - w_j\Theta)) + a^ia^j\mathring{g}_{ij} \\ &= 1 + a^iw_i\rho + a^jw_j\rho + a^ia^j\mathring{g}_{ij} \\ &= 1 - \mathring{g}^{ij}w_iw_j\rho^2.
\end{align*}
Combining the last two relations, we get the (inward) unit normal vector:
\begin{align*}
    N = (1 - \mathring{g}^{kl}w_kw_l\rho^2)^{-\frac{1}{2}}\Big(- \Theta - \mathring{g}^{ij}w_i((1 - w)\Theta_j - w_j\Theta)\rho^2\Big).
\end{align*}
Using the Taylor expansion around $0$ of $(1-x)^{-\frac{1}{2}}$ with $x = \mathring{g}^{ij}w_iw_j\rho^2$, we  get:
\begin{align}\label{eq:48}
    g(\Tilde{N}, \Tilde{N})^{-\frac{1}{2}} =1 + \frac{1}{2}\mathring{g}^{ij}w_iw_j\rho^2+\cQ^{(4)(1)}_{p}(w).
\end{align}
%
Combining with \eqref{eq:ExpgijInv}, we obtain:
\begin{equation*}
    |N + \Theta + g_{{\mathbb S}^2}^{ij}w_i\Theta_j | =  \rho^2 \cL_{p}^{(1)}(w) + \cQ_{p}^{(2)(1)}(w).
\end{equation*}
We next compute $\Tilde{h}_{ij} := -g(\nabla_{Z_i}\Tilde{N},Z_j)$ as a first step for obtaining $h_{ij}$. We have:
\begin{align}\label{eq:37}
    \Tilde{h}_{ij} &= -g(\nabla_{Z_i}(-\Theta + a^kZ_k),Z_j) \nonumber\\ &= g(\nabla_{Z_i}\Theta,Z_j) - g(\nabla_{Z_i}a^kZ_k,Z_j) \nonumber\\ &= \frac{w_i}{1-w}g(\Theta,Z_j) - \frac{w_i}{1-w}g(\Theta,Z_j) + g(\nabla_{Z_i}\Theta,Z_j) - g(\nabla_{Z_i}a^kZ_k,Z_j) \nonumber\\ 
    &= \frac{w_i}{1-w}g(\Theta,Z_j) + \frac{1}{1-w}[(1-w)g(\nabla_{Z_i}\Theta,Z_j) - w_ig(\Theta,Z_j)] - g(\nabla_{Z_i}a^kZ_k,Z_j) \nonumber\\ 
    &= \frac{w_i}{1-w}g(\Theta,Z_j) + \frac{1}{1-w}g((1-w)\nabla_{Z_i}\Theta - w_i\Theta,Z_j) - g(\nabla_{Z_i}a^kZ_k,Z_j) \nonumber\\ 
    &= \frac{w_i}{1-w}g(\Theta,Z_j) + \frac{1}{1-w}g(\nabla_{Z_i}((1-w)\Theta),Z_j) - g(\nabla_{Z_i}a^kZ_k,Z_j).
\end{align}
We compute the three terms in (\ref{eq:37}) seperately. For the first one, we use the definition of $Z_i$ and the fact that $g(\Theta,\Theta) = 1$ and $g(\Theta,\Theta_i) = 0$ to obtain:
\begin{align}\label{eq:38}
    \frac{w_i}{1-w}g(\Theta,Z_j) = \frac{w_i}{1-w}g(\Theta,\rho((1-w)\Theta_j - w_j\Theta)) = -\frac{w_iw_j\rho}{1-w}.
\end{align}
Now consider $\rho$ as a variable, giving:
\begin{equation*}
    Z_0 := \exp_*(\partial_{\rho}(\rho(1-w)\Theta)) = (1-w)\Theta.
\end{equation*}
Since
\begin{align*}
    g(\nabla_{Z_i}((1-w)\Theta),Z_j) &= Z_i(g((1-w)\Theta,Z_j)) - g((1-w)\Theta,\nabla_{Z_i}Z_j) \\&= Z_i(g((1-w)\Theta,\rho((1-w)\Theta_j - w_j\Theta))) - g((1-w)\Theta,\nabla_{Z_i}Z_j) \\&= Z_i(\rho(w-1)w_j) - g((1-w)\Theta,\nabla_{Z_i}Z_j) \\&= \rho(w_iw_j + ww_{ji} - w_{ji}) - g((1-w)\Theta,\nabla_{Z_i}Z_j)
\end{align*}
is symmetric in $i$ and $j$, we have:
\begin{align*}
    g(\nabla_{Z_i}((1-w)\Theta),Z_j) = g(\nabla_{Z_j}((1-w)\Theta),Z_i).
\end{align*}
Thus we compute:
\begin{align}\label{eq:39}
    g(\nabla_{Z_i}((1-w)\Theta),Z_j) \nonumber&= \frac{1}{2}(g(\nabla_{Z_i}((1-w)\Theta),Z_j) +g(\nabla_{Z_j}((1-w)\Theta),Z_i)) \nonumber\\ &= \frac{1}{2}(g(\nabla_{Z_i}Z_0,Z_j) + g(\nabla_{Z_j}Z_0,Z_i)) = \frac{1}{2}(g(\nabla_{Z_0}Z_i,Z_j) + g(\nabla_{Z_0}Z_j,Z_i)) \nonumber\\ &= \frac{1}{2} Z_0(g(Z_i,Z_j)) = \frac{1}{2} \partial_{\rho}\mathring{g}_{ij},
\end{align}
which sorts out the second term in (\ref{eq:37}). The final term becomes:
\begin{align}\label{eq:40}
    g(\nabla_{Z_i}a^kZ_k,Z_j) \nonumber &= Z_i(a^kg(Z_k,Z_j)) - a^kg(Z_k,\nabla_{Z_i}Z_j) \nonumber\\ &= Z_i(-\mathring{g}^{lk}w_l\rho\mathring{g}_{kj}) + \mathring{g}^{lk}w_l\rho\mathring{\Gamma}^m_{ij}\mathring{g}_{km} \nonumber\\ &= -w_{ji}\rho + w_l\mathring{\Gamma}^l_{ij}\rho \nonumber\\ &= -(\Hess_{\mathring{g}}w)_{ij}\rho.
\end{align}
Substituting (\ref{eq:38}), (\ref{eq:39}) and (\ref{eq:40}) into (\ref{eq:37}) gives:
\begin{align}\label{eq:41}
    \Tilde{h}_{ij} = -\frac{w_iw_j\rho}{1-w} + \frac{1}{2(1-w)}\partial_{\rho}\mathring{g}_{ij} + (\Hess_{\mathring{g}}w)_{ij}\rho.
\end{align}
To further expand $\Tilde{h}_{ij}$, we  combine the first two terms of (\ref{eq:41}). Differentiating (\ref{eq:Expgij}) with respect to $\rho$ gives:
\begin{align*}
    \partial_{\rho}\mathring{g}_{ij} &= 2g_{ij}^{{\mathbb S}^2}(1 - w)^2\rho + 2w_iw_j\rho + \frac{4}{3}g({\mathcal R}(\Theta,\Theta_i)\Theta,\Theta_j)(1 - w)^4\rho^3 \\ &\quad+ \frac{5}{6}g(\nabla_{\Theta}{\mathcal R}(\Theta,\Theta_i)\Theta,\Theta_j)(1 - w)^5\rho^4 + \frac{3}{10}g(\nabla_{\Theta \Theta}^2{\mathcal R}(\Theta,\Theta_i)\Theta,\Theta_j)(1 - w)^6\rho^5 \\ &\quad+ \frac{4}{15} \delta^{\mu\nu}g({\mathcal R}(\Theta,\Theta_i)\Theta,E_{\mu})g({\mathcal R}(\Theta,\Theta_j)\Theta,E_{\nu})(1 - w)^6\rho^5 + \cO_{p}(\rho^6) + \rho^6 \cL_{p}^{(0)}(w) + \rho^6 \cQ^{(2)(0)}_{p}(w).
\end{align*}
Hence, the first term in (\ref{eq:41}) cancels the second term of $\partial_{\rho}\mathring{g}_{ij}$ after it is multiplied by $\frac{1}{2(1-w)}$. This leaves:
\begin{align}\label{eq:43}
    \Tilde{h}_{ij} &= g_{ij}^{{\mathbb S}^2}(1 - w)\rho + (\Hess_{\mathring{g}}w)_{ij}\rho + \frac{2}{3}g({\mathcal R}(\Theta,\Theta_i)\Theta,\Theta_j)(1 - w)^3\rho^3 \nonumber\\ &\quad+ \frac{5}{12}g(\nabla_{\Theta}{\mathcal R}(\Theta,\Theta_i)\Theta,\Theta_j)(1 - w)^4\rho^4 + \frac{3}{20}g(\nabla_{\Theta\Theta}^2{\mathcal R}(\Theta,\Theta_i)\Theta,\Theta_j)(1 - w)^5\rho^5 \\ &\quad+ \frac{2}{15}\delta^{\mu\nu}g({\mathcal R}(\Theta,\Theta_i)\Theta,E_{\mu})g({\mathcal R}(\Theta,\Theta_j)\Theta,E_{\nu})(1 - w)^5\rho^5  + \cO_{p}(\rho^6) + \rho^6 \cL_{p}^{(0)}(w) + \rho^6 \cQ^{(2)(0)}_{p}(w). \nonumber
\end{align}
We finally compute $(\Hess_{\mathring{g}}w)_{ij}$. By definition we have:
\begin{equation}\label{eq:46}
    (\Hess_{\mathring{g}}w)_{ij} = w_{ij} - \mathring{\Gamma}^k_{ij}w_k = w_{ij} - \left[\frac{1}{2}\mathring{g}^{kl}(\partial_i\mathring{g}_{jl} + \partial_j\mathring{g}_{il} - \partial_l\mathring{g}_{ij})\right]w_k.
\end{equation}
Differentiating (\ref{eq:Expgij}) term by term shows:
\begin{align}\label{eq:44}
    \partial_i\mathring{g}_{jl} &= \partial_i(g_{jl}^{{\mathbb S}^2})(1 - 2w) \rho^2 - 2g_{jl}^{{\mathbb S}^2}w_i\rho^2 + \frac{1}{3}\partial_i(g({\mathcal R}(\Theta,\Theta_j)\Theta,\Theta_l))\rho^4 \\&\quad+ \cO_{p}(\rho^5) +  \rho^4 \cL_{p}^{(1)}(w) + \rho^2 \, \cQ_{p}^{(2)(2)}(w). \nonumber
\end{align}
Combining (\ref{eq:ExpgijInv}) and (\ref{eq:44}) we get:
\begin{align}\label{eq:47}
    \mathring{\Gamma}^k_{ij} &= \Gamma^k_{ij} + g_{{\mathbb S}^2}^{kl}(g_{ij}^{{\mathbb S}^2}w_l - g_{jl}^{{\mathbb S}^2}w_i - g_{il}^{{\mathbb S}^2}w_j) \nonumber\\&\quad+ \frac{1}{6}g_{{\mathbb S}^2}^{kl}(\partial_i(g({\mathcal R}(\Theta,\Theta_j)\Theta,\Theta_l)) + \partial_j(g({\mathcal R}(\Theta,\Theta_i)\Theta,\Theta_l)) - \partial_l(g({\mathcal R}(\Theta,\Theta_i)\Theta,\Theta_j)))\rho^2 \\&\quad- \frac{1}{6}g_{{\mathbb S}^2}^{kn}g_{{\mathbb S}^2}^{ml}g({\mathcal R}(\Theta,\Theta_n)\Theta,\Theta_m)(\partial_i(g_{jl}^{{\mathbb S}^2}) + \partial_j(g_{il}^{{\mathbb S}^2}) - \partial_l(g_{ij}^{{\mathbb S}^2}))\rho^2 \nonumber\\&\quad+ \cO_{p}(\rho^3) + \rho^{2} \cL_{p}^{(1)}(w)+ \cQ_{p}^{(2)(2)}(w),\nonumber
\end{align}
where $\Gamma^k_{ij}$ are the Christoffel symbols of $g_{{\mathbb S}^2}$. Substituting (\ref{eq:46}) and (\ref{eq:47}) into (\ref{eq:43}) gives:
\begin{align*}
    \Tilde{h}_{ij} &= g_{ij}^{{\mathbb S}^2}(1 - w)\rho + (\Hess_{{\mathbb S}^2}(w))_{ij}\rho \\ &\quad+ w_kg_{{\mathbb S}^2}^{kl}\Big(g_{jl}^{{\mathbb S}^2}w_i + g_{il}^{{\mathbb S}^2}w_j - g_{ij}^{{\mathbb S}^2}w_l\Big)\rho \\ &\quad+ \frac{2}{3}g({\mathcal R}(\Theta,\Theta_i)\Theta,\Theta_j)(1 - w)^3\rho^3 \\ &\quad+ \frac{1}{6}w_kg_{{\mathbb S}^2}^{kn}g_{{\mathbb S}^2}^{ml}g({\mathcal R}(\Theta,\Theta_n)\Theta,\Theta_m)\Big(\partial_ig_{jl}^{{\mathbb S}^2} + \partial_jg_{il}^{{\mathbb S}^2} - \partial_lg_{ij}^{{\mathbb S}^2}\Big)\rho^3 \\ &\quad- \frac{1}{6}w_kg_{{\mathbb S}^2}^{kl}\Big(\partial_ig({\mathcal R}(\Theta,\Theta_j)\Theta,\Theta_l) + \partial_jg({\mathcal R}(\Theta,\Theta_i)\Theta,\Theta_l) - \partial_lg({\mathcal R}(\Theta,\Theta_i)\Theta,\Theta_j)\Big)\rho^3 \\ &\quad+ \frac{5}{12}g(\nabla_{\Theta}{\mathcal R}(\Theta,\Theta_i)\Theta,\Theta_j)\rho^4 + \frac{3}{20}g(\nabla_{\Theta}^2{\mathcal R}(\Theta,\Theta_i)\Theta,\Theta_j)\rho^5 \\ &\quad+ \frac{2}{15}\delta^{\mu\nu}g({\mathcal R}(\Theta,\Theta_i)\Theta,E_{\mu})g({\mathcal R}(\Theta,\Theta_j)\Theta,E_{\nu})\rho^5 \nonumber\\
&\quad    + \cO_{p}(\rho^6) + \rho^4 \cL^{(1)}_{p}(w) + \rho \cQ^{(3)(2)}_{p}(w) +  \rho^3 \cQ_{p}^{(2)(1)}(w).
\end{align*}
Finally, to complete the proof,  note that:
\begin{align*}
    h_{ij} &:= -g(\nabla_{Z_i}N,Z_j) = -g(\nabla_{Z_i}g(\Tilde{N}, \Tilde{N})^{-\frac{1}{2}}\Tilde{N},Z_j) \\
  &  = - Z_i(g(\Tilde{N}, \Tilde{N})^{-\frac{1}{2}})g(\Tilde{N},Z_j) - g(\Tilde{N}, \Tilde{N})^{-\frac{1}{2}}g(\nabla_{Z_i}\Tilde{N},Z_j) \\&= g(\Tilde{N}, \Tilde{N})^{-\frac{1}{2}}\Tilde{h}_{ij},
\end{align*}
where we used that $\Tilde{N}$ is orthogonal to $Z_j$. Using \eqref{eq:48} we obtain:  
\begin{equation}
\begin{split}\label{eq:finalhAppendix}
   {h}_{ij} &= g_{ij}^{{\mathbb S}^2}(1 - w)\rho + (\Hess_{{\mathbb S}^2}(w))_{ij}\rho \\ &\quad+\frac{1}{2} g_{ij}^{{\mathbb S}^{2}}  g^{kl}_{{\mathbb S}^{2}} w_{k} w_{l} \rho + w_kg_{{\mathbb S}^2}^{kl}\Big(g_{jl}^{{\mathbb S}^2}w_i + g_{il}^{{\mathbb S}^2}w_j - g_{ij}^{{\mathbb S}^2}w_l\Big)\rho \\ &\quad+ \frac{2}{3}g({\mathcal R}(\Theta,\Theta_i)\Theta,\Theta_j)(1 - w)^3\rho^3 \\ &\quad+ \frac{1}{6}w_kg_{{\mathbb S}^2}^{kn}g_{{\mathbb S}^2}^{ml}g({\mathcal R}(\Theta,\Theta_n)\Theta,\Theta_m)\Big(\partial_ig_{jl}^{{\mathbb S}^2} + \partial_jg_{il}^{{\mathbb S}^2} - \partial_lg_{ij}^{{\mathbb S}^2}\Big)\rho^3 \\ &\quad- \frac{1}{6}w_kg_{{\mathbb S}^2}^{kl}\Big(\partial_ig({\mathcal R}(\Theta,\Theta_j)\Theta,\Theta_l) + \partial_jg({\mathcal R}(\Theta,\Theta_i)\Theta,\Theta_l) - \partial_lg({\mathcal R}(\Theta,\Theta_i)\Theta,\Theta_j)\Big)\rho^3 \\ &\quad+ \frac{5}{12}g(\nabla_{\Theta}{\mathcal R}(\Theta,\Theta_i)\Theta,\Theta_j)\rho^4 + \frac{3}{20}g(\nabla_{\Theta}^2{\mathcal R}(\Theta,\Theta_i)\Theta,\Theta_j)\rho^5 \\ &\quad+ \frac{2}{15}\delta^{\mu\nu}g({\mathcal R}(\Theta,\Theta_i)\Theta,E_{\mu})g({\mathcal R}(\Theta,\Theta_j)\Theta,E_{\nu})\rho^5 \nonumber\\
&\quad    + \cO_{p}(\rho^6) + \rho^4 \cL^{(1)}_{p}(w) + \rho \cQ^{(3)(2)}_{p}(w) +  \rho^3 \cQ_{p}^{(2)(1)}(w).
\end{split}
\end{equation}

\bibliography{Bibliography}{}
\bibliographystyle{plain}

\end{document}